\patchcmd\Gread@eps{\@inputcheck#1 }{\@inputcheck"#1"\relax}{}{}
\newtheorem{theorem}{Theorem}[section]
\newtheorem{proposition}[theorem]{Proposition}
\newtheorem{observation}[theorem]{Observation}
\newtheorem{corollary}[theorem]{Corollary}
\newtheorem{lemma}[theorem]{Lemma}
\newtheorem{remark}[theorem]{Remark}
\newtheorem{example}[theorem]{Example}
\newtheorem{definition}[theorem]{Definition}
\newcommand{\qed}{\hfill $\square$\medskip}
\newcommand{\rank}{{\rm rank}}
\newcommand{\modo}{{\rm mod}}
\newcommand{\diam}{{\rm diam}}
\newcommand{\epn}{{\rm epn}}
\newcommand{\sd}{{\rm sp}}
\newcommand{\esd}{{\rm esp}}
\newcommand{\1}{ \vspace{0.1cm} }
\newcommand{\2}{ \vspace{0.2cm} }
\let\oldenumerate\enumerate
\renewcommand{\enumerate}{
  \oldenumerate
  \setlength{\itemsep}{1pt}
  \setlength{\parskip}{0pt}
  \setlength{\parsep}{0pt}
}
\begin{document}

\title{End Super Dominating Sets in Graphs}

\author{
Saieed Akbari$^{1}$
\and
Nima Ghanbari$^{2,}$\footnote{Corresponding author}
\and
Michael A. Henning$^{3}$
}

\date{\today}

\maketitle

\begin{center}
$^{1}$Department of Mathematical Sciences, Sharif University of Technology, Tehran, Iran\\
$^{2}$Department of Informatics, University of Bergen, P.O. Box 7803, 5020 Bergen, Norway\\
$^{3}$ Department of Mathematics and Applied Mathematics, University of Johannesburg, Auckland Park, 2006 South Africa.\\
\medskip
\medskip
{\tt $^1$s\_akbari@sharif.edu ~~  $^2$Nima.Ghanbari@uib.no ~~ $^3$mahenning@uj.ac.za}
\end{center}


\begin{abstract}
Let $G=(V,E)$ be a simple graph. A dominating set of $G$ is a subset $S\subseteq V$ such that every vertex not in $S$ is adjacent to at least one vertex in $S$. The cardinality of a smallest dominating set of $G$, denoted by $\gamma(G)$, is the domination number of $G$. Two vertices are neighbors if they are adjacent. A super dominating set is a dominating set $S$ with the additional property that every vertex in $V \setminus S$ has a neighbor in $S$ that is adjacent to no other vertex in $V \setminus S$. Moreover if every vertex in $V \setminus S$ has degree at least~$2$, then $S$ is an end super dominating set. The end super domination number is the minimum cardinality of an  end super dominating set. We give applications of end super dominating sets as main servers and temporary servers of networks. We determine the exact value of the end super domination number for specific classes of graphs, and we count the number of end super dominating sets in these graphs. Tight upper bounds on the end super domination number are established, where the graph is modified by vertex (edge) removal and contraction.
\end{abstract}

\noindent{\bf Keywords:} Domination number, end super dominating set, end super domination number,  networks, generalization

\medskip
\noindent{\bf AMS Subj.\ Class.:} 05C38, 05C69, 05C90

\newpage
\section{Introduction}

Let $G = (V,E)$ be a  graph with the vertex set $V$ and edge set $E$. Throughout this paper, we consider connected graphs without loops and directed edges.
For each vertex $v\in V$, the set $N(v)=\{u\in V \colon uv \in E\}$ refers to the \textit{open neighbourhood} of $v$ and the set $N[v]=N(v)\cup \{v\}$ refers to the \textit{closed neighbourhood} of $v$ in $G$. The \textit{degree} of~$v$, denoted by $\deg(v)$, is the cardinality of $N(v)$. If $V = \{v_1,v_2,\ldots,v_n\}$, the \textit{adjacency matrix} of $G$ is a square $n \times  n$ matrix $A$ such that its element $a_{ij}$ is one when there is an edge from vertex $v_i$ to vertex $v_j$, and zero when there is no edge. The \textit{rank} of a matrix is defined as  the maximum number of linearly independent row (column)  vectors in the matrix. A vertex $v$ is called \textit{universal vertex}, if $N[v]=V$. The maximum and minimum degrees of vertices of $G$ are denoted by $\Delta$ and $\delta$, respectively. For a set $S \subseteq V$, we define $\overline{S}= V \setminus S$. An \emph{isolated vertex} is a vertex of degree~$0$ in $G$, while a \emph{pendant vertex}, also called a \emph{leaf} in the literature, is a vertex of degree~$1$ in $G$. An \emph{isolate}-\emph{free} graph is a graph that contains no isolated vertex. For $k \ge 1$ an integer, we use the standard notation $[k] = \{1,\ldots,k\}$ and $[k]_0 = [k] \cup \{0\} = \{0,1,\ldots,k\}$.

A set $S \subseteq V$ is a  \textit{dominating set} if every vertex in $\overline{S}$ is adjacent to at least one vertex in $S$. The \textit{domination number} $\gamma(G)$ is the minimum cardinality of a dominating set in $G$. A dominating set with cardinality $\gamma(G)$ is called a \emph{$\gamma$-set} of $G$. Domination and its variants are very well studied in the literature. For recent books on domination in graphs, we refer the reader to~\cite{HaHeHe-20,HaHeHe-21,HaHeHe-22,HeYebook}.

A dominating set $S$ is called a \textit{super dominating set}, abbreviated SD-\emph{set}, of $G$ if for every vertex  $u \in \overline{S}$, there exists $v\in S$ such that $N(v)\cap \overline{S}=\{u\}$. The cardinality of a smallest super dominating set of $G$, denoted by $\gamma_{\sd}(G)$, is the \textit{super domination number} of $G$.  The concept of super domination in graphs was introduced and first studied by Lema\'nska, Swaminathan, Venkatakrishnan, and Zuazua~\cite{Lemans}, and studied further, for example, in~\cite{Dett,Nima,Nima1}. Motivated by the definition of a super dominating set and real life applications to networks, we define next an end super dominating set in a graph.

\begin{definition}\label{Def}
{\rm
A super dominating set $S$ is an \emph{end super dominating set}, abbreviated ESD-\emph{set}, of $G$, if the degree of every vertex in $\overline{S}$ is at least~$2$ in $G$, that is, for every vertex  $u\in \overline{S}$, we have $\deg(u) \ge 2$. The cardinality of a smallest end super dominating set of $G$ is the \emph{end super domination number} of $G$	and is denoted by $\gamma_{\esd}(G)$. A $\gamma_{\esd}$-\emph{set} of $G$ is an ESD-set of $G$ of minimum cardinality $\gamma_{\esd}(G)$.
}
\end{definition}

If $S$ is an ESD-set in a graph $G$, then we say that a vertex $u \in \overline{S}$ is \emph{end super dominated} by a vertex $v \in S$ if $N(v) \cap \overline{S}=\{u\}$.

In this paper, we study the end super domination in graphs. In Section~2, we present some applications of end super dominating sets to servers in a network. In Section~3, we determine the end super domination number of specific classes of graphs, and we obtain general bounds on the end super domination number. In Section~\ref{S:trees}, we characterize the infinite family of trees with samllest possible end super domination number. In Section~5, we present bounds on the number of edges of a graph with given end super domination number. In Section~6, we obtain sharp upper and lower bound for end super domination number of a graph by graph modifications, including edge removal, edge contraction, and vertex removal.  In Section~6, we study the relation between the rank of adjacency matrix of a graph and its end super domination  number. Finally, in Section 7 we compute the number of end super dominating sets of minimum cardinality for specific classes of graphs.

\section{Applications and motivation}

In this section, we focus on applications of end super dominating sets. We begin with the following example:

\begin{example}\label{example2}
	{\normalfont
We can consider an ESD-set $E$ as a set of main servers and temporary servers of a network, and $\overline{E}$ as a set of backup servers of this network.
All kinds of servers can be connected or not. A server is a main server if it is connected to one and only one backup server. A server from $E$ is a temporary server if it is connected to more than one backup server. By the definition of ESD-set, each backup server in $\overline{E}$ is connected to at least two main servers or is connected to at least one main server and is connected to at least another backup or temporary server and this temporary server is connected to another back up server and they keep data safe for each other.
}
\end{example}

Suppose that the number of main servers in Example~\ref{example2} is $n$. Then by our definition, we need at most $n$ backup servers and can manage our system more efficiently. For example, suppose that we have a network of the users and servers. First remove all users. Thereafter, determine the end super domination number of this network related to the minimal end super dominating set.  Now, if two backup servers $b_i$ and $b_j$ are connected, then we can replace them with a better backup server $b_k$ with less spending of money by connecting all the previous servers connected to one of the $b_i$ or $b_j$ to $b_k$. Also, if a backup server is too expensive and risky to keep, then we can conversely put at least two backup servers in replacement of the expensive one and connect them to each other and then decide what main server is better to connect to which one of the new backup servers.

Analogous to our motivation in Example~\ref{example2}, it is also possible to discuss applications of end super dominating sets to the location of fire stations of a city and backup forces, hospitals and backup ambulance stations, and many other situations and examples. To illustrate these cases we determine the end super domination number of the graph $G$ shown in Figure~\ref{non-end-super}, and discuss applications of end super dominating sets for this graph.

\begin{figure}
		\begin{center}
			\psscalebox{0.5 0.5}
{
\begin{pspicture}(0,-7.5685577)(13.197116,2.4885578)
\rput[bl](5.84,-1.9314423){$a_1$}
\rput[bl](5.6,-0.55144227){$a_2$}
\rput[bl](5.28,0.62855774){$a_3$}
\rput[bl](4.88,2.1085577){$a_4$}
\rput[bl](7.02,-1.9314423){$b_1$}
\rput[bl](7.96,-2.7114422){$c_1$}
\rput[bl](6.96,-3.4114423){$d_1$}
\rput[bl](5.78,-3.3714423){$e_1$}
\rput[bl](4.86,-2.6914423){$f_1$}
\rput[bl](7.26,-0.61144227){$b_2$}
\rput[bl](10.08,-2.2914422){$c_2$}
\rput[bl](7.22,-4.7914424){$d_2$}
\rput[bl](5.58,-4.8514423){$e_2$}
\rput[bl](2.78,-2.2514422){$f_2$}
\rput[bl](7.68,0.60855776){$b_3$}
\rput[bl](11.72,-2.3314424){$c_3$}
\rput[bl](7.56,-6.011442){$d_3$}
\rput[bl](5.2,-6.051442){$e_3$}
\rput[bl](1.08,-2.2914422){$f_3$}
\rput[bl](8.0,2.1485577){$b_4$}
\rput[bl](12.84,-2.3114424){$c_4$}
\rput[bl](4.86,-7.491442){$e_4$}
\rput[bl](7.92,-7.4514422){$d_4$}
\rput[bl](0.0,-2.3114424){$f_4$}
\psdots[linecolor=black, dotsize=0.4](5.8,-1.3714423)
\psdots[linecolor=black, dotsize=0.4](7.4,-1.3714423)
\psdots[linecolor=black, dotsize=0.4](8.6,-2.5714424)
\psdots[linecolor=black, dotsize=0.4](7.4,-3.7714422)
\psdots[linecolor=black, dotsize=0.4](5.8,-3.7714422)
\psdots[linecolor=black, dotsize=0.4](4.6,-2.5714424)
\psline[linecolor=black, linewidth=0.08](11.8,-2.5714424)(13.0,-2.5714424)(8.6,-2.5714424)(8.6,-2.5714424)
\psline[linecolor=black, linewidth=0.08](7.4,-1.3714423)(8.6,2.2285578)(8.6,2.2285578)
\psline[linecolor=black, linewidth=0.08](5.8,-1.3714423)(4.6,2.2285578)
\psline[linecolor=black, linewidth=0.08](1.4,-2.5714424)(0.2,-2.5714424)(0.2,-2.5714424)
\psline[linecolor=black, linewidth=0.08](5.8,-3.7714422)(4.6,-7.3714423)
\psline[linecolor=black, linewidth=0.08](7.4,-3.7714422)(8.6,-7.3714423)
\psdots[linecolor=black, dotsize=0.4](4.6,2.2285578)
\psdots[linecolor=black, dotsize=0.4](8.6,2.2285578)
\psdots[linecolor=black, dotsize=0.4](13.0,-2.5714424)
\psdots[linecolor=black, dotsize=0.4](8.6,-7.3714423)
\psdots[linecolor=black, dotsize=0.4](4.6,-7.3714423)
\psdots[linecolor=black, dotsize=0.4](0.2,-2.5714424)
\psdots[linecolor=black, fillstyle=solid, dotstyle=o, dotsize=0.4, fillcolor=white](5.4,-0.17144226)
\psdots[linecolor=black, fillstyle=solid, dotstyle=o, dotsize=0.4, fillcolor=white](7.8,-0.17144226)
\psdots[linecolor=black, fillstyle=solid, dotstyle=o, dotsize=0.4, fillcolor=white](5.4,-4.971442)
\psline[linecolor=black, linewidth=0.08](5.0,-6.171442)(8.2,-6.171442)(11.8,-2.5714424)(11.8,-2.5714424)(11.8,-2.5714424)
\psline[linecolor=black, linewidth=0.08](5.0,1.0285578)(8.2,1.0285578)(8.2,1.0285578)
\psline[linecolor=black, linewidth=0.08](4.6,-2.5714424)(1.0,-2.5714424)(1.0,-2.5714424)
\psline[linecolor=black, linewidth=0.08](4.6,-2.5714424)(5.8,-3.7714422)(7.4,-3.7714422)(8.6,-2.5714424)(7.4,-1.3714423)(5.8,-1.3714423)(4.6,-2.5714424)(4.6,-2.5714424)
\psline[linecolor=black, linewidth=0.08](10.2,-2.5714424)(7.8,-4.971442)(7.8,-4.971442)
\psdots[linecolor=black, dotstyle=o, dotsize=0.4, fillcolor=white](10.2,-2.5714424)
\psdots[linecolor=black, dotstyle=o, dotsize=0.4, fillcolor=white](11.8,-2.5714424)
\psdots[linecolor=black, dotstyle=o, dotsize=0.4, fillcolor=white](7.8,-4.971442)
\psdots[linecolor=black, dotstyle=o, dotsize=0.4, fillcolor=white](8.2,-6.171442)
\psdots[linecolor=black, dotstyle=o, dotsize=0.4, fillcolor=white](5.0,-6.171442)
\psdots[linecolor=black, dotstyle=o, dotsize=0.4, fillcolor=white](3.0,-2.5714424)
\psdots[linecolor=black, dotstyle=o, dotsize=0.4, fillcolor=white](1.4,-2.5714424)
\psdots[linecolor=black, dotstyle=o, dotsize=0.4, fillcolor=white](5.0,1.0285578)
\psdots[linecolor=black, dotstyle=o, dotsize=0.4, fillcolor=white](8.2,1.0285578)
\end{pspicture}
}
		\end{center}
		\caption{Graph $G$} \label{non-end-super}
	\end{figure}

\begin{example}\label{example-new}
{\normalfont
Consider  the graph $G$ in Figure \ref{non-end-super}. By the definition of an ESD-set, the six vertices $a_4$, $b_4$, $c_4$, $d_4$, $e_4$,  and $f_4$ belong to every ESD-set in $G$.  One can easily check that the set $S=\{a_1,b_1,c_1,d_1,e_1,f_1,a_4,b_4,c_4,d_4,e_4,f_4\}$ is an ESD-set of $G$ and $\gamma_{\esd}(G)=12$. Now, consider  Example \ref{example2}. If $G$ is our network, then the set of black vertices includes all main servers and the rest are backup servers.
}
\end{example}

Here we consider Figure \ref{non-end-super} and Example \ref{example-new}. Suppose that this is our network and we want to reduce the number of backup servers by the idea that if there is a path between two backup servers without passing a main server, then we use only one such server and connect it to the corresponding main servers and name them as powerful backup servers. Suppose that each main server can support at most 100 users. Then, one of the  modifications to this network is the same as we see in Figure \ref{non-end-super-modificiation}, where PBS is a powerful backup server, MS is a main server and $\lambda$U is $\lambda$ users connecting to a main server ($1\le \lambda\le 100$), respectively.

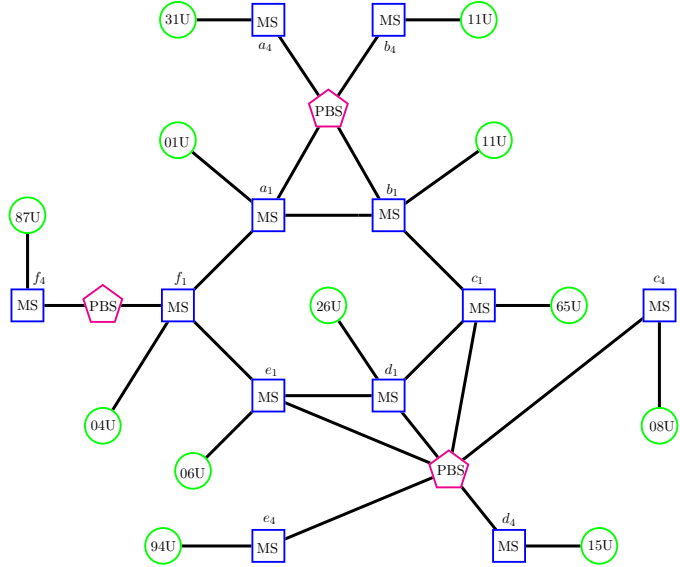
\begin{figure}[!h]
		\begin{center}
			\psscalebox{0.5 0.5}
{
\begin{pspicture}(0,-10.999999)(17.806944,4.006944)
\rput[bl](6.683472,-1.1365284){$a_1$}
\rput[bl](6.643472,2.6634717){$a_4$}
\rput[bl](10.043472,-1.1965283){$b_1$}
\rput[bl](12.3034725,-3.5365283){$c_1$}
\rput[bl](10.003472,-5.976528){$d_1$}
\rput[bl](6.8234725,-5.976528){$e_1$}
\rput[bl](4.4034724,-3.5565283){$f_1$}
\rput[bl](9.983472,2.6234717){$b_4$}
\rput[bl](17.143473,-3.5365283){$c_4$}
\rput[bl](6.763472,-9.916529){$e_4$}
\rput[bl](13.123472,-9.956529){$d_4$}
\rput[bl](0.6434723,-3.5365283){$f_4$}
\psline[linecolor=black, linewidth=0.08](6.9034724,-1.6965283)(9.3034725,-1.6965283)(9.3034725,-1.6965283)
\psline[linecolor=black, linewidth=0.08](9.3034725,-1.6965283)(10.103473,-1.6965283)(10.103473,-1.6965283)
\psline[linecolor=black, linewidth=0.08](6.9034724,-1.6965283)(4.9034724,-3.6965284)(4.5034723,-4.0965285)(6.9034724,-6.496528)(10.103473,-6.496528)(12.503472,-4.0965285)(10.103473,-1.6965283)(10.103473,-1.6965283)
\psline[linecolor=black, linewidth=0.08](6.9034724,-1.6965283)(8.503472,1.1034716)(10.103473,-1.6965283)(10.103473,-1.6965283)
\psline[linecolor=black, linewidth=0.08](6.9034724,-6.496528)(11.703472,-8.496529)(10.103473,-6.496528)(10.103473,-6.496528)
\psline[linecolor=black, linewidth=0.08](12.503472,-4.0965285)(11.703472,-8.496529)(11.703472,-8.496529)
\psline[linecolor=black, linewidth=0.08](4.5034723,-4.0965285)(2.5034723,-4.0965285)(2.5034723,-4.0965285)
\psline[linecolor=black, linewidth=0.08](8.503472,1.1034716)(6.9034724,3.5034716)(6.9034724,3.5034716)
\psline[linecolor=black, linewidth=0.08](8.503472,1.1034716)(10.103473,3.5034716)(10.103473,3.5034716)
\psline[linecolor=black, linewidth=0.08](17.303473,-4.0965285)(11.703472,-8.496529)(13.3034725,-10.496529)(13.3034725,-10.496529)
\psline[linecolor=black, linewidth=0.08](11.703472,-8.496529)(6.9034724,-10.496529)(6.9034724,-10.496529)
\psline[linecolor=black, linewidth=0.08](2.5034723,-4.0965285)(0.50347227,-4.0965285)(0.50347227,-4.0965285)
\psline[linecolor=black, linewidth=0.08](10.103473,3.5034716)(12.503472,3.5034716)(12.503472,3.5034716)
\psline[linecolor=black, linewidth=0.08](6.9034724,3.5034716)(4.5034723,3.5034716)(4.5034723,3.5034716)
\psline[linecolor=black, linewidth=0.08](0.50347227,-4.0965285)(0.50347227,-1.6965283)(0.50347227,-1.6965283)
\psline[linecolor=black, linewidth=0.08](17.303473,-4.0965285)(17.303473,-7.2965283)(17.303473,-7.2965283)
\psline[linecolor=black, linewidth=0.08](13.3034725,-10.496529)(15.703472,-10.496529)(15.703472,-10.496529)
\psline[linecolor=black, linewidth=0.08](6.9034724,-10.496529)(4.103472,-10.496529)(4.103472,-10.496529)
\psline[linecolor=black, linewidth=0.08](4.5034723,-4.0965285)(2.5034723,-7.2965283)(2.5034723,-7.2965283)
\psline[linecolor=black, linewidth=0.08](6.9034724,-1.6965283)(4.5034723,0.30347168)(4.5034723,0.30347168)
\psline[linecolor=black, linewidth=0.08](10.103473,-1.6965283)(12.903472,0.30347168)(12.903472,0.30347168)
\psline[linecolor=black, linewidth=0.08](12.503472,-4.0965285)(14.903472,-4.0965285)(14.903472,-4.0965285)
\psline[linecolor=black, linewidth=0.08](10.103473,-6.496528)(8.503472,-4.0965285)(8.503472,-4.0965285)
\psline[linecolor=black, linewidth=0.08](6.9034724,-6.496528)(4.9034724,-8.496529)(4.9034724,-8.496529)
\psdots[linecolor=green, dotstyle=o, dotsize=1.0, fillcolor=white](8.503472,-4.0965285)
\psdots[linecolor=green, dotstyle=o, dotsize=1.0, fillcolor=white](12.903472,0.30347168)
\psdots[linecolor=green, dotstyle=o, dotsize=1.0, fillcolor=white](12.503472,3.5034716)
\psdots[linecolor=green, dotstyle=o, dotsize=1.0, fillcolor=white](4.5034723,3.5034716)
\psdots[linecolor=green, dotstyle=o, dotsize=1.0, fillcolor=white](4.5034723,0.30347168)
\psdots[linecolor=green, dotstyle=o, dotsize=1.0, fillcolor=white](0.50347227,-1.6965283)
\psdots[linecolor=green, dotstyle=o, dotsize=1.0, fillcolor=white](2.5034723,-7.2965283)
\psdots[linecolor=green, dotstyle=o, dotsize=1.0, fillcolor=white](4.9034724,-8.496529)
\psdots[linecolor=green, dotstyle=o, dotsize=1.0, fillcolor=white](4.103472,-10.496529)
\psdots[linecolor=green, dotstyle=o, dotsize=1.0, fillcolor=white](15.703472,-10.496529)
\psdots[linecolor=green, dotstyle=o, dotsize=1.0, fillcolor=white](17.303473,-7.2965283)
\psdots[linecolor=green, dotstyle=o, dotsize=1.0, fillcolor=white](14.903472,-4.0965285)
\psdots[linecolor=magenta, dotstyle=pentagon, dotsize=1.0, fillcolor=white](8.503472,1.1034716)
\psdots[linecolor=magenta, dotstyle=pentagon, dotsize=1.0, fillcolor=white](11.703472,-8.496529)
\psdots[linecolor=magenta, dotstyle=pentagon, dotsize=1.0, fillcolor=white](2.5034723,-4.0965285)
\psdots[linecolor=blue, dotstyle=square, dotsize=1.0, fillcolor=white](6.9034724,3.5034716)
\psdots[linecolor=blue, dotstyle=square, dotsize=1.0, fillcolor=white](10.103473,3.5034716)
\psdots[linecolor=blue, dotstyle=square, dotsize=1.0, fillcolor=white](10.103473,-1.6965283)
\psdots[linecolor=blue, dotstyle=square, dotsize=1.0, fillcolor=white](6.9034724,-1.6965283)
\psdots[linecolor=blue, dotstyle=square, dotsize=1.0, fillcolor=white](4.5034723,-4.0965285)
\psdots[linecolor=blue, dotstyle=square, dotsize=1.0, fillcolor=white](6.9034724,-6.496528)
\psdots[linecolor=blue, dotstyle=square, dotsize=1.0, fillcolor=white](10.103473,-6.496528)
\psdots[linecolor=blue, dotstyle=square, dotsize=1.0, fillcolor=white](12.503472,-4.0965285)
\psdots[linecolor=blue, dotstyle=square, dotsize=1.0, fillcolor=white](6.9034724,-10.496529)
\psdots[linecolor=blue, dotstyle=square, dotsize=1.0, fillcolor=white](17.303473,-4.0965285)
\psdots[linecolor=blue, dotstyle=square, dotsize=1.0, fillcolor=white](0.50347227,-4.0965285)
\psdots[linecolor=blue, dotstyle=square, dotsize=1.0, fillcolor=white](13.3034725,-10.496529)
\rput[bl](11.383472,-8.6165285){PBS}
\rput[bl](2.1434722,-4.2565284){PBS}
\rput[bl](8.143473,0.94347167){PBS}
\rput[bl](6.603472,3.3034716){MS}
\rput[bl](9.863472,3.3634717){MS}
\rput[bl](6.603472,-1.8765283){MS}
\rput[bl](9.8434725,-1.7965283){MS}
\rput[bl](12.243472,-4.2965283){MS}
\rput[bl](9.8034725,-6.6765285){MS}
\rput[bl](6.623472,-6.6565285){MS}
\rput[bl](4.223472,-4.2765284){MS}
\rput[bl](6.603472,-10.676528){MS}
\rput[bl](13.003472,-10.636528){MS}
\rput[bl](17.023472,-4.2565284){MS}
\rput[bl](0.2234723,-4.2165284){MS}
\rput[bl](8.183473,-4.2565284){26U}
\rput[bl](12.203472,3.4034717){11U}
\rput[bl](12.583472,0.14347167){11U}
\rput[bl](14.563473,-4.2365284){65U}
\rput[bl](4.143472,0.12347168){01U}
\rput[bl](4.163472,3.4034717){31U}
\rput[bl](0.18347229,-1.8765283){87U}
\rput[bl](2.1834724,-7.416528){04U}
\rput[bl](4.5634723,-8.696528){06U}
\rput[bl](3.7834723,-10.636528){94U}
\rput[bl](17.043472,-7.436528){08U}
\rput[bl](15.403472,-10.6165285){15U}
\end{pspicture}
}
		\end{center}
		\caption{A modification of graph $G$ in Figure \ref{non-end-super} to a network} \label{non-end-super-modificiation}
	\end{figure}

\section{Special classes of graphs and general bounds}

In this section, we present some results on ESD-sets of graphs. A SD-set is not necessarily an ESD-set, since there might be a vertex with degree~$1$ in the complement of our SD-set. As Definition~\ref{Def}, if $G$ is a graph and $S$ is a SD-set of $G$, then $S$ is an ESD-set of $G$, if for all $u\in \overline{S}$, $\deg(u)\ge 2$. Also, we have:

\begin{proposition}\label{prop-2}
If $G$ is a graph, then $\gamma(G)\le \gamma_{\sd}(G)\le \gamma_{\esd}(G)$.
\end{proposition}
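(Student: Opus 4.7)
The plan is to observe that both inequalities follow directly from the nested structure of the three definitions, so the proof reduces to unpacking Definitions and invoking containments of the families of sets under consideration. No clever construction is required; the only substantive point to confirm is that an ESD-set is guaranteed to exist so that $\gamma_{\esd}(G)$ is well defined.

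First I would handle the inequality $\gamma(G) \le \gamma_{\sd}(G)$. By definition, a super dominating set of $G$ is, in particular, a dominating set of $G$. Hence every SD-set is a dominating set, so the collection of SD-sets is contained in the collection of dominating sets. Taking minimum cardinalities reverses nothing: the minimum over the smaller family is at least the minimum over the larger family, giving $\gamma(G) \le \gamma_{\sd}(G)$. In particular, a $\gamma_{\sd}$-set of $G$ provides an explicit dominating set of size $\gamma_{\sd}(G)$.

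Next I would handle $\gamma_{\sd}(G) \le \gamma_{\esd}(G)$. By Definition~\ref{Def}, an ESD-set of $G$ is a super dominating set of $G$ (with the extra requirement that every vertex in $\overline{S}$ has degree at least $2$). Thus every ESD-set is an SD-set, and the same minimum-over-a-subfamily argument yields $\gamma_{\sd}(G) \le \gamma_{\esd}(G)$. To be rigorous, I should first note that at least one ESD-set exists so that $\gamma_{\esd}(G)$ is meaningful: the set $S = V(G)$ satisfies $\overline{S} = \emptyset$, so the super-domination condition and the minimum-degree condition on $\overline{S}$ both hold vacuously, making $V(G)$ a (trivial) ESD-set.

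The main and only potential obstacle is this existence check for $\gamma_{\esd}(G)$, which is genuinely needed because the definition imposes a minimum-degree condition on $\overline{S}$ that could in principle exclude all proper subsets in a pathological graph; but as noted, $S = V(G)$ always works. Chaining the two inequalities then gives $\gamma(G) \le \gamma_{\sd}(G) \le \gamma_{\esd}(G)$, as required.
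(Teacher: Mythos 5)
Your proof is correct and follows the same (essentially definitional) reasoning the paper relies on, since the paper states this proposition without proof precisely because every ESD-set is an SD-set and every SD-set is a dominating set. The added remark that $S=V(G)$ is vacuously an ESD-set, so $\gamma_{\esd}(G)$ is well defined, is a sound and harmless extra check.
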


Before we continue, we state a known result on the super domination number of a graph.

\begin{theorem}{\rm (\cite{Lemans})}
\label{thm-1}
If $G$ is connected graph of order $n \ge 2$, then the following properties hold. \\ [-24pt]
\begin{enumerate}
\item[{\rm (a)}] $1 \le \gamma(G) \le \frac{n}{2} \le \gamma_{\sd}(G) \le n-1$. \1
\item[{\rm (b)}] $\gamma(G) + \gamma_{\sd}(G) \le n$.
\end{enumerate}
\end{theorem}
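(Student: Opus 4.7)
Part~(a) decomposes into four inequalities that I will handle separately. The bounds $\gamma(G) \ge 1$ and $\gamma_{\sd}(G) \le n-1$ are essentially immediate: a dominating set is nonempty, and for any $v \in V(G)$ the set $S = V(G) \setminus \{v\}$ is an SD-set because connectivity supplies a neighbor $u \in S$ of $v$, which trivially satisfies $N(u) \cap \overline{S} = \{v\}$. The bound $\gamma(G) \le n/2$ is Ore's classical theorem for graphs without isolated vertices, which applies here because connectivity with $n \ge 2$ forces $\delta(G) \ge 1$.

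The substantive inequality in~(a) is $\gamma_{\sd}(G) \ge n/2$. For this I will construct an injection $\phi \colon \overline{S} \to S$ for an arbitrary SD-set $S$: for each $u \in \overline{S}$, choose $\phi(u) \in S$ with $N(\phi(u)) \cap \overline{S} = \{u\}$, which exists by the super-domination property. Two distinct $u_1, u_2 \in \overline{S}$ cannot share a witness $v$, for then $\{u_1, u_2\} \subseteq N(v) \cap \overline{S}$ would contradict the singleton condition. So $\phi$ is injective, giving $n - |S| = |\overline{S}| \le |S|$, and hence $|S| \ge n/2$.

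For part~(b), my plan is to extract a dominating set of cardinality at most $n - |S|$ from a minimum SD-set $S$. Using the injection $\phi$ constructed above, set $T := \phi(\overline{S}) \subseteq S$, so $|T| = |\overline{S}| = n - |S|$. Every $u \in \overline{S}$ is adjacent to $\phi(u) \in T$, so $T$ dominates $\overline{S}$, and trivially $T$ dominates itself. It remains to show that $T$ (possibly after a local modification preserving its size) dominates every vertex of $S \setminus T$, which would yield $\gamma(G) \le |T| = n - \gamma_{\sd}(G)$ and hence (b).

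The main obstacle is ruling out that some $v \in S \setminus T$ has no neighbor in $T$. I plan to attack this by choosing the pair $(S, \phi)$ among all minimum SD-sets and compatible injections so as to maximize $|N[T]|$. If under this optimal choice some $v \in S \setminus T$ still has no neighbor in $T$, I will exhibit a local swap: replace $v$ in $S$ by a suitably chosen neighbor $u \in \overline{S}$ of $v$ (such a neighbor exists by connectivity, since $v$ is already isolated from $T$). A careful verification---checking that the swapped set $S' = (S \setminus \{v\}) \cup \{u\}$ remains an SD-set of the same minimum cardinality, and that the new image $T' = \phi'(\overline{S'})$ strictly increases $|N[T]|$---will contradict the maximality of the chosen pair. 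The delicate point in this verification is producing a super-dominator in $S'$ for the newly externalized vertex $v$; this should follow from the hypothesis that $v$ has no neighbor in $T$ (so the original witnesses $\phi(u)$ for $u \in \overline{S} \setminus \{u\}$ remain intact) together with the choice of $u$ as a neighbor of $v$. Once every $v \in S \setminus T$ is forced to have a neighbor in $T$, the set $T$ itself is a dominating set of the required size and (b) follows.
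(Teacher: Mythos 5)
Part (a) of your proposal is correct and complete: the witness map $u\mapsto\phi(u)$ is injective exactly as you argue, which gives $\gamma_{\sd}(G)\ge n/2$, and the bounds $\gamma(G)\ge 1$, $\gamma(G)\le n/2$ (Ore, since $\delta(G)\ge 1$) and $\gamma_{\sd}(G)\le V\setminus\{v\}$-type bound $\gamma_{\sd}(G)\le n-1$ are handled properly. Note that the paper itself states this theorem without proof, citing Lema\'nska et al., so the comparison below is with the standard argument that the paper does reproduce elsewhere (Lemma~\ref{lem-BC} and Corollary~\ref{cor-BC}).

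Part (b), however, is only a plan, and its deferred steps are genuine gaps. First, when $v\in S\setminus T$ has no neighbour in $T$, you claim connectivity supplies a neighbour $u\in\overline{S}$ of $v$; this is a non sequitur, since all neighbours of $v$ could lie in $S\setminus T$, and nothing in your setup excludes that. Second, even when such a $u$ exists, the swapped set $S'=(S\setminus\{v\})\cup\{u\}$ need not be a super dominating set: the old witnesses $\phi(w)$ for $w\in\overline{S}\setminus\{u\}$ do survive (they lie in $T$, hence are non-adjacent to $v$), but you never exhibit a witness for the newly externalized vertex $v$. The natural candidate $u$ fails whenever $u$ has another neighbour inside $\overline{S}$, which is perfectly possible, and no other neighbour of $v$ is forced to work. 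Third, the potential argument is unverified and doubtful: the new image is $T'=(T\setminus\{\phi(u)\})\cup\{z\}$, which gains $v$ in its closed neighbourhood but may lose vertices covered only by $\phi(u)$, so $|N[T']|>|N[T]|$ is not established and the extremal-choice contradiction does not yet exist. A much shorter route, and the one the paper effectively uses in Corollary~\ref{cor-BC}, runs in the opposite direction: by Lemma~\ref{lem-BC} (Bollob\'as--Cockayne), the connected graph $G$ has a $\gamma$-set $D$ in which every $x\in D$ has an external private neighbour $w_x$; then $V\setminus D$ is a super dominating set, because $w_x\in V\setminus D$ and $N(w_x)\cap D=\{x\}$ for each $x\in D$, giving $\gamma_{\sd}(G)\le n-\gamma(G)$, which is (b) (and, combined with $\gamma_{\sd}(G)\ge n/2$, re-derives $\gamma(G)\le n/2$ without Ore). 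I recommend replacing your exchange argument for (b) with this private-neighbour argument, or else fully proving the three missing steps above.
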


As a consequence of Definition~\ref{Def}, Proposition~\ref{prop-2} and Theorem~\ref{thm-1} we have the following remark.

\begin{remark}
\label{thm-mix}
{\normalfont
If $G$ is graph of order $n$, then the following properties hold. \\ [-22pt]
\begin{enumerate}
\item[{\rm (a)}] All vertices of degree at most~$1$ belong to every ESD-set in $G$. \1
\item[{\rm (b)}] If $\delta(G) \ge 2$, then $\gamma_{\esd}(G)=\gamma_{\sd}(G)$. \1
\item[{\rm (c)}] If every component of $G$ has order at least~$3$, then
\[
1 \le \gamma(G) \le \frac{n}{2} \le \gamma_{\sd}(G) \le \gamma_{\esd}(G) \le n-1.
\]
\end{enumerate}
}
\end{remark}




We next determine the end super domination number for certain graph classes. Lema\'nska et al.~\cite{Lemans} determined the super domination number of cycles, complete graphs, and complete bipartite graphs of order at least~$3$. Since these graphs have minimum degree at least~$2$, by Remark~\ref{thm-mix}(b), their end super domination number is equal to their super domination number. This yields the following result. Recall that $K_n$ denotes the complete graph of order~$n$, $K_{n,m}$ denotes the complete bipartite graph with partite sets of cardinalities~$n$ and~$m$, and $C_n$ denotes a cycle of order~$n$.

\begin{proposition}{\rm (\cite{Lemans})}
\label{thm-special}
The following hold. \\ [-22pt]
\begin{enumerate}
\item[{\rm (a)}] For $n \ge 3$, $\gamma_{\esd}(K_n)=n-1$. \1
\item[{\rm (b)}] For $\min\{n,m\}\ge 2$, $\gamma_{\esd}(K_{n,m})=n+m-2$. \1
\item[{\rm (c)}] For $n \ge 3$,
\begin{displaymath}
\gamma_{\esd}(C_n)= \left\{ \begin{array}{ll}
\lceil\frac{n}{2}\rceil & \textrm{if $n \equiv 0, 3 ~ (mod ~ 4)$, }\\
\\
\lceil\frac{n+1}{2}\rceil & \textrm{otherwise.}
\end{array} \right.
\end{displaymath}		
\end{enumerate}
\end{proposition}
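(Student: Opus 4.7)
The plan is to derive all three parts immediately from the already-known super domination values, by reducing to the super domination setting via Remark~\ref{thm-mix}(b). First I would verify that each graph class in question has minimum degree at least~$2$: namely $\delta(K_n) = n-1 \ge 2$ for $n \ge 3$, $\delta(K_{n,m}) = \min\{n,m\} \ge 2$ by hypothesis, and $\delta(C_n) = 2$ for $n \ge 3$. Remark~\ref{thm-mix}(b) then forces $\gamma_{\esd}(G) = \gamma_{\sd}(G)$ for each such graph, so the three formulae follow verbatim by quoting the corresponding super domination numbers established in~\cite{Lemans}.

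If one wished to prove the proposition from scratch, the matching lower bounds are the real content. For $K_n$, the observation is that whenever $|\overline{S}| \ge 2$, every vertex of $S$ has all of $\overline{S}$ inside its neighbourhood, so no $v \in S$ can satisfy $N(v) \cap \overline{S} = \{u\}$; hence $|\overline{S}| \le 1$ and $|S| \ge n-1$. In $K_{n,m}$ with partite sets $A$ and $B$, the same reasoning applied separately to each side yields $|\overline{S} \cap A| \le 1$ and $|\overline{S} \cap B| \le 1$, so $|\overline{S}| \le 2$ and $|S| \ge n+m-2$. The matching upper bounds are straightforward: any $(n-1)$-subset of $V(K_n)$ is an ESD-set, and taking all but one vertex from each partite set of $K_{n,m}$ produces an ESD-set of size $n+m-2$.

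The cycle case is the genuine obstacle. To bound $|\overline{S}|$ from above I would examine the maximal runs of consecutive vertices of $\overline{S}$ around $C_n$. Each such run has length at most~$2$, for otherwise its interior vertex would have both neighbours in $\overline{S}$, violating the super domination condition. A run of length~$2$ forces each of its two boundary vertices in $S$ to be used as the unique end super dominator of one endpoint of the run, which in turn limits how many length-$2$ runs can be packed. A careful cyclic accounting, split according to $n \pmod 4$, yields the stated formula, with the breakpoints $n \equiv 0, 3 \pmod 4$ corresponding exactly to those residues for which an optimal alternation of length-$2$ and length-$1$ runs closes up neatly around the cycle. Since this computation is already carried out in~\cite{Lemans} for $\gamma_{\sd}(C_n)$ and the transition to $\gamma_{\esd}$ is immediate from Remark~\ref{thm-mix}(b), no new argument is needed here.
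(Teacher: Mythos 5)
Your proposal is correct and follows the same route as the paper: verify $\delta \ge 2$ for $K_n$, $K_{n,m}$, and $C_n$, invoke Remark~\ref{thm-mix}(b) to get $\gamma_{\esd} = \gamma_{\sd}$, and quote the super domination values from~\cite{Lemans}. The extra from-scratch sketches for the complete and complete bipartite cases are sound but not needed, exactly as you note.
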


We note that if $G \in \{K_1,K_2\}$ has order~$n$, then $\gamma_{\esd}(G)=n$. If $G$ is a star graph $K_{1,n-1}$ of order~$n \ge 3$, then the set of $n-1$ pendant vertices (of degree~$1$) form the unique ESD-set of $G$ and $\gamma_{\esd}(G)=n-1$. We next determine the end super domination number of a path $P_n$ of order~$n$.

\begin{theorem}
\label{thm-path-esp}
For $n \in \mathbb{N} \setminus \{1\}$ and $k \in \mathbb{N} \cup \{0\}$, the following hold.
\[
 	\gamma_{\esd}(P_n)=\left\{
  	\begin{array}{ll}
  	{\displaystyle
  		2k}&
  		\quad\mbox{if $n=4k$, } \2 \\
  		{\displaystyle
  			2k+1}&
  			\quad\mbox{if $n=4k+1$,} \2 \\
  		{\displaystyle
			2k+2}&
  			\quad\mbox{if $n=4k+2$ or $n=4k+3$.}
  				  \end{array}
  					\right.	
\] 	
\end{theorem}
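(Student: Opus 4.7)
The plan is to produce matching upper and lower bounds. Label $V(P_n)=\{v_1,\ldots,v_n\}$ with edges $v_iv_{i+1}$, and encode an ESD-set $S$ by its characteristic string $s=s_1\cdots s_n\in\{0,1\}^n$, where $s_i=1$ iff $v_i\in S$.

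For the upper bound, for each residue class modulo $4$ I would exhibit an explicit $s$ built from copies of the block $1001$: take $s=(1001)^k$ if $n=4k$, $s=(1001)^k\cdot 1$ if $n=4k+1$, $s=(1001)^k\cdot 11$ if $n=4k+2$, and $s=(1001)^k\cdot 101$ if $n=4k+3$. In each case $s_1=s_n=1$, so both pendant vertices of $P_n$ lie in $S$ and every vertex of $\overline{S}$ has degree $2$ in $P_n$. A direct inspection verifies the super-domination condition at each $0$-position: an interior $0$ inside a $1001$-block is end-super-dominated by whichever of its two adjacent $1$s has its other neighbor in $S$ (or is the endpoint $v_1$ or $v_n$), and the three tail patterns $1$, $11$, $101$ are checked locally. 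Counting $1$s gives $|S|=2k,\,2k+1,\,2k+2,\,2k+2$, matching the claim.

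For the general lower bound, given any SD-set $S$ in any graph $G$ define $f\colon\overline{S}\to S$ by choosing for each $u\in\overline{S}$ a witness $f(u)\in S\cap N(u)$ with $N(f(u))\cap\overline{S}=\{u\}$. Since $u$ is recovered from $f(u)$ as the unique element of $N(f(u))\cap\overline{S}$, the map $f$ is injective, so $|\overline{S}|\le|S|$ and $|S|\ge\lceil n/2\rceil$. This already yields $\gamma_{\esd}(P_n)\ge 2k,\,2k+1,\,2k+2$ for $n=4k,\,4k+1,\,4k+3$ respectively, completing those cases.

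Only the case $n=4k+2$ requires more work, since the ceiling bound gives just $|S|\ge 2k+1$. Suppose for contradiction $|S|=2k+1$; then $|\overline{S}|=|S|$, so $f$ is a bijection, and in particular every $v\in S$ has exactly one neighbor in $\overline{S}$. I would prove by induction on $i$ that $s$ is forced to equal the canonical pattern $s_1=1$ and, for $i\ge 2$, $s_i=1$ iff $i\equiv 0,1\pmod 4$. The base case uses that the pendant $v_1\in S$ has its unique neighbor $v_2\in\overline{S}$. The delicate sub-case is ruling out $s_3=1$: if $v_3\in S$ then $v_4\in S$ as well (otherwise $v_3$ would have two $\overline{S}$-neighbors), after which both $v_1$ and $v_3$ super-dominate $v_2$; but $f$ is a function, so only one of them can equal $f(v_2)$, and the other is then outside the image of $f$ (since $v_2$ is its sole $\overline{S}$-candidate), contradicting bijectivity. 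Analogous short arguments on the other residues $i\bmod 4$ push the pattern rightward in four-step cycles. Evaluating the forced pattern at $i=n=4k+2\equiv 2\pmod 4$ gives $s_n=0$, contradicting $v_n\in S$; hence $|S|\ge 2k+2$. The main obstacle is precisely this propagation step: one must exploit bijectivity of $f$ at every vertex of $S$, not only at the endpoints, in order to fix the parity of the $1$-placements all along the path.
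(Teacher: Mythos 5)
Your proposal is correct, and while the upper-bound constructions coincide with the paper's (the $1001$-block patterns are exactly the sets used there), your lower-bound treatment of the hard case $n=4k+2$ takes a genuinely different route. The paper proceeds by induction on $n$: it fixes a $\gamma_{\esd}$-set $S$, notes $\{v_1,v_{4k+2}\}\subseteq S$, and then does a case analysis on $v_2,v_3$ (and symmetrically $v_{4k},v_{4k+1}$), deleting one, two or four vertices and applying the induction hypothesis to the shorter path $P_{4k+1}$ or $P_{4(k-1)+2}$, with base cases $2\le n\le 7$ checked by hand. You instead argue directly on a single path: from $|S|=2k+1=|\overline{S}|$ you get that the witness map $f\colon\overline{S}\to S$ is a bijection, hence every vertex of $S$ has exactly one neighbour in $\overline{S}$ and no two vertices of $S$ can have the same vertex as their unique $\overline{S}$-neighbour; starting from the leaf $v_1$ this forces the pattern $s_i=1$ iff $i\equiv 0,1\pmod 4$ all along the path (each forcing step only uses vertices of index at most the current one plus two, so nothing runs off the end), and evaluating at $i=4k+2\equiv 2\pmod 4$ contradicts $v_n\in S$. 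Your approach buys a self-contained, induction-free proof of the critical case (and also an inline proof of the $\lceil n/2\rceil$ bound that the paper imports from the super-domination literature), at the cost of having to write out carefully the "analogous" propagation steps you only sketch — in particular the step where domination and the witness condition for $v_{4j+3}\in\overline{S}$ force $v_{4j+4},v_{4j+5}\in S$; the paper's induction, by contrast, recycles Theorem~\ref{thm-path-esp} itself on shorter paths and is later reused in exactly that form in the enumeration arguments of Section~7.
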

\begin{proof}
We proceed by induction on the order~$n \ge 2$ of a path $P_n$. It is a simple exercise to check that $\gamma_{\esd}(P_2)=2$, $\gamma_{\esd}(P_3)=2$, $\gamma_{\esd}(P_4)=2$, $\gamma_{\esd}(P_5)=3$, $\gamma_{\esd}(P_6)=4$ and $\gamma_{\esd}(P_7)=4$. Hence the assertion holds for $\gamma_{\esd}(P_n)$, where $2 \le n \le 7$. This establishes the base cases. Let $ n\ge 8$ and let $G$ be the path $v_1v_2 \ldots v_n$. We consider four cases.

\medskip
\emph{Case~1: $n=4k$, for some $k\in \mathbb{N} \setminus \{1\}$.} In this case, we let
\[
S = \bigcup_{i=0}^{k-1} \{v_{4i+1},v_{4i+4}\}.
\]
The set $S$ is a dominating set of $G$. Moreover since the ends $v_1$ and $v_{4k}$ of the path belong to $S$, every vertex in $\overline{S}$ has degree~$2$ in $G$. If $v \in \overline{S}$, then either $v = v_{4i+2}$ or $v = v_{4i+3}$ for some $i \in [k-1]_0$. If $v = v_{4i+2}$, then $N(v_{4i+1}) \cap \overline{S} = \{v_{4i+2}\}$, and so the vertex $v \in \overline{S}$ is end super dominated by the vertex $v_{4i+1} \in S$. If $v = v_{4i+3}$, then $N(v_{4i+4}) \cap \overline{S} = \{v_{4i+3}\}$, and so the vertex $v \in \overline{S}$ is end super dominated by the vertex $v_{4i+4} \in S$. Thus, $S$ is an ESD-set of $G$, and so $\gamma_{\esd}(G) \le |S| = 2k$. By Remark~\ref{thm-mix}, $\gamma_{\esd}(G) \ge \lceil \frac{n}{2} \rceil = 2k$. Consequently, $\gamma_{\esd}(G) = 2k$.

\medskip
\emph{Case~2: $n=4k+1$, for some $k\in \mathbb{N} \setminus \{1\}$.}  In this case, we let
\[
S = \left( \bigcup_{i=0}^{k-1} \{v_{4i+1},v_{4i+4}\} \right) \cup \{v_{4k+1}\}.
\]
Analogous arguments as in Case~1 show that $S$ is an ESD-set of $G$, and so $\gamma_{\esd}(G) \le |S| = 2k+1$. By Remark~\ref{thm-mix}, $\gamma_{\esd}(G) \ge \lceil \frac{n}{2} \rceil = 2k+1$. Consequently, $\gamma_{\esd}(G) = 2k+1$.

\medskip
\emph{Case~3: $n=4k+2$, for some $k\in \mathbb{N} \setminus \{1\}$.}  Let $S$ be a $\gamma_{\esd}$-set of $G$, and so $S$ is an ESD-set of $G$ and $|S| = \gamma_{\esd}(G)$. We show that $|S| \ge k+2$. By Remark \ref{thm-mix}, we note that $\{v_1,v_{4k+2}\} \subseteq S$. If $v_2 \in S$, then the set $S' = S \setminus \{v_1\}$ is an ESD-set of the path $G-v_1 = P_{4k+1}$, and so by induction we have $2k + 1 = \gamma_{\esd}(G-v_1) \le |S'| = |S| - 1$, and so $|S| \ge 2k+2$, as desired. Analogously, if $v_{4k+1} \in S$, then $|S| \ge 2k+2$. Hence we may assume that $\{v_2,v_{4k+1}\} \subseteq \overline{S}$.

Suppose that $v_3 \notin S$. This implies that $v_4 \in S$ and the vertex $v_3$ is end super dominated by the vertex $v_4$. This in turn implies that $v_5 \in S$. In this case, the set $S' = S \setminus \{v_1,v_4\}$ is an ESD-set of the path $G' = G\setminus\{v_1,v_2,v_3,v_4\} = P_{4(k-1)+2}$, and so by induction we have $2(k-1) + 2 = \gamma_{\esd}(G') \le |S'| = |S| - 2$, and so $|S| \ge 2k+2$, as desired. Analogously, if $v_{4k} \notin S$, then $|S| \ge 2k+2$. Hence, we may assume that $\{v_3,v_{4k}\} \subset S$. We now let $G' = G - \{v_1,v_2,v_{4k+1},v_{4k+2}\}$ and we let $S' = S \setminus \{v_1,v_{4k+2}\}$. Since $S$ is an ESD-set of $G$, the set $S'$ is an ESD-set of $G' = P_{4(k-1)+2}$. Thus, by induction, we have $2(k-1) + 2 = \gamma_{\esd}(G') \le |S'| = |S| - 2$, and so $|S| \ge 2k+2$, as desired. Thus in all cases, $|S| \ge 2k+2$, implying that $\gamma_{\esd}(G) = |S| \ge 2k+2$. However, the set
\[
S^* = \left( \bigcup_{i=0}^{k-1} \{v_{4i+1},v_{4i+4}\} \right) \cup \{v_{4k+1},v_{4k+2}\}
\]
is an ESD-set of $G$ using analogous arguments as in Case~1, and so $\gamma_{\esd}(G) \le |S^*| \ge 2k+2$. Consequently, $\gamma_{\esd}(G) = 2k+2$.

\medskip
\emph{Case~4: $n=4k+3$, for some $k\in \mathbb{N} \setminus \{1\}$.} In this case, we let
\[
S = \left( \bigcup_{i=0}^{k-1} \{v_{4i+1},v_{4i+4}\} \right) \cup \{v_{4k+1},v_{4k+3}\}.
\]
Analogous arguments as in Case~1 show that $S$ is an ESD-set of $G$, and so $\gamma_{\esd}(G) \le |S| = 2k+2$. By Remark~\ref{thm-mix}, $\gamma_{\esd}(G) \ge \lceil \frac{n}{2} \rceil = 2k+2$. Consequently, $\gamma_{\esd}(G) = 2k+2$.~\qed
\end{proof}

Recall that a \textit{universal vertex} in a graph $G$ is a vertex adjacent to every other vertex in the graph. A graph is \textit{ $F$-free} if it does not contain $F$ as an induced subgraph. Moreover, a graph is \textit{$(F_1,F_2)$-free} if it is $F_1$-free and $F_2$-free. The following lemma is almost certainly known, but we were unable to find a reference. For completeness, we therefore give the simple proof of this result.

\begin{lemma}
\label{lem-upper-all-universal}
If $G$ is a $(P_4,C_4)$-free connected graph of order $n \ge 3$, then $G$ has a universal vertex.
\end{lemma}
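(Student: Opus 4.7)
My plan is to pick a vertex $v$ of maximum degree in $G$ and prove that $v$ must be universal. Suppose for contradiction that it is not, so there exists some $x \in V(G) \setminus N[v]$. I would first argue that the distance from $v$ to $x$ in $G$ is exactly~$2$: since $x \notin N[v]$ it is at least~$2$, while any shortest $v$--$x$ path of length~$3$ or more would be an induced $P_4$ (being a shortest path, it has no chords), contradicting the hypothesis. Hence there exists a common neighbor $w$ of $v$ and $x$.

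The key step will be to show that every neighbor $y$ of $v$ other than $w$ is also a neighbor of $w$. I would argue by contradiction, assuming $yw \notin E(G)$, and then split on whether $y$ is adjacent to $x$. If $yx \notin E(G)$, then $\{y,v,w,x\}$ induces a $P_4$, the only remaining non-edge to verify being $vx$, which holds since $x \notin N[v]$. If instead $yx \in E(G)$, then $\{v,y,x,w\}$ induces a $C_4$, with non-edges $vx$ and $yw$. Either alternative violates the $(P_4,C_4)$-freeness of $G$, forcing $yw \in E(G)$ as required.

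Combining $N(v) \setminus \{w\} \subseteq N(w)$ with $v \in N(w)$ yields $N[v] \subseteq N[w]$; the inclusion is strict because $x \in N(w) \setminus N[v]$. Therefore $\deg(w) > \deg(v)$, contradicting the maximality of $\deg(v)$. Hence $v$ must be universal.

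The main obstacle I anticipate is the two-case analysis in the middle paragraph, which is the only place where both forbidden induced subgraphs are used simultaneously; the reduction to the distance-$2$ case and the final degree comparison are essentially bookkeeping.
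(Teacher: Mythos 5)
Your proof is correct, and it takes a genuinely different route from the paper's. The paper proceeds by induction on $n$: it deletes a leaf $v$ of a spanning tree, invokes the inductive hypothesis to get a universal vertex $u$ of $G-v$, and, when $uv \notin E(G)$, shows that the tree-neighbor $w$ of $v$ must be universal in $G$, the crux being that any vertex $x$ nonadjacent to $w$ would make $\{u,v,w,x\}$ induce $P_4$ or $C_4$ according as $xv$ is a non-edge or an edge. You argue directly, with no induction: you take $v$ of maximum degree, reduce a putative non-neighbor $x$ to distance exactly~$2$ (the first four vertices of a shortest path of length at least~$3$ form an induced $P_4$), and then use the same $P_4$/$C_4$ dichotomy on $\{y,v,w,x\}$ to conclude $N(v)\setminus\{w\} \subseteq N(w)$, hence $N[v] \subsetneq N[w]$ (strictness via $x \in N(w)\setminus N[v]$), contradicting maximality of $\deg(v)$; all of these steps check out, including the vacuous case $N(v)=\{w\}$. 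Both arguments pivot on the identical four-vertex case analysis, but yours is more self-contained (no induction, no spanning tree) and yields the slightly stronger local statement that every maximum-degree vertex of a connected $(P_4,C_4)$-free graph is universal, whereas the paper's induction matches the inductive style used elsewhere in the paper and only needs to locate some universal vertex near the deleted leaf.
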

\begin{proof}
We proceed by induction on $n$ to prove that if $G$ is a $(P_4,C_4)$-free connected graph of order~$n \ge 3$, then $G$ has a universal vertex. If $n=3$, then $G$ has a universal vertex, as desired. This establishes the base case. Let $n \ge 4$ and let $G$ be a $(P_4,C_4)$-free graph of order~$n$. Let $T$ be a spanning tree of $G$, and let $v$ be a leaf in $T$. Let $G' = G-v$, and so $G'$ is a $(P_4,C_4)$-free connected graph of order~$n' = n-1$. Applying the inductive hypothesis to $G'$, the graph $G'$ contains a universal vertex $u$. If $uv \in E(G)$, then $u$ is a universal vertex in $G$, as desired. Hence, we may assume that $uv \notin E(G)$. Let $w$ be the (unique) neighbor of $v$ in the tree $T$. Since $u$ is a universal vertex in $G'$, the vertex $w$ is a common neighbor of $u$ and $v$ in $G$. We show that $w$ is a universal vertex in $G$. Suppose, to the contrary, that there is a vertex $x$ not adjacent to $w$ in $G$. Necessarily, $x \notin \{u,v,w\}$. Let $R = \{u,v,w,x\}$. If $xv \notin E(G)$, then $G[R] = P_4$, while if $xv \in E(G)$, then $G[R] = C_4$. In both cases, we contradict the supposition that $G$ is $(P_4,C_4)$-free. Hence, $w$ is a universal vertex in $G$, as desired.~\qed
\end{proof}

By Remark \ref{thm-mix}, if $G$ is a connected graph of order~$n \ge 3$, then $\gamma_{\esd}(G) \le n-1$. We show next that if equality holds, then the graph $G$ necessarily contains a universal vertex.

\begin{theorem}
\label{thm-upper-all-universal}
If $G$ is connected graph of order $n \ge 3$ such that $\gamma_{\esd}(G)=n-1$, then $G$ has a universal vertex.
\end{theorem}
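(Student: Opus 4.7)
The plan is to reduce directly to Lemma~\ref{lem-upper-all-universal}: it suffices to prove that any connected graph $G$ of order $n\ge 3$ with $\gamma_{\esd}(G)=n-1$ is $(P_4,C_4)$-free, since the lemma then yields a universal vertex. I would argue the contrapositive in two short cases, in each case exhibiting an explicit ESD-set of cardinality $n-2$ obtained by removing a carefully chosen pair of vertices from an induced forbidden subgraph.

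First, suppose $G$ contains an induced $P_4$, say $v_1v_2v_3v_4$, and set $S=V(G)\setminus\{v_2,v_3\}$, so $|S|=n-2$. The key observation is that $v_1v_3\notin E(G)$ and $v_2v_4\notin E(G)$ (because the $P_4$ is induced), so $N(v_1)\cap\overline{S}=\{v_2\}$ and $N(v_4)\cap\overline{S}=\{v_3\}$; thus $v_2$ is end super dominated by $v_1$ and $v_3$ is end super dominated by $v_4$. Since $v_2$ has neighbours $v_1,v_3$ and $v_3$ has neighbours $v_2,v_4$, both vertices of $\overline{S}$ have degree at least~$2$ in $G$. Hence $S$ is an ESD-set with $|S|=n-2$, contradicting $\gamma_{\esd}(G)=n-1$.

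Next, suppose $G$ contains an induced $C_4$, say $v_1v_2v_3v_4v_1$, and this time set $S=V(G)\setminus\{v_1,v_2\}$. Because $v_2v_4\notin E(G)$ and $v_1v_3\notin E(G)$, we get $N(v_4)\cap\overline{S}=\{v_1\}$ and $N(v_3)\cap\overline{S}=\{v_2\}$, so $v_1$ is end super dominated by $v_4$ and $v_2$ is end super dominated by $v_3$. Both $v_1$ and $v_2$ have degree at least~$2$ in $G$ (using their two neighbours on the $C_4$), so again $S$ is an ESD-set of size $n-2$, a contradiction. Combining the two cases, $G$ must be $(P_4,C_4)$-free, and Lemma~\ref{lem-upper-all-universal} completes the proof.

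The only subtle point is choosing the right two-vertex deletion in each case; the rest is routine verification of the three conditions in Definition~\ref{Def}. I do not anticipate a serious obstacle: in both the $P_4$ and the $C_4$, the induced non-edges are precisely what force the ``unique private neighbour in $\overline{S}$'' condition to hold, and the deleted vertices have the required degree bound for free because they lie on an induced subgraph of minimum degree~$2$ within $\{v_1,v_2,v_3,v_4\}$.
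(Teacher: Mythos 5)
Your proposal is correct and follows essentially the same route as the paper: invoke Lemma~\ref{lem-upper-all-universal} to extract an induced $P_4$ or $C_4$, then delete two suitable vertices of that subgraph to produce an ESD-set of size $n-2$, contradicting $\gamma_{\esd}(G)=n-1$. The only difference is cosmetic — in the $C_4$ case you delete $\{v_1,v_2\}$ while the paper deletes $\{v_2,v_3\}$, which is the same choice up to relabelling.
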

\begin{proof}
Let $G$ be a connected graph of order $n \ge 3$ satisfying $\gamma_{\esd}(G)=n-1$. Suppose, to the contrary, that $G$ does not have a universal vertex. By Lemma~\ref{lem-upper-all-universal}, the graph $G$ contains $P_4$ or $C_4$ as an induced subgraph. Suppose that $F = P_4$ is an induced subgraph of $G$, where $F$ is the path $v_1v_2v_3v_4$. In this case, the set $V(G) \setminus \{v_2,v_3\}$ is an ESD-set of $G$, where the vertices $v_2$ and $v_3$ are end super dominated by the vertices $v_1$ and $v_4$, respectively. Hence, $\gamma_{\esd}(G) \le |V(G) \setminus \{v_2,v_3\}| = n - 2$, a contradiction. Therefore, $F = C_4$ is an induced subgraph of $G$, where $F$ is the cycle $v_1v_2v_3v_4v_1$. As before, the set $V(G) \setminus \{v_2,v_3\}$ is an ESD-set of $G$, and so $\gamma_{\esd}(G) \le n - 2$, a contradiction.~\qed
\end{proof}

For a set $S \subseteq V$ and a vertex $v \in S$ in a graph $G$, the \emph{$S$-external private neighborhood} of $v$, denoted $\epn(v,S)$, is the set of vertices in $\overline{S} = V \setminus S$ that are adjacent to~$v$ but to no other vertex in $S$, that is, if $w \in \epn(v,S)$, then $w \in V \setminus S$ and $N(w) \cap S = \{v\}$. A vertex in the set $\epn(v,S)$ is an \emph{$S$-external private neighbor} of $v$. In 1979 Bollob\'{a}s and Cockayne~\cite{BoCo-79} established the following property of minimum dominating sets in graphs.

\begin{lemma}{\rm (\cite{BoCo-79})}
\label{lem-BC}
Every isolate-free graph $G$ contains a $\gamma$-set $S$ such that $\epn(v,S) \ne \emptyset$ for every vertex $v \in S$.
\end{lemma}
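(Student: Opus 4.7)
The plan is to use an extremal argument. Among all $\gamma$-sets of $G$, I would choose $S$ so as to maximize the number of edges $|E(G[S])|$ in the subgraph induced by $S$, and then show that any such $S$ must satisfy $\epn(v,S)\neq \emptyset$ for every $v\in S$. Suppose for contradiction that some $v\in S$ has $\epn(v,S)=\emptyset$.

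First I would rule out the possibility that $v$ has a neighbor in $S$. If some $x\in N(v)\cap S$ existed, then $S\setminus\{v\}$ would still dominate $G$: the vertex $v$ itself would be dominated by $x$, every vertex of $N(v)\setminus S$ would admit a dominator in $S\setminus\{v\}$ (else it would lie in $\epn(v,S)$), and all other vertices of $V\setminus S$ are dominated as before. This would contradict $|S|=\gamma(G)$, so in fact $v$ must be isolated in $G[S]$ and $N(v)\subseteq V\setminus S$.

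Since $G$ is isolate-free, $v$ has some neighbor $u\in V\setminus S$, and because $\epn(v,S)=\emptyset$ the vertex $u$ must have another neighbor $w\in S\setminus\{v\}$. I then form the swapped set $S'=(S\setminus\{v\})\cup\{u\}$. A routine check shows $|S'|=|S|$ and that $S'$ dominates $G$: the vertex $v$ is dominated by $u\in S'$; any vertex of $V\setminus S$ previously dominated only by $v$ does not exist (since $\epn(v,S)=\emptyset$); and the remaining domination relations are inherited from $S$. Hence $S'$ is also a $\gamma$-set.

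Finally, I would compare edge counts. Because $v$ has no neighbor in $S$, we have $|E(G[S])|=|E(G[S\setminus\{v\}])|$, whereas $|E(G[S'])|=|E(G[S\setminus\{v\}])|+|N(u)\cap(S\setminus\{v\})|\ge |E(G[S])|+1$, since $w$ contributes to the latter term. This contradicts the maximality in the choice of $S$ and completes the proof. The main subtlety is selecting the right extremal invariant: minimizing $|E(G[S])|$ would push the swap in the wrong direction, while maximizing it makes the Bollob\'as--Cockayne-style exchange produce a strictly better $\gamma$-set.
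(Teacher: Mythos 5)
Your proof is correct. Note that the paper itself gives no proof of this lemma; it is quoted as a known result of Bollob\'as and Cockayne, so there is nothing internal to compare against. Your argument is the standard one for this fact: choose a $\gamma$-set $S$ maximizing $|E(G[S])|$, observe that a vertex $v\in S$ with $\epn(v,S)=\emptyset$ must be isolated in $G[S]$ (otherwise $S\setminus\{v\}$ would still dominate, contradicting $|S|=\gamma(G)$), use isolate-freeness to pick $u\in N(v)\setminus S$, note $u$ has a second neighbor $w\in S\setminus\{v\}$ because $\epn(v,S)=\emptyset$, and verify that $S'=(S\setminus\{v\})\cup\{u\}$ is again a $\gamma$-set with strictly more induced edges, since $v$ contributed no edges to $G[S]$ while $u$ contributes at least the edge $uw$. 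All three steps (minimality forcing $N(v)\cap S=\emptyset$, domination of $S'$, and the strict edge-count increase) check out, so the extremal choice of $S$ does the job, and your remark about choosing to maximize rather than minimize $|E(G[S])|$ correctly identifies the only delicate point.
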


Let $G$ be a graph of order~$n$ with $\delta(G) \ge 2$. If $S$ is a $\gamma$-set of $G$ satisfying the statement of Lemma~\ref{lem-BC}, then the set $\overline{S}$ is an ESD-set of $G$, implying that $\gamma_{\esd}(G) \le |\overline{S}| = n - |S| = n - \gamma(G)$. This yields the following consequence of Theorem~\ref{lem-BC}.

\begin{corollary}
\label{cor-BC}
If $G$ is a graph of order $n$ with $\delta(G) \ge 2$, then $\gamma_{\esd}(G) \le n - \gamma(G)$.
\end{corollary}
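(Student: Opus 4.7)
The plan is to carry out exactly the sketch suggested immediately before the corollary statement, making explicit that the flipped set $\overline{S}$ satisfies all three defining conditions of an ESD-set.

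First I would apply Lemma~\ref{lem-BC}: since $\delta(G) \ge 2$ the graph $G$ is in particular isolate-free, so there exists a $\gamma$-set $S$ of $G$ with $\epn(v,S) \ne \emptyset$ for every $v \in S$. For each $v \in S$ fix a vertex $u_v \in \epn(v,S)$; by definition $u_v \in \overline{S}$ and $N(u_v)\cap S = \{v\}$. Let $T = \overline{S}$; the goal is to show that $T$ is an ESD-set of $G$.

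Next I would verify the three conditions in turn. \emph{Dominating:} every vertex $v \in \overline{T}=S$ has the neighbor $u_v \in T$, so $T$ dominates $G$. \emph{Super dominating:} for each $v \in \overline{T}=S$, the vertex $u_v \in T$ satisfies $N(u_v) \cap \overline{T} = N(u_v)\cap S = \{v\}$, which is exactly the end super domination requirement applied with the roles reversed. \emph{Degree condition:} every vertex of $\overline{T}=S$ has degree at least $2$ because $\delta(G) \ge 2$ by hypothesis. Combining these, $T$ is an ESD-set of $G$, and therefore
\[
\gamma_{\esd}(G) \;\le\; |T| \;=\; n - |S| \;=\; n - \gamma(G).
\]

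There is essentially no serious obstacle here; the whole content is packaged in Lemma~\ref{lem-BC}. The only subtle point worth writing out carefully is the translation of ``$\epn(v,S)\ne\emptyset$ for all $v\in S$'' into the super domination condition for $T=\overline{S}$, namely that the external private neighbor $u_v$ of $v$ (with respect to $S$) is precisely a witness in $T$ whose only neighbor in $\overline{T}$ is $v$. The minimum degree assumption $\delta(G)\ge 2$ is used solely (and crucially) to ensure the end-condition on $\overline{T}$; without it, $T$ would still be a super dominating set but possibly not an ESD-set.
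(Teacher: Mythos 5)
Your proposal is correct and follows exactly the paper's argument: apply the Bollob\'as--Cockayne lemma (Lemma~\ref{lem-BC}) to obtain a $\gamma$-set $S$ whose every vertex has an external private neighbor, and observe that $\overline{S}$ is then an ESD-set since $\delta(G)\ge 2$ supplies the degree condition. Your write-up simply makes explicit the verification that the paper leaves as a one-line remark, so there is nothing to add.
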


We remark that strict inequality in Corollary~\ref{cor-BC} is possible. For example, for $k \ge 2$ if $G$ is obtained from $k$ vertex disjoint copies of $K_3$ by selecting one vertex from each triangle and identifying these $k$ selected vertices into one new vertex, then the resulting graph $G$ has order~$n = 2k+1$. Moreover, $\gamma(G) = 1$ and $\gamma_{\sd}(G) = k+1$, and so $\gamma_{\sd}(G) = k+1 < 2k = n - \gamma(G)$.

If $S$ is a $\gamma$-set of a graph $G$ of order~$n$ with maximum degree~$\Delta$, then every vertex of $S$ dominates itself and at most~$\Delta$ other vertices. Therefore at most $(\Delta + 1)|S|$ distinct vertices are dominated by the dominating set $S$, implying that $n \le (\Delta + 1)|S| = (\Delta + 1)\gamma(G)$. As a consequence of this elementary lower bound on the domination number, first observed by Walikar et al.~\cite{WaAcSa-79}, together with Corollary~\ref{cor-BC}, we have the following upper bound on the end super domination number of a graph in terms of its order and maximum degree.

\begin{corollary}
\label{trivial-bound}
If $G$ is a graph of order~$n$ with maximum degree~$\Delta$, then
\[
\gamma_{\esd}(G) \le \left( \frac{\Delta}{\Delta+1} \right) n.
\]
\end{corollary}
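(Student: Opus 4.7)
The plan is to chain together the two ingredients recalled in the paragraph preceding the statement. First I would invoke the classical lower bound of Walikar, Acharya and Sampathkumar, namely $\gamma(G) \ge n/(\Delta+1)$; this follows from a one-line counting argument noting that each vertex of a $\gamma$-set dominates itself and at most $\Delta$ other vertices, so $(\Delta+1)\gamma(G) \ge n$. Next I would apply Corollary~\ref{cor-BC}, which gives $\gamma_{\esd}(G) \le n - \gamma(G)$.

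Combining the two estimates yields
\[
\gamma_{\esd}(G) \;\le\; n - \gamma(G) \;\le\; n - \frac{n}{\Delta+1} \;=\; \frac{\Delta}{\Delta+1}\, n,
\]
which is exactly the desired bound. The proof is essentially a two-line combination, so there is no real technical obstacle here. The only subtlety worth flagging is that Corollary~\ref{cor-BC} carries the hypothesis $\delta(G)\ge 2$, which is implicit in the present statement since the corollary is being derived directly from it; for graphs containing pendant or isolated vertices, Remark~\ref{thm-mix}(a) forces every such low-degree vertex into every ESD-set, and a short separate argument (or an explicit restriction to isolate-free graphs with minimum degree at least~$2$) would be required to push the bound through in that regime.
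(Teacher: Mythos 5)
Your proof is exactly the paper's argument: combine the Walikar--Acharya--Sampathkumar counting bound $n \le (\Delta+1)\gamma(G)$ with Corollary~\ref{cor-BC} to get $\gamma_{\esd}(G) \le n - \gamma(G) \le \left(\frac{\Delta}{\Delta+1}\right)n$. Your closing caveat is also well taken: Corollary~\ref{cor-BC} carries the hypothesis $\delta(G) \ge 2$, which the paper's statement of this corollary silently inherits, so flagging it is a point in your favor rather than a gap.
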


We remark that the upper bound in Corollary~\ref{trivial-bound} is tight. The simplest example of a graph achieving equality in the bound is when $G = K_{\Delta + 1}$ where $\Delta \ge 2$ is an arbitrary integer. In this case, $G$ has order~$n = \Delta + 1$, maximum degree~$\Delta$, and $\gamma_{\esd}(G) = \Delta$, and so $\gamma_{\esd}(G) = \left( \frac{\Delta}{\Delta+1} \right) n$. As a special case of Corollary~\ref{trivial-bound}, for $k \ge 2$ if $G$ is a $k$-regular graph, then $\gamma_{\esd}(G) \le (\frac{k}{k+1})n$.

In 1956 Nordhaus and Gaddum~\cite{Nord} established sharp bounds on the sum and product of the chromatic numbers of a graph and its complement. In 1972 Jaeger and Payan~\cite{JaPa72} presented the first Nordhaus-Gaddum bounds involving the domination numbers of a graph and its complement. As an immediate consequence of Remark~\ref{thm-mix}, we have the following Nordhaus-Gaddum bounds involving end super domination. The upper bounds are achievable, for example, when $G = K_n$.

\begin{theorem}
\label{Nor-Gad}
If $G$ is a graph of order $n \ge 3$, then the following hold. \\ [-22pt]
\begin{enumerate}
\item[{\rm (a)}] $n \le \gamma_{\esd}(G) + \gamma_{\esd}(\overline{G}) \le 2n-1$. \1
\item[{\rm (b)}] $\frac{n^2}{4}\le  \gamma_{\esd}(G) \cdot \gamma_{\esd}(\overline{G}) \le n(n-1)$.
\end{enumerate}
\end{theorem}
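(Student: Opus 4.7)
The plan is to derive both parts from two auxiliary facts that follow cleanly from Remark~\ref{thm-mix}. Let $n \ge 3$. The first fact is that $\gamma_{\esd}(H) \ge n(H)/2$ for every graph $H$. The second is that at least one of the inequalities $\gamma_{\esd}(G) \le n-1$ or $\gamma_{\esd}(\overline{G}) \le n-1$ always holds. Once these are in hand, applying the first fact to both $G$ and $\overline{G}$ gives the lower bounds $n \le \gamma_{\esd}(G) + \gamma_{\esd}(\overline{G})$ and $n^2/4 \le \gamma_{\esd}(G) \cdot \gamma_{\esd}(\overline{G})$, while combining the second fact with the trivial bound $\gamma_{\esd}(H) \le n(H)$ (note that $V(H)$ is vacuously an ESD-set) yields the upper bounds $\gamma_{\esd}(G) + \gamma_{\esd}(\overline{G}) \le (n-1) + n = 2n-1$ and $\gamma_{\esd}(G)\cdot\gamma_{\esd}(\overline{G}) \le n(n-1)$.

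To prove the first fact, I would use the easy observation that $\gamma_{\esd}$ is additive over connected components (since both the super-domination condition and the minimum-degree condition on $\overline{S}$ are local to each component). For a component $H_i$ of order $n_i \ge 3$, Remark~\ref{thm-mix}(c) supplies $\gamma_{\esd}(H_i) \ge n_i/2$. For a component of order $n_i \in \{1,2\}$, every vertex has degree at most $1$ and therefore belongs to every ESD-set by Remark~\ref{thm-mix}(a), forcing $\gamma_{\esd}(H_i) = n_i \ge n_i/2$. Summing over components yields $\gamma_{\esd}(H) \ge n(H)/2$.

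To prove the second fact, I would split on whether $G$ has a component of order at most $2$. If every component of $G$ has order at least $3$, then Remark~\ref{thm-mix}(c) directly gives $\gamma_{\esd}(G) \le n-1$. Otherwise, $G$ contains either an isolated vertex $v$ (which is universal in $\overline{G}$, so $\overline{G}$ is connected) or a $K_2$-component $\{u,v\}$; in the latter case, taking any third vertex $w$ we have $uw, vw\in E(\overline{G})$, while $u$ and $v$ are each adjacent in $\overline{G}$ to every vertex except the other, whence $\overline{G}$ is connected. In both situations $\overline{G}$ is a single component of order $n \ge 3$, so Remark~\ref{thm-mix}(c) applied to $\overline{G}$ gives $\gamma_{\esd}(\overline{G}) \le n-1$. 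The main obstacle is precisely this case analysis: Remark~\ref{thm-mix}(c) fails whenever a graph has a $K_1$- or $K_2$-component, and one must pass to the complement to recover the $n-1$ bound. Tightness of the upper bounds is witnessed by $G = K_n$, which satisfies $\gamma_{\esd}(K_n) = n-1$ and $\gamma_{\esd}(\overline{K_n}) = n$.
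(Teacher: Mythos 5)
Your proof is correct and follows essentially the same route as the paper, which states the theorem as an immediate consequence of Remark~\ref{thm-mix} (lower bounds from $\gamma_{\esd} \ge \frac{n}{2}$, upper bounds from $\gamma_{\esd} \le n-1$ paired with the trivial bound $\gamma_{\esd} \le n$, with $K_n$ witnessing sharpness). Your explicit component-wise argument and the passage to $\overline{G}$ when $G$ has a $K_1$- or $K_2$-component merely fill in the case analysis the paper leaves implicit.
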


\section{Trees with small end super domination number}
\label{S:trees}

By Remark~\ref{thm-mix}, if $T$ is a tree of order~$n \ge 3$, then $\gamma_{\esd}(T) \ge \frac{1}{2}n$. In this section, we characterize the trees achieving equality in this lower bound. For this purpose, we define a \emph{$2$-coloring} of a tree $T$ as a coloring of the vertices of $T$, one color to each vertex, using the colors amber and blue. We denote by $A$ and $B$ the resulting sets of vertices in $T$ colored amber and blue, respectively.

\begin{definition}{\rm (The family ${\cal T}$)}
\label{fam-cT}
{\rm
Let ${\cal T}$ of the family of $2$-colored trees that can be obtained from a sequence $T_1, \ldots, T_k$, $k \ge 1$, of trees where $T_1$ (called the \emph{$2$-colored base tree}) is the path $P_4$ where the two leaves are colored blue and the two vertices of degree~$2$ are colored amber, and if $k \ge 2$, then the $2$-colored tree $T_{i+1}$ can be obtained from the $2$-colored tree $T_i$ by applying Operation ${\cal O}_1$ or Operation ${\cal O}_2$ defined below, for all $i \in [k-1]$.}

\begin{itemize}
\item[] \textbf{Operation ${\cal O}_1$.} Add a path $P_2$ to the tree $T_i$ where one vertex of the added path is colored amber and the other blue, and add an edge from the amber vertex in the added path $P_2$ to an amber vertex of $T_i$.
\item[] \textbf{Operation ${\cal O}_2$.} Add a path $P_4$ to the tree $T_i$ where the two leaves are colored blue and the two vertices of degree~$2$ are colored amber, and add an edge from a (blue) leaf of the added path $P_4$ to a blue vertex of $T_i$.
\end{itemize}
\end{definition}

To illustrate the definition, the trees $T_1, T_2, T_3$ shown in Figure~\ref{fig-tree} belong to the family~${\cal T}$, where $T_1$ is the $2$-colored base tree in ${\cal T}$, the tree $T_2$ is obtained from $T_1$ by applying Operation~${\cal O}_1$, and the tree $T_3$ is obtained from $T_2$ by applying Operation~${\cal O}_2$.

\begin{figure}[htb]
\begin{center}
\begin{tikzpicture}[scale=.85,style=thick,x=.85cm,y=.85cm]
\def\vr{2.5pt}
\path (0,1) coordinate (a1);
\path (1,1) coordinate (a2);
\path (2,1) coordinate (a3);
\path (3,1) coordinate (a4);
\path (6,1) coordinate (b1);
\path (7,1) coordinate (b2);
\path (8,1) coordinate (b3);
\path (9,1) coordinate (b4);
\path (7,0) coordinate (b5);
\path (8,0) coordinate (b6);
\path (12,1) coordinate (c1);
\path (13,1) coordinate (c2);
\path (14,1) coordinate (c3);
\path (15,1) coordinate (c4);
\path (13,0) coordinate (c5);
\path (14,0) coordinate (c6);
\path (16,1) coordinate (c7);
\path (17,1) coordinate (c8);
\path (17,0) coordinate (c9);
\path (16,0) coordinate (c10);
\draw (a1)--(a2)--(a3)--(a4);
\draw (b1)--(b2)--(b3)--(b4);
\draw (b6)--(b5)--(b2);
\draw (c1)--(c2)--(c3)--(c4);
\draw (c6)--(c5)--(c2);
\draw (c10)--(c9)--(c8)--(c7)--(c4);
\draw (a1) [fill=white] circle (\vr);
\draw (a2) [fill=white] circle (\vr);
\draw (a3) [fill=white] circle (\vr);
\draw (a4) [fill=white] circle (\vr);
\draw (b1) [fill=white] circle (\vr);
\draw (b2) [fill=white] circle (\vr);
\draw (b3) [fill=white] circle (\vr);
\draw (b4) [fill=white] circle (\vr);
\draw (b5) [fill=white] circle (\vr);
\draw (b6) [fill=white] circle (\vr);
\draw (c1) [fill=white] circle (\vr);
\draw (c2) [fill=white] circle (\vr);
\draw (c3) [fill=white] circle (\vr);
\draw (c4) [fill=white] circle (\vr);
\draw (c5) [fill=white] circle (\vr);
\draw (c6) [fill=white] circle (\vr);
\draw (c7) [fill=white] circle (\vr);
\draw (c8) [fill=white] circle (\vr);
\draw (c9) [fill=white] circle (\vr);
\draw (c10) [fill=white] circle (\vr);
\draw[anchor = south] (a1) node {{\small $B$}};
\draw[anchor = south] (a2) node {{\small $A$}};
\draw[anchor = south] (a3) node {{\small $A$}};
\draw[anchor = south] (a4) node {{\small $B$}};
\draw[anchor = south] (b1) node {{\small $B$}};
\draw[anchor = south] (b2) node {{\small $A$}};
\draw[anchor = south] (b3) node {{\small $A$}};
\draw[anchor = south] (b4) node {{\small $B$}};
\draw[anchor = north] (b5) node {{\small $A$}};
\draw[anchor = north] (b6) node {{\small $B$}};
\draw[anchor = south] (c1) node {{\small $B$}};
\draw[anchor = south] (c2) node {{\small $A$}};
\draw[anchor = south] (c3) node {{\small $A$}};
\draw[anchor = south] (c4) node {{\small $B$}};
\draw[anchor = north] (c5) node {{\small $A$}};
\draw[anchor = north] (c6) node {{\small $B$}};
\draw[anchor = south] (c7) node {{\small $B$}};
\draw[anchor = south] (c8) node {{\small $A$}};
\draw[anchor = north] (c9) node {{\small $A$}};
\draw[anchor = north] (c10) node {{\small $B$}};
\draw (1.5,-1.2) node {{\small (a) $T_1$}};
\draw (7.5,-1.2) node {{\small (b) $T_2$}};
\draw (15,-1.2) node {{\small (c) $T_3$}};
\end{tikzpicture}
\end{center}
\vskip -0.5 cm \caption{Trees $T_1, T_2, T_3$ in the family~${\cal T}$}
\label{fig-tree}
\end{figure}
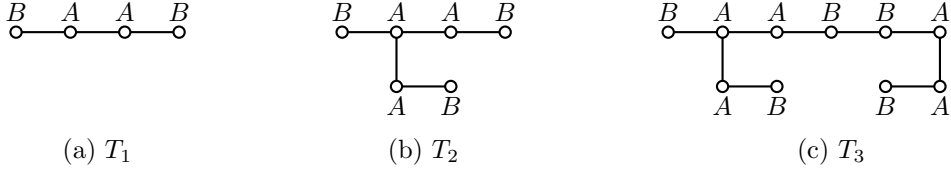

We shall need the following property of trees in the family~${\cal T}$.

\begin{observation}
\label{obser-treeF}
If $T$ is a tree in the family~${\cal T}$ of order~$n$, and $A$ and $B$ are the set of vertices in $T$ colored amber and blue, respectively, then $n \ge 4$ and the following properties hold. \\ [-22pt]
\begin{enumerate}
\item[{\rm (a)}] $|A| = |B| = \frac{1}{2}n$. \1
\item[{\rm (b)}] Every leaf of $T$ belongs to the set $B$.  \1
\item[{\rm (c)}] The set of edges between $A$ and $B$ induces a perfect matching in $T$, that is, every vertex in $A$ has exactly one neighbor in $B$ and every vertex in $B$ has exactly one neighbor in $A$. \1
\item[{\rm (d)}] The set $B$ is an ESD-set of $T$ of cardinality~$\frac{1}{2}n$. \1
\item[{\rm (e)}] The set $B$ is a $\gamma_{\esd}$-set of $T$.
\end{enumerate}
\end{observation}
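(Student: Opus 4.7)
The plan is to prove all five items simultaneously by induction on the number $k$ of trees in the sequence $T_1,\ldots,T_k$ used to construct $T$. Throughout the argument, (c) will act as the master invariant: once we know that the $A$-$B$ edges form a perfect matching, (a), (b), (d), and (e) will follow almost for free.

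For the base case $k=1$, I would simply inspect $T_1=P_4$ with coloring $B$-$A$-$A$-$B$: its order is $4$, the two color classes have size $2$, the two leaves are blue, and the edges incident to the two amber vertices give a perfect matching between $A$ and $B$; then $B=\{b_1,b_2\}$ dominates, each blue vertex super-dominates its unique amber neighbor, and both amber vertices have degree~$2$, so (a)--(d) hold, and (e) follows from Remark~\ref{thm-mix}(c) since $\gamma_{\esd}(T_1)\ge n/2=2=|B|$.

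For the inductive step, assume the statement holds for $T_i$ and consider $T_{i+1}$. In Operation~${\cal O}_1$, a $P_2=xy$ with $x$ amber and $y$ blue is attached by the edge $xa$ to an amber vertex $a\in A(T_i)$; the order grows by $2$, with one new amber and one new blue vertex, preserving (a); the only new leaf is $y\in B$, and $a$ was non-leaf (by the inductive (b)) so it still is non-leaf, which preserves (b); the only new cross-edge is $xy$, so the new vertex $x$ picks up its unique blue partner $y$ and vice versa, while $a$ keeps its (already unique) blue partner from $T_i$, preserving (c). In Operation~${\cal O}_2$, a $P_4=y_1x_1x_2y_2$ is attached by the edge $y_1b$ with $b\in B(T_i)$ blue; the order grows by $4$ with two new amber and two new blue vertices, so (a) is preserved; the only new leaf is $y_2\in B$, while $b$ (possibly previously a leaf) is now internal, preserving (b); the new cross-edges are exactly $y_1x_1$ and $x_2y_2$, and each of $y_1,x_1,x_2,y_2$ gets a unique opposite-color neighbor, while $b$ retains its unique amber partner from $T_i$, preserving (c). The bound $n\ge 4$ is clear since $|V(T_1)|=4$ and each operation only adds vertices.

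Given (a), (b), and (c), I would finish as follows. For (d), property (c) says every $v\in B$ has exactly one neighbor in $\overline{B}=A$; calling it $u$, we have $N(v)\cap\overline{B}=\{u\}$, so $u$ is end super dominated by $v$. Conversely every $u\in A$ has exactly one neighbor in $B$, and that neighbor end super dominates it. Property (b) guarantees $\deg(u)\ge 2$ for every $u\in A$, which combined with the above shows that $B$ is an ESD-set; by (a), $|B|=n/2$. For (e), Remark~\ref{thm-mix}(c) gives $\gamma_{\esd}(T)\ge n/2$, while (d) gives the matching upper bound, so $B$ is a $\gamma_{\esd}$-set.

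The main obstacle is purely bookkeeping: one has to verify for each operation that the added edges produce exactly the claimed new matching edges and no stray same-color or multi-color-neighbor pairs, and in particular that the vertex of $T_i$ to which the new piece is attached already has (in the ${\cal O}_1$ case) or does not yet need (in the ${\cal O}_2$ case) its matching partner. Once that verification is done cleanly for both operations, the rest is immediate.
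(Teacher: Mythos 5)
Your proof is correct: the paper states this result as an Observation and omits any proof, and your induction on the construction sequence $T_1,\ldots,T_k$, with property (c) as the invariant that yields (a), (b), (d) and then (e) via Remark~\ref{thm-mix}(c), is exactly the routine argument the authors leave implicit. The verification of both operations ${\cal O}_1$ and ${\cal O}_2$ is carried out accurately (in particular, that the attaching edge is monochromatic in each case, so the cross-edge matching is undisturbed), so nothing further is needed.
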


\begin{theorem}
\label{tree-char}
If $T$ is a tree of order~$n \ge 2$, then $\gamma_{\esd}(T) = \frac{1}{2}n$ if and only if there exists a $2$-coloring of $T$ such that $T \in {\cal T}$.
\end{theorem}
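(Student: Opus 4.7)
The plan is to handle both directions of the biconditional, with the forward direction being immediate from Observation~\ref{obser-treeF}: if $T \in {\cal T}$ with blue set $B$, then part~(e) gives $\gamma_{\esd}(T) = |B|$ and part~(a) gives $|B| = n/2$. For the converse, since $\gamma_{\esd}(T) = n$ whenever $n \le 2$, the equality $\gamma_{\esd}(T) = n/2$ already forces $n$ to be even and $n \ge 4$. My approach will be to extract a canonical $2$-coloring from any $\gamma_{\esd}$-set and then prove by strong induction that this coloring places $T$ in ${\cal T}$.

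\textbf{Canonical coloring.} Given a $\gamma_{\esd}$-set $S$, set $B = S$ and $A = V(T) \setminus S$. For each $u \in A$ I would fix an end super dominator $\phi(u) \in B$ satisfying $N(\phi(u)) \cap A = \{u\}$. A standard argument shows $\phi$ is injective (two amber vertices cannot share a private blue dominator), so $|A| = |B| = n/2$ and $\phi$ is a bijection. Amber vertices have degree at least~$2$, hence every leaf is blue; each $v = \phi(u) \in B$ has the unique amber neighbor~$u$, and each $u \in A$ has a unique blue neighbor (else two distinct blues would contradict bijectivity of $\phi$). Thus the $A$-$B$ edges form a perfect matching $M$. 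Call these three properties (P1)--(P3).

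\textbf{Inductive step via the contraction quotient.} I would then prove by strong induction on even $n \ge 4$ that every tree $T$ with a coloring satisfying (P1)--(P3) lies in ${\cal T}$ under that coloring. The base case $n = 4$ forces $T = P_4$ with the base coloring. For $n \ge 6$, the key idea is to pass to the \emph{contraction quotient} $T/M$, a tree on $n/2 \ge 3$ super-vertices obtained by contracting each matching edge. The external degree of super-vertex $(a,\ell)$ in $T/M$ is $(\deg_T(a)-1) + (\deg_T(\ell)-1)$; setting this equal to $1$ and using $\deg_T(a) \ge 2$ forces $\deg_T(a) = 2$ and $\deg_T(\ell) = 1$. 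So every leaf of $T/M$ corresponds to a pair (blue leaf $\ell$ of $T$, degree-$2$ amber neighbor $a$), with its unique outgoing edge being the amber-amber edge from $a$ to its other (necessarily amber) neighbor.

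\textbf{Reducing $T$ via ${\cal O}_1$ or ${\cal O}_2$.} The plan is to root $T/M$ arbitrarily and choose a leaf $L = (a,\ell)$ of $T/M$ at maximum depth, with parent super-vertex $X = (u,v)$. By maximality of depth, all children of $X$ in $T/M$ are leaves, and each child-edge is amber-amber. In \emph{Case~1}, $X$ has at least two amber-amber incident edges (either $\ge 2$ children, or a single child together with an amber-amber parent-edge); then $\deg_T(u) \ge 3$, and I would set $T' = T - \{a,\ell\}$ and verify that (P1)--(P3) still hold ($u$ does not become an amber leaf since $\deg_{T'}(u) \ge 2$). Induction then gives $T' \in {\cal T}$, and Operation~${\cal O}_1$ (attaching $a$-$\ell$ at $u$) recovers $T$. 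In \emph{Case~2}, $X$ has exactly one child and its parent-edge is blue-blue; then $\deg_T(u) = \deg_T(v) = 2$ and $\{\ell,a,u,v\}$ forms a pendant $P_4$ attached via $v$ to some blue vertex $\beta$. Setting $T' = T - \{\ell,a,u,v\}$, one verifies (P1)--(P3) ($\beta$ may become a blue leaf, which is fine), so $T' \in {\cal T}$ by induction and Operation~${\cal O}_2$ recovers $T$. The main obstacle will be ensuring this dichotomy always produces a valid reduction; the max-depth leaf argument handles it, together with a brief check at $n = 6$ where $T/M = P_3$ and direct inspection shows Case~1 always applies, so the reduction safely lands in the base case.
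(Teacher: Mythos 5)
Your proposal is correct, and it reaches the same two reductions as the paper (peel off a pendant amber--blue pair, undoing ${\cal O}_1$, or a pendant $P_4$, undoing ${\cal O}_2$), but it locates them by a genuinely different device. The paper works directly in $T$: it takes a diametral path $v_0v_1\ldots v_d$, derives the same matching structure of a $\gamma_{\esd}$-set that you call (P1)--(P3), and then does a degree case analysis, showing $\deg(v_1)=2$, splitting on $\deg(v_2)\ge 3$ versus $\deg(v_2)=2$, and in the latter case ruling out $\deg(v_3)\ge 3$ by a separate contradiction argument (an auxiliary child $u_2\in B$ whose matched amber $u_1$ would force an amber leaf $u_0$); its induction hypothesis is the numerical statement $\gamma_{\esd}(T')=\tfrac12 n'$, so it must also check that the restricted set is a minimum ESD-set of $T'$. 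You instead contract the matching $M$ and pick a deepest leaf of the quotient tree $T/M$; this makes the analogue of ``$\deg(v_3)=2$'' automatic (all child-edges of the parent super-vertex are amber--amber because the children are leaves of $T/M$, so its blue vertex can carry at most the one blue--blue parent-edge), and your induction runs on the purely structural conditions (P1)--(P3), so no lower-bound appeal to $\gamma_{\esd}$ of subtrees is needed; the price is the extra bookkeeping about what leaves of $T/M$ look like and the boundary check at $n=6$ so that the Case~2 reduction never drops below the base case, which you handle correctly. The forward direction via Observation~\ref{obser-treeF} and the extraction of the canonical coloring (injectivity of the end super dominator map, hence a perfect matching between $A$ and $B$) coincide with the paper's argument.
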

\begin{proof}
If $T \in {\cal T}$, then by Observation~\ref{obser-treeF}, $\gamma_{\esd}(T) = \frac{1}{2}n$. To prove the necessity, let $T$ be a tree of (even) order~$n$ with $\gamma_{\esd}(T) = \frac{1}{2}n$. We proceed by induction on $n \ge 2$ to show that $T \in {\cal T}$. If $n \in \{2,3\}$, then $\gamma_{\esd}(T) = 2 > \frac{1}{2}n$, a contradiction. If $n = 4$ and $T$ is a star $K_{1,3}$, then $\gamma_{\esd}(T) = 3 > \frac{1}{2}n$, a contradiction. Hence if $n = 4$, then $T = P_4$ and $T$ can be $2$-colored so that $T \in {\cal T}$. This establishes the base case. Let $n \ge 6$ be an even integer, and let $T$ be a tree of order~$n$ with $\gamma_{\esd}(T) = \frac{1}{2}n$.

Let $B$ be a $\gamma_{\esd}$-set of $T$, and so $|B| = \frac{1}{2}n$. We note that every leaf of $T$ belongs to the set $B$. Let $P \colon v_0v_1 \ldots v_d$ be a longest path in $T$. Necessarily, $d$ is the diameter of $T$, that is, $d = \diam(T)$. Further, both $v_0$ and $v_d$ are leaves in $T$ and therefore belong to the ESD-set $B$. Since $n \ge 6$ and $T$ is not a star, we note that $d \ge 3$. Let $A = V \setminus B$, and so $|A| = |B| = \frac{1}{2}n$. Every vertex in $A$ is end super dominated by a unique vertex of $B$ and the set of edges between $A$ and $B$ induces a perfect matching in $T$. Hence, every vertex in $B$ has exactly one neighbor in $A$ and every vertex in $A$ has exactly one neighbor in $B$. We now consider the $2$-coloring of the vertices of $T$ where every vertex in $B$ is colored blue and every vertex in $A$ is colored amber.

Suppose that $\deg_T(v_1) \ge 3$. In this case, let $u_0$ be a child of $v_1$ different from $v_0$. By the maximality of the path $P$, the vertex $u_0$ is a leaf. Since $v_0$ is the only vertex in $B$ that is adjacent to~$v_1$, the vertex $u_0 \in A$. But then $u_0$ has no neighbor in $B$, a contradiction. Hence, $\deg_T(v_1) = 2$.

Suppose that $\deg_T(v_2) \ge 3$. In this case, let $T'$ be the tree $T - \{v_0,v_1\}$ of order~$n' = n - 2$, and let $A'$ and $B'$ be the restriction of $A$ and $B$ to the tree $T'$, that is, $A' = V(T) \cap A$ and $B' = V(T) \cap B$. The set $B'$ is an $\gamma_{\esd}$-set of $T'$, and so $\gamma_{\esd}(T') = \frac{1}{2}n - 1 = \frac{1}{2}n'$. Applying the inductive hypothesis to the tree $T'$, we have $T' \in {\cal T}$. Thus, $T'$ can be obtained from a sequence $T_1,\ldots,T_k$ of trees for some $k \ge 1$ by applying operations~${\cal O}_1$ and~${\cal O}_2$. In this case, the tree $T$ can be obtained from the tree $T'$ by applying operation~${\cal O}_1$. Hence, $T \in {\cal T}$, as desired.

Hence, we may assume that $\deg_T(v_2) = 2$, for otherwise the desired result holds. By our properties of the sets $A$ and $B$, we note that $\{v_0,v_3\} \in B$ and $\{v_1,v_2\} \in A$. Further, the vertices $v_1$ and $v_2$ are end super dominated by the vertices $v_0$ and $v_3$, respectively. Suppose that $\deg_T(v_3) \ge 3$. In this case, let $u_2$ be a child of $v_3$ different from $v_2$. Since $v_2$ is the only neighbor of $v_3$ that belongs to the set $A$, we note that $u_2 \in B$. Let $u_1$ be the vertex in $A$ that is end super dominated by the vertex $u_2$. We note that $u_1$ is a child of $u_2$. Since every vertex in $A$ has degree at least~$2$ in $T$, the vertex $u_1$ has degree at least~$2$. Let $u_0$ be a child of $u_1$. By the maximality of the path $P$, the vertex $u_0$ is a leaf. Since $u_2$ is the only vertex in $B$ that is adjacent to $u_1$, the vertex $u_0 \in A$. But then $u_0$ has no neighbor in $B$, a contradiction. Hence, $\deg_T(v_3) = 2$.

We now let $T'$ be the tree $T - \{v_0,v_1,v_2,v_3\}$ of order~$n' = n - 4$, and let $A'$ and $B'$ be the restriction of $A$ and $B$ to the tree $T'$, that is, $A' = V(T) \cap A$ and $B' = V(T) \cap B$. The set $B'$ is an $\gamma_{\esd}$-set of $T'$, and so $\gamma_{\esd}(T') = \frac{1}{2}n - 2 = \frac{1}{2}n'$. Applying the inductive hypothesis to the tree $T'$, we have $T' \in {\cal T}$. Thus, $T'$ can be obtained from a sequence $T_1,\ldots,T_k$ of trees for some $k \ge 1$ by applying operations~${\cal O}_1$ and~${\cal O}_2$. In this case, the tree $T$ can be obtained from the tree $T'$ by applying operation~${\cal O}_2$. Hence, $T \in {\cal T}$, as desired.~\qed
\end{proof}

\section{End super domination and size}

In 1965 Vizing~\cite{Vi-65} proved a classical result bounding the size of a graph in terms of its order and domination number. In this section we bound the size of a graph in terms of its order and end super domination number, and characterize the extremal graphs achieving equality in this bound.

Two edges in a graph $G$ are \emph{independent} if they are not adjacent in $G$; that is, if they have no vertex in common. A set of pairwise independent edges of $G$ is a \emph{matching} in $G$, while a matching of maximum cardinality is a \emph{maximum matching}. The number of edges in a maximum matching of $G$ is the \emph{matching number} of $G$ which we denote by $\alpha'(G)$. If $M$ is a matching of $G$, a vertex is \emph{$M$-matched} if it is incident with an edge of $M$; otherwise, the vertex is \emph{$M$-unmatched}.

\begin{theorem}
\label{esdom-size}
If $G$ is a graph of order~$n$ and size~$m$ with end super domination number $\gamma_{\esd}(G) = \gamma_{\esd}$, then
\[
m \le \binom{n}{2} - (n-\gamma_{\esd})(n-\gamma_{\esd}-1). \1
\]
Moreover, equality holds if and only if $G$ is obtained from a complete graph $K_n$ by removing all edges from a complete bipartite subgraph $K_{n-\gamma_{\esd},n-\gamma_{\esd}}$ of $G$, except for the edges of a perfect matching between the partite sets of the bipartite subgraph.
\end{theorem}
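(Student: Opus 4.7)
The plan is to establish the upper bound by counting forced non-edges at the end super dominators, and then to extract the structural characterization directly from the tightness of this count. First I would fix a $\gamma_{\esd}$-set $S$ of $G$ and write $n' = n - \gamma_{\esd}$, so $|S| = \gamma_{\esd}$ and $|\overline{S}| = n'$. For each $u \in \overline{S}$, the ESD property yields a vertex $v_u \in S$ with $N(v_u) \cap \overline{S} = \{u\}$, and the resulting assignment $u \mapsto v_u$ is automatically injective, since any $v \in S$ with $|N(v) \cap \overline{S}| = 1$ privately dominates exactly one element of $\overline{S}$. Set $V' = \{v_u : u \in \overline{S}\} \subseteq S$, so $|V'| = n'$.

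The key counting step is that each $v_u \in V'$ has exactly $n' - 1$ non-neighbors in $\overline{S}$, namely the vertices of $\overline{S} \setminus \{u\}$, and these $|V'|(n' - 1) = n'(n' - 1)$ non-edges are pairwise distinct (they correspond to distinct pairs in $V' \times \overline{S}$). Hence $m \le \binom{n}{2} - n'(n' - 1)$, which is exactly the claimed bound.

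If equality holds, then the $n'(n' - 1)$ non-edges identified above must account for \emph{every} non-edge of $G$, which forces $G[S]$ and $G[\overline{S}]$ to be complete, every vertex of $S \setminus V'$ to be adjacent to every vertex of $\overline{S}$, and the edges between $V'$ and $\overline{S}$ to form exactly the perfect matching $\{v_u u : u \in \overline{S}\}$. Equivalently, $G$ arises from $K_n$ by deleting all edges of the bipartite subgraph on $V' \cup \overline{S}$ (isomorphic to $K_{n', n'}$) except for this perfect matching. Conversely, if $G$ is presented in this form with partite sets of size $n' = n - \gamma_{\esd}(G)$, a direct edge count gives $|E(G)| = \binom{n}{2} - n'(n' - 1) = \binom{n}{2} - (n - \gamma_{\esd})(n - \gamma_{\esd} - 1)$, so $m$ attains the bound.

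The main subtlety is combining the injectivity of $u \mapsto v_u$ with the distinctness of the $n'(n' - 1)$ forced non-edges at the dominators: together these ingredients yield the numeric bound, and a saturation argument in the equality case then pins down the bipartite-with-matching structure. The edge-counting converse is then immediate.
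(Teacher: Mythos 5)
Your proof is correct and takes essentially the same route as the paper's: both fix a $\gamma_{\esd}$-set, pick for each vertex of $\overline{S}$ its private end super dominator (the paper's matching $\{u_iw_i\}$, your injection $u\mapsto v_u$), count the $(n-\gamma_{\esd})(n-\gamma_{\esd}-1)$ forced non-edges between the dominators and $\overline{S}$, and in the equality case conclude these are all the non-edges, yielding the $K_n$ minus ($K_{n',n'}$ minus a perfect matching) structure. The converse is handled by the same direct edge count as in the paper.
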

\begin{proof}
Let $\gamma_{\esd}(G) = \gamma_{\esd}$, and let $S$ be a $\gamma_{\esd}$-set of $G$. Thus, $S$ is an ESD-set of $G$ of minimum cardinality~$\gamma_{\esd}(G)$. We note that $\frac{1}{2}n \le \gamma_{\esd}$, or, equivalently, $0 \le 2\gamma_{\esd} - n$. Let $\overline{S} = \{u_1, \ldots, u_{n - \gamma_{\esd}}\}$ and let $w_i$ be a vertex in $S$ that end super dominates the vertex $u_i$ for $i \in [n - \gamma_{\esd}]$. Hence, $w_i \in S$ and $N(w_i) \cap \overline{S} = \{u_i\}$ for $i \in [n - \gamma_{\esd}]$. Let
\[
M = \bigcup_{i=1}^{n - \gamma_{\esd}} \{u_iw_i\}.
\]

Thus, $M$ is a matching in $G$ of cardinality~$n - \gamma_{\esd}$. We note that $N_G(u_i) \cap W = \{w_i\}$ and $N_G(w_i) \cap U = \{u_i\}$ for all $i \in [n - \gamma_{\esd}]$. Hence, $u_iw_j \notin E(G)$ for all $i$ and $j$ where $1 \le i < j \le n - \gamma_{\esd}$. Let $U = \{u_1,\ldots,u_{n - \gamma_{\esd}}\}$ and let $W = \{w_1,\ldots,w_{n - \gamma_{\esd}}\}$, and so $|U| = |W| = n - \gamma_{\esd}$. Each vertex in $U$ is not adjacent in $G$ to $|W| - 1$ vertices in $W$, implying that
\[
m \le \binom{n}{2} - |U| \times (|W| - 1)
= \binom{n}{2} - (n-\gamma_{\esd})(n-\gamma_{\esd} - 1). \1
\]
This establishes the desired upper bound. Moreover, if $m = \binom{n}{2} - (n-\gamma_{\esd})(n-\gamma_{\esd}-1)$, then the only edges missing in $G$ are edges of the form $u_iw_j$ where $i$ and $j$ and $1 \le i < j \le n - \gamma_{\esd}$. Thus if $F = \{u_iw_j \colon 1 \le i < j \le n - \gamma_{\esd}\}$, then $G = K_n - F$, that is, $G$ is obtained from $K_n$ by deleting all edges in $F$. Conversely, if $G$ is obtained from a complete graph $K_n$ by removing all edges from a complete bipartite subgraph $K_{n-\gamma_{\esd},n-\gamma_{\esd}}$ of $G$, except for the edges of a perfect matching between the partite sets of the bipartite subgraph, then $\gamma_{\esd}(G) = \gamma_{\esd}$ and $m = \binom{n}{2} - (n-\gamma_{\esd})(n-\gamma_{\esd}-1)$.~\qed
\end{proof}

We establish next a lower bound on the size of a connected graph in terms of its order and end super domination number.

\begin{theorem}
\label{edge-lower1}
If $G$ is a connected graph of order~$n$ and size~$m$ with end super domination number $\gamma_{\esd}(G) = \gamma_{\esd}$, then
\[
m \ge 2(n-\gamma_{\esd}) - 1, \1
\]
with equality if and only if $G$ is a tree and $G \in {\cal T}$.
\end{theorem}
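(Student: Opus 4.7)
\medskip
\noindent\textbf{Proof sketch.} The key observation is that the stated bound $m \ge 2(n-\gamma_{\esd})-1$ can be derived from two much simpler inequalities that we already have in hand, namely $m \ge n-1$ (since $G$ is connected) and $\gamma_{\esd}(G) \ge \tfrac{n}{2}$ (from Remark~\ref{thm-mix}). My plan is to first dispose of the trivial small-order cases $n \in \{1,2\}$ directly, and then handle $n \ge 3$, where every component of $G$ has order at least~$3$ and Remark~\ref{thm-mix}(c) applies.

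For $n \ge 3$, I would chain the two inequalities. Remark~\ref{thm-mix}(c) gives $\gamma_{\esd} \ge \tfrac{n}{2}$, equivalently $n - \gamma_{\esd} \le \tfrac{n}{2}$, which rearranges to $2(n-\gamma_{\esd}) - 1 \le n - 1$. Since $G$ is connected, $m \ge n - 1$, and concatenating yields
\[
m \ge n - 1 \ge 2(n - \gamma_{\esd}) - 1,
\]
which is the desired bound. So the bound itself is essentially free.

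The equality analysis is where I would spend most of the effort, though it is still short. Equality throughout the chain above forces both $m = n - 1$ (so $G$ is a tree) and $\gamma_{\esd}(G) = \tfrac{n}{2}$. The first condition together with connectedness makes $G$ a tree, and then Theorem~\ref{tree-char} translates the condition $\gamma_{\esd}(G) = \tfrac{n}{2}$ into $G \in {\cal T}$. For the converse, suppose $G \in {\cal T}$. Then $G$ is (by construction) a tree, so $m = n - 1$; and by Observation~\ref{obser-treeF}(a),(e), we have $\gamma_{\esd}(G) = |B| = \tfrac{n}{2}$, so $2(n - \gamma_{\esd}) - 1 = n - 1 = m$.

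I do not anticipate a genuine obstacle here, since the hard work has already been done in Theorem~\ref{tree-char}. The only minor subtlety is ensuring that Remark~\ref{thm-mix}(c) is applicable; this requires $G$ to be connected of order at least~$3$, which is why I would separate out the $n \le 2$ cases at the start (and note that ${\cal T}$ contains no tree of order less than~$4$, so the equality characterization is vacuously correct there).
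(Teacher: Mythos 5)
Your proposal is correct and follows essentially the same route as the paper's proof: chain $m \ge n-1$ (connectivity) with $\gamma_{\esd} \ge \tfrac{1}{2}n$ (Remark~\ref{thm-mix}) to get the bound, then reduce the equality case to $m=n-1$ and $\gamma_{\esd}=\tfrac{1}{2}n$ and invoke Theorem~\ref{tree-char} (with Observation~\ref{obser-treeF} for the converse). Your explicit treatment of the orders $n\le 2$ is a minor tidiness the paper glosses over, but it does not change the argument.
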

\begin{proof}
Let $\gamma_{\esd}(G) = \gamma_{\esd}$. By Remark~\ref{thm-mix}, we have $\gamma_{\esd} \ge \frac{1}{2}n$. Since $G$ is a connected graph of order~$n$ and size~$m$, we have $m \ge n-1$. Hence,
\[
m \ge n - 1 = 2\left(n - \frac{1}{2}n\right) - 1 \ge 2(n-\gamma_{\esd}) - 1.
\]

Furthermore, suppose $m = 2(n-\gamma_{\esd}) - 1$. Thus the above inequalities are all equalities, implying that $m = n-1$ and $\gamma_{\esd} = \frac{1}{2}n$. In particular, $G$ is a tree of order~$n \ge 4$ with $\gamma_{\esd}(G) = \frac{1}{2}n$. Hence, by Theorem~\ref{tree-char}, we have $G \in {\cal T}$. Conversely, if $G \in {\cal T}$, then $\gamma_{\esd}(G) = \frac{1}{2}n$ and $m = n-1 = 2(n-\gamma_{\esd}) - 1$.
~\qed
\end{proof}

\section{End super domination number of $G-e$, $G/e$ and $G-v$}

Recall that if $e$ is an edge in a graph $G$, then $G-e$ is the graph obtained from $G$ by removing the edge $e$. In a graph $G$, contraction of an edge $e = uv$ with ends $u$ and $v$ is the replacement of $u$ and $v$ with a single vertex such that edges incident to the new vertex are the edges other than $e$ that were incident with $u$ or $v$. The resulting graph $G/e$ has one less edge than $G$ (\cite{Bondy}).  We refer the reader for more results about $G/e$ to \cite{Nima2}.
In this section, we  examine the effects on $\gamma_{\esd}(G)$ when $G$ is modified by an edge removal and edge contraction.

First, we establish upper and lower bounds for the end super domination number of a graph obtained from edge removal.

\begin{theorem}
\label{G-e-esp}
If $G=(V,E)$ is a graph and $e=uv \in E$, then
\[
\gamma_{\esd}(G)-1 \le \gamma_{\esd}(G-e)\le \gamma_{\esd}(G)+2.
\]
\end{theorem}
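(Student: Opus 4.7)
The plan for both inequalities is to take a $\gamma_{\esd}$-set of one of the two graphs and show how to obtain an ESD-set of the other by adjoining at most one or two carefully chosen vertices. In each direction the argument is a case analysis on where the endpoints $u$ and $v$ of $e$ lie relative to the chosen set; the governing observation is that the neighborhood of a vertex $w\in S$ is altered by deleting or inserting $e$ only when $w\in\{u,v\}$, so at most one previously existing end super dominator can be disrupted.

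For the upper bound $\gamma_{\esd}(G-e)\le\gamma_{\esd}(G)+2$, fix a $\gamma_{\esd}$-set $S$ of $G$ and consider three cases. If $u,v\in S$, then $S$ itself is already an ESD-set of $G-e$, since deleting an edge with both ends in $S$ changes neither the degrees of vertices in $\overline{S}$ nor the private neighborhoods $N(w)\cap\overline{S}$ for $w\in S$. If $u,v\in\overline{S}$, take $S'=S\cup\{u,v\}$; every vertex of $\overline{S'}=\overline{S}\setminus\{u,v\}$ has unchanged degree and is still privately end super dominated by the same vertex of $S$, since no end super dominator in $S$ is touched by removing $e$. If exactly one endpoint, say $v$, lies in $\overline{S}$, take $S'=S\cup\{v\}$; the only point to check is that no vertex of $\overline{S'}$ could have been end super dominated in $G$ by $u$, and this follows because $N_G(u)\cap\overline{S}=\{x\}$ together with $v\in N_G(u)\cap\overline{S}$ would force $x=v\notin\overline{S'}$. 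In every case $|S'|\le|S|+2$.

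For the lower bound $\gamma_{\esd}(G)\le\gamma_{\esd}(G-e)+1$, fix a $\gamma_{\esd}$-set $S$ of $G-e$. When $u,v\in S$ or $u,v\in\overline{S}$, the set $S$ is already an ESD-set of $G$: degrees of vertices in $\overline{S}$ can only increase when passing from $G-e$ to $G$, and the private neighborhoods $N(w)\cap\overline{S}$ for $w\in S$ are unchanged, because in the first subcase any new neighbor of $u$ or $v$ is itself in $S$, and in the second subcase no $S$-vertex is an endpoint of $e$. The subtle configuration is $u\in S$ and $v\in\overline{S}$: if $u$ end super dominates some $x\in\overline{S}$ in $G-e$, then inserting $e$ enlarges $N_G(u)\cap\overline{S}$ to $\{x,v\}$, destroying $x$'s end super dominator. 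The remedy is to take $S'=S\cup\{x\}$, noting $x\neq v$ because $uv\notin E(G-e)$; a short check shows every remaining vertex of $\overline{S'}$ keeps its original end super dominator from $S\setminus\{u\}$ and its degree bound, so $|S'|\le|S|+1$.

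The difficulty here is purely organizational rather than conceptual. In each configuration one must simultaneously verify degree preservation in $\overline{S'}$ and preservation of a private end super dominator for every vertex of $\overline{S'}$; the case split keeps this manageable, and the central simplification is that only the at most one or two end super dominators lying in $\{u,v\}$ can be disturbed, so adding those same vertices (or their unique disrupted private neighbor) to $S$ always repairs the configuration.
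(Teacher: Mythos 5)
Your proof is correct and follows essentially the same route as the paper: take a minimum ESD-set of one of the two graphs and augment it by at most one or two vertices chosen according to how $\{u,v\}$ meets the set, checking that degrees and private end super dominators survive. The only cosmetic differences are that in the mixed case of the lower bound the paper adds $v$ while you add the unique vertex $x$ end super dominated by $u$ (both repairs work, and the untreated subcase where $u$ dominates no one is trivial since $S$ itself then suffices), and your finer case split in the upper bound simply recovers the same bound $\gamma_{\esd}(G)+2$.
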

\begin{proof}
First we find the upper bound for $\gamma_{\sd}(G-e)$. Suppose that $S$ is a $\gamma_{\esd}$-set of $G$, and so $|S| = \gamma_{\esd}(G)$. Since $S$ is an EPD-set of $G$, removing the edge $e$ from $G$ and letting $S'=S\cup \{u,v\}$, yields an EPD-set of $G$', and so $\gamma_{\esd}(G-e) \le |S'| \le |S| + 2 = \gamma_{\esd}(G)+2$.

Next we determine a lower bound for $\gamma_{\sd}(G-e)$. Suppose that $S$ is a $\gamma_{\esd}$-set of $G-e$, and so $|S| = \gamma_{\esd}(G-e)$. If $\{u,v\} \subseteq S$, then in this case, $S$ is also an ESD-set of $G$. If $S \cap \{u,v\} = \emptyset$, then once again $S$ is also an ESD-set of $G$. In both cases, $\gamma_{\esd}(G) \le |S| = \gamma_{\esd}(G-e)$. Hence we may assume that $|S \cap \{u,v\}| = 1$. Renaming vertices if necessary, we may assume that $u \in S$ and $v \notin S$. In this case, every vertex $x \in \overline{S}$ is end super dominated in $G-e$ by a vertex $x'$ of $S$, that is, $N_{G-e}(x') \cap \overline{S} = \{x\}$. Adding back the edge $e$, the set $S \cup \{v\}$ is an end super dominating set of $G$, noting that every vertex $x \in \overline{S} \setminus \{v\}$ is end super dominated in $G$ by the vertex $x'$ defined earlier. Thus, $\gamma_{\esd}(G) \le |S \cup \{v\}| = |S| + 1 = \gamma_{\esd}(G-e)$.~\qed
\end{proof}
	
We remark that if $G=(V,E)$ is a graph and $e=uv \in E$ where $\deg_G(u) \ge 3$, then the upper bound in Theorem~\ref{G-e-esp} can be improved slightly. In this case, if $S$ is a $\gamma_{\esd}$-set of $G$, then the set $S \cup \{v\}$ is an ESD-set of $G-e$, and so $\gamma_{\esd}(G-e) \le |S|+ 1 = \gamma_{\esd}(G) + 1$.

The bounds in Theorem~\ref{G-e-esp} are sharp. To see that the upper bound is sharp, for $k \ge 1$ let $G$ be the path $P_{4k}$ given by $v_1v_2 \ldots v_{4k}$ and let $e = v_2v_3$. Thus, $G - e = P_2 \cup P_{4k-2}$, and so Theorem~\ref{thm-path-esp} we have $\gamma_{\esd}(G) = 2k$ and $\gamma_{\esd}(G-e) = \gamma_{\esd}(P_2) + \gamma_{\esd}(P_{4k-2}) = 2 + 2k = \gamma_{\esd}(G) + 2$. To see that the lower bound in sharp, the simplest example is to take $G = K_4 - f$ where $f$ is an arbitrary edge of the $K_4$, and let $e$ be the edge in $G$ that joins the two vertices of degree~$3$ in $G$, and so $G - e = C_4$. In this case, $\gamma_{\esd}(G) = 3$ and $\gamma_{\esd}(G-e) = 2 = \gamma_{\esd}(G) - 1$.

In our earlier example when $G = P_{4k}$ where $k \ge 1$, we note that the removal of the selected edge $e$ from $G$ produces a disconnected graph. This raises the question of whether the upper bound in Theorem~\ref{G-e-esp} can be improved if $G-e$ is connected. The following example shows that it is not possible.

\begin{example}
{\normalfont
Consider the graph $G$ shown in Figure~\ref{G-e-upper-esp-connected}, and let $e=v_{10}v_{16}$. The graph $G$ has order~$n = 20$. Further, the set $S=\{v_1,v_4,v_5,v_8,v_9,v_{11},v_{14},v_{15},v_{17},v_{20}\}$ is an ESD-set of $G$ of cardinality~$\frac{1}{2}n = 10$. By Remark \ref{thm-mix}, the set $S$ is therefore a $\gamma_{\esd}$-set of $G$, and so $\gamma_{\esd}(G) = 10$. However it is simple exercise to check (or use a computer) that $\gamma_{\esd}(G - e) = 12 = \gamma_{\esd}(G)$. The set $S \cup \{v_{10},v_{16}\}$ is an example of a $\gamma_{\esd}$-set of $G - e$.
}
\end{example}

\begin{figure}
		\begin{center}
			\psscalebox{0.6 0.6}
{
\begin{pspicture}(0,-6.015)(8.72,-0.045)
\psdots[linecolor=black, dotsize=0.4](0.28,-2.215)
\psdots[linecolor=black, dotsize=0.4](1.88,-2.215)
\psdots[linecolor=black, dotsize=0.4](3.48,-2.215)
\psdots[linecolor=black, dotsize=0.4](5.08,-2.215)
\psdots[linecolor=black, dotsize=0.4](6.68,-2.215)
\psdots[linecolor=black, dotsize=0.4](8.28,-2.215)
\psdots[linecolor=black, dotsize=0.4](0.28,-3.815)
\psdots[linecolor=black, dotsize=0.4](1.88,-3.815)
\psdots[linecolor=black, dotsize=0.4](3.48,-3.815)
\psdots[linecolor=black, dotsize=0.4](5.08,-3.815)
\psdots[linecolor=black, dotsize=0.4](6.68,-3.815)
\psdots[linecolor=black, dotsize=0.4](8.28,-3.815)
\psdots[linecolor=black, dotsize=0.4](1.88,-5.415)
\psdots[linecolor=black, dotsize=0.4](0.28,-5.415)
\psdots[linecolor=black, dotsize=0.4](3.48,-5.415)
\psdots[linecolor=black, dotsize=0.4](5.08,-5.415)
\psdots[linecolor=black, dotsize=0.4](0.28,-0.615)
\psdots[linecolor=black, dotsize=0.4](1.88,-0.615)
\psdots[linecolor=black, dotsize=0.4](3.48,-0.615)
\psdots[linecolor=black, dotsize=0.4](5.08,-0.615)
\psline[linecolor=black, linewidth=0.08](0.28,-2.215)(8.28,-2.215)(8.28,-3.815)(0.28,-3.815)(0.28,-2.215)(0.28,-2.215)
\psline[linecolor=black, linewidth=0.08](0.28,-0.615)(1.88,-0.615)(1.88,-5.415)(0.28,-5.415)(0.28,-5.415)
\psline[linecolor=black, linewidth=0.08](5.08,-0.615)(3.48,-0.615)(3.48,-5.415)(5.08,-5.415)(5.08,-5.415)
\psline[linecolor=black, linewidth=0.08](5.08,-2.215)(5.08,-3.815)(5.08,-3.815)
\psline[linecolor=black, linewidth=0.08](6.68,-2.215)(6.68,-3.815)(6.68,-3.815)
\rput[bl](0.06,-0.295){$v_1$}
\rput[bl](1.74,-0.355){$v_2$}
\rput[bl](3.34,-0.395){$v_3$}
\rput[bl](4.9,-0.395){$v_4$}
\rput[bl](0.08,-1.955){$v_5$}
\rput[bl](1.96,-1.955){$v_6$}
\rput[bl](3.56,-1.975){$v_7$}
\rput[bl](4.9,-1.975){$v_8$}
\rput[bl](6.52,-1.955){$v_9$}
\rput[bl](8.08,-1.995){$v_{10}$}
\rput[bl](0.1,-4.335){$v_{11}$}
\rput[bl](2.0,-4.295){$v_{12}$}
\rput[bl](3.64,-4.355){$v_{13}$}
\rput[bl](4.78,-4.355){$v_{14}$}
\rput[bl](6.42,-4.335){$v_{15}$}
\rput[bl](8.06,-4.295){$v_{16}$}
\rput[bl](8.52,-3.115){$e$}
\rput[bl](0.0,-5.995){$v_{17}$}
\rput[bl](1.6,-5.995){$v_{18}$}
\rput[bl](3.2,-6.015){$v_{19}$}
\rput[bl](4.82,-5.975){$v_{20}$}
\end{pspicture}
}
		\end{center}
		\caption{Graph $G$} \label{G-e-upper-esp-connected}
	\end{figure}

In \cite{Nima}, it is shown that if $G=(V,E)$ is a graph and $e\in E$, then for $G/e$ we have $\gamma_{\sd}(G) - 1 \le \gamma_{\sd}(G/e)\le \gamma_{\sd}(G)$. Moreover, it is shown that these bounds are sharp. Since edge contraction does not create additional vertices of degree~$1$, an identical proof as in~\cite{Nima} yields the following result.

\begin{proposition}
\label{G/e-esp}
If $G=(V,E)$ is a graph and $e\in E$, then
\[
\gamma_{\esd}(G) - 1 \le \gamma_{\esd}(G/e)\le \gamma_{\esd}(G).
\]
\end{proposition}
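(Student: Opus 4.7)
Write $e=uv$ and let $w$ denote the merged vertex of $G/e$, so that $V(G/e)=(V\setminus\{u,v\})\cup\{w\}$ and $N_{G/e}(w)=(N_G(u)\cup N_G(v))\setminus\{u,v\}$. The plan is to transfer $\gamma_{\esd}$-sets across the contraction in both directions, following the super domination argument from~\cite{Nima} and verifying the extra degree-$\ge 2$ condition from the definition of an ESD-set at each step.

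For the upper bound $\gamma_{\esd}(G/e)\le \gamma_{\esd}(G)$, I would take a $\gamma_{\esd}$-set $S$ of $G$ and split according to $|S\cap\{u,v\}|$. If $S\cap\{u,v\}=\emptyset$, then $S'=S$ (viewed inside $V(G/e)$) should already be an ESD-set of $G/e$: the vertex that end super dominated $u$ (or $v$) in $G$ now end super dominates $w$ in $G/e$, and every other end super dominator is unaffected. If exactly one of $u,v$, say $u$, lies in $S$, then I would take $S'=(S\setminus\{u\})\cup\{w\}$; since the edge $uv$ forces $v\in N_G(u)\cap\overline{S}$, no $x\in\overline{S}\setminus\{v\}$ could have been end super dominated by $u$ in $G$, so each surviving end super dominator still works in $G/e$. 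If $\{u,v\}\subseteq S$, I would start from $S'=(S\setminus\{u,v\})\cup\{w\}$; the only obstruction is when $u$ and $v$ end super dominated distinct vertices $x_u\ne x_v$, in which case $w$ inherits both as external private neighbors, and I would repair this by moving $x_u$ into $S'$, keeping $|S'|\le|S|$.

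For the lower bound $\gamma_{\esd}(G)\le \gamma_{\esd}(G/e)+1$, I would take a $\gamma_{\esd}$-set $S'$ of $G/e$ and produce $S\subseteq V$ with $|S|\le|S'|+1$. If $w\in S'$, set $S=(S'\setminus\{w\})\cup\{u,v\}$; then $\overline{S}=\overline{S'}$ as vertex sets, degrees in $G$ are at least those in $G/e$, and the end super dominator role of $w$ is taken over by whichever of $u,v$ actually carried the unique external private edge. If $w\notin S'$, let $w'\in S'$ be the end super dominator of $w$ in $G/e$; then $w'$ is adjacent in $G$ to $u$ or to $v$. Choose $t\in\{u,v\}$ so that $w'$ is adjacent to $t$ in $G$, and set $S=S'\cup\{t'\}$, where $t'$ is the other endpoint; a short check shows that $t\in\overline{S}$ is end super dominated in $G$ by $w'$, and every $x\in\overline{S'}\setminus\{w\}$ keeps its old end super dominator, which cannot be adjacent to $t$ in $G$ (since otherwise it would already have witnessed $w$ as an external private neighbor in $G/e$).

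The hard part will be the degree-$\ge 2$ bookkeeping. The only way a vertex $x\notin\{u,v\}$ can drop to degree~$1$ after contracting $uv$ is when $\deg_G(x)=2$ and $N_G(x)=\{u,v\}$; if such an $x$ ever lies in $\overline{S'}$ (upper bound) or $\overline{S}$ (lower bound), then the ESD property forces some member of $\{u,v\}$ to end super dominate $x$, and then the edge $uv$ immediately supplies a second external private neighbor, a contradiction. This observation is the content of the paper's parenthetical remark that contraction does not create additional degree-$1$ vertices in our setting, and it is precisely what lets the super domination proof of~\cite{Nima} transfer verbatim to the end super domination version.
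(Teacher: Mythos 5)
Your overall strategy (transfer an ESD-set across the contraction in both directions) is sound, and in fact the paper itself gives no details here: it simply asserts that the proof of the corresponding super domination bounds in~\cite{Nima} carries over because contraction "does not create additional vertices of degree~$1$". Your lower-bound argument and the upper-bound subcases $|S\cap\{u,v\}|\le 1$ check out. However, there is a genuine gap in the upper-bound subcase $\{u,v\}\subseteq S$, and it lies exactly in the degree bookkeeping you relegate to the final paragraph. Your contradiction there assumes that if some $x$ with $N_G(x)=\{u,v\}$ lies in $\overline{S}$ and is end super dominated by, say, $u$, then the edge $uv$ forces a second external private neighbor of $u$; but that only works when $v\in\overline{S}$. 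When both $u,v\in S$ such an $x$ is perfectly legal, and after contraction it has degree~$1$ while your constructed set leaves it in the complement. Concretely, let $G=K_4$ minus the edge $xy$, let $e=uv$ join the two degree-$3$ vertices, and take the $\gamma_{\esd}$-set $S=\{u,v,y\}$ (here $\gamma_{\esd}(G)=3$). Both $u$ and $v$ privately dominate the same vertex $x$, so your repair is not triggered, and your construction outputs $S'=\{w,y\}$ in $G/e\cong P_3$; this is not an ESD-set because $x$ has degree~$1$ in $G/e$. The inequality itself survives ($\gamma_{\esd}(P_3)=2\le 3$), but your certificate does not, and the stated justification for why the degree condition cannot fail is false in precisely this subcase. (A second, minor omission of the same kind: when $S\cap\{u,v\}=\emptyset$ the new vertex $w$ lies in $\overline{S'}$ and you never verify $\deg_{G/e}(w)\ge 2$; this does hold, since the distinct end super dominators of $u$ and of $v$ in $S$ are both neighbors of $w$, but it needs saying.)

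The good news is that the gap is repairable within your own framework, because in the subcase $\{u,v\}\subseteq S$ you start with $|(S\setminus\{u,v\})\cup\{w\}|=|S|-1$, i.e.\ one unit of slack. One can check that at most one vertex $x$ with $N_G(x)=\{u,v\}$ can lie in $\overline{S}$ (two such vertices would both lie in the private neighborhood of whichever of $u,v$ dominates one of them), and any such $x$ is necessarily the unique private neighbor of $u$ or of $v$. So the correct repair in this subcase is: add to $S'$ the degree-deficient vertex if it exists, and otherwise add $x_u$ as you propose when $u$ and $v$ privately dominate distinct vertices; in either event $|S'|\le|S|$ and one verifies, as you do elsewhere, that $w$ end super dominates the remaining orphaned vertex. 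As written, though, your proof both constructs an invalid set in the example above and asserts a false reason why no such problem can occur, so the upper bound is not yet established.
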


Recall that if $v$ is a vertex in a graph $G$, then $G-v$ is the graph obtained from $G$ by deleting the edge~$v$ (and all edges incident with $v$). We next examine the effects on $\gamma_{\esd}(G)$ when $G$ is modified by a vertex removal.

\begin{proposition}
\label{G-v-esp}
If $G=(V,E)$ is a graph and $v \in V$, then
\[
\gamma_{\esd}(G)-1 \le \gamma_{\esd}(G-v) \le \gamma_{\esd}(G)+\deg_G(v)-1.
\]
\end{proposition}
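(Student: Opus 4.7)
The plan is to prove each inequality separately by modifying a $\gamma_{\esd}$-set and tracking which neighborhoods change upon deletion of $v$.

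For the lower bound $\gamma_{\esd}(G)-1 \le \gamma_{\esd}(G-v)$, I would start with a $\gamma_{\esd}$-set $S'$ of $G-v$ and show that $S := S' \cup \{v\}$ is an ESD-set of $G$. The key observation is that, as vertex sets, $V(G) \setminus S = V(G-v) \setminus S'$; hence each $u \in V(G) \setminus S$ inherits its end super dominator $w_u \in S' \subseteq S$ from $G-v$, and the singleton-intersection identity $N_G(w_u) \cap (V(G) \setminus S) = N_{G-v}(w_u) \cap (V(G-v) \setminus S') = \{u\}$ holds because $v \in S$ automatically lies outside $V(G) \setminus S$. The degree requirement is immediate from $\deg_G(u) \ge \deg_{G-v}(u) \ge 2$, yielding $\gamma_{\esd}(G) \le |S| = \gamma_{\esd}(G-v) + 1$.

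For the upper bound $\gamma_{\esd}(G-v) \le \gamma_{\esd}(G) + \deg_G(v) - 1$, I would start with a $\gamma_{\esd}$-set $S$ of $G$ and take the candidate
\[
S' := (S \setminus \{v\}) \cup (N_G(v) \setminus S) \subseteq V(G-v).
\]
If $v \in S$, then $|S'| \le |S| - 1 + \deg_G(v)$ directly. If $v \notin S$, then by Definition~\ref{Def} some $w \in S \cap N_G(v)$ end super dominates $v$ in $G$, forcing $|N_G(v) \setminus S| \le \deg_G(v) - 1$ and again $|S'| \le |S| + \deg_G(v) - 1$. To verify that $S'$ is an ESD-set of $G-v$, one computes $V(G-v) \setminus S' = \overline{S} \setminus N_G[v]$. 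For any $u$ in this set, its end super dominator $w_u \in S$ in $G$ satisfies $N_G(w_u) \cap \overline{S} = \{u\}$; since $u \notin N_G(v)$, the witness $w_u$ cannot be $v$, so $w_u \in S \setminus \{v\} \subseteq S'$. As $v \in N_G[v]$ is excluded from $\overline{S} \setminus N_G[v]$, the relation $N_{G-v}(w_u) \cap (V(G-v) \setminus S') = N_G(w_u) \cap (\overline{S} \setminus N_G[v]) = \{u\}$ is preserved, and $u \notin N_G(v)$ gives $\deg_{G-v}(u) = \deg_G(u) \ge 2$.

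The main (if modest) obstacle is the case analysis for the size of $S'$ in the upper bound: when $v \notin S$, one must invoke the ESD property of $S$ to guarantee at least one neighbor of $v$ already lies in $S$, which is precisely what shaves off the needed unit and matches the claimed bound $\deg_G(v)-1$. Once this is noted, the verification of the ESD property reduces to the clean identification $V(G-v) \setminus S' = \overline{S} \setminus N_G[v]$, after which witnesses and their private neighborhoods carry over from $G$ to $G-v$ without change.
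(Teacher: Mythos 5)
Your proof is correct, and both bounds follow the paper's basic strategy of perturbing a minimum ESD-set; your lower bound (adjoin $v$ to a $\gamma_{\esd}$-set of $G-v$) is exactly the paper's argument, just written out in detail. For the upper bound, however, your unified construction $S' = (S\setminus\{v\})\cup(N_G(v)\setminus S)$ is a genuine refinement. The paper splits into two cases: when $v\in S$ it uses $(S\setminus\{v\})\cup N(v)$, which coincides with your set, but when $v\notin S$ it simply asserts that $S$ itself is an ESD-set of $G-v$ and concludes $\gamma_{\esd}(G-v)\le\gamma_{\esd}(G)$. That assertion can fail: a vertex $u\in\overline{S}\cap N_G(v)$ loses a neighbor when $v$ is deleted and may have degree~$1$ in $G-v$ (take $G=C_4$ with $S$ two adjacent vertices and $v\in\overline{S}$), so the degree requirement of Definition~\ref{Def} is violated even though the singleton-witness condition survives. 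Your construction avoids this by moving all of $N_G(v)\setminus S$ into the set, so that $V(G-v)\setminus S'=\overline{S}\setminus N_G[v]$ consists only of vertices whose degree and witness are untouched by the deletion, and your observation that the witness of $v$ lies in $S\cap N_G(v)$ recovers the $-1$ in the count when $v\notin S$. In short, your argument yields the stated bound cleanly in both cases, whereas the paper's first case, as written, needs exactly the repair you supply.
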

\begin{proof}
Let $S$ be a $\gamma_{\esd}$-set of $G$. If $v \notin S$, then $S$ is an ESD-set for $G-v$, and in this case $\gamma_{\esd}(G-v) \le |S| = \gamma_{\esd}(G)$. Hence we may assume that $v \in S$. The set $(S \setminus \{v\}) \cup N(v)$ is an ESD-set for $G-v$, and so in this case $\gamma_{\esd}(G-v) \le (|S| - 1) + \deg_G(v) = \gamma_{\esd}(G) + \deg(v)-1$. This establishes the desired upper bound. To prove the lower bound for  $\gamma_{\esd}(G-v)$, we note that every $\gamma_{\esd}$-set of $G-v$ can be extended to an ESP-set of $G$ by adding to it the vertex~$v$, implying that $\gamma_{\esd}(G) \le \gamma_{\esd}(G-v) + 1$.~\qed
\end{proof}			

We remark that the bounds in Theorem \ref{G-v-esp} are sharp.  To see that the upper bound is sharp, for $k \ge 2$ let $G$ be graph obtained from a star $K_{1,k}$ with central vertex~$v$ by subdividing every edge exactly once. The resulting graph $G$ satisfies $\gamma_{\esd}(G) = k+1$, where the set of $k$ leaves, together with the vertex~$v$, form a $\gamma_{\esd}$-set of $G$. However the graph $G-v$ consists of $k$ vertex disjoint copies of $K_2$, and so $\gamma_{\esd}(G-v) = 2k = (k+1) + k - 1 = \gamma_{\esd}(G)+\deg_G(v)-1$. To see that the lower bound in sharp, for $k \ge 1$ let $G$ be the path $P_{4k+1}$ and let $v$ be a leaf of $G$. We note that $G - v = P_{4k}$. By Theorem~\ref{thm-path-esp}, $\gamma_{\esd}(G) = 2k+1$ and $\gamma_{\esd}(G-v) = 2k = \gamma_{\esd}(G) - 1$.

\section{A linear approach to end super domination number}

In this section, we consider the rank of the adjacency matrix of a graph and find a tight lower bound for it, based on its end super domination  number.

\begin{theorem}
If $G$ is a connected graph of order $n$ with adjacency matrix $A$, then
\[
\rank(A) \ge n-\gamma_{\esd}(G),
\]
with equality if and only if $G$ is a complete bipartite graph $K_{n,m}$ where $\min\{n,m\}\ge 2$.
\end{theorem}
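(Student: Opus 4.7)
The plan is to obtain the inequality from a rank-$r$ submatrix and then handle the equality case by exploiting the linear relations this forces. For the inequality, let $S$ be a $\gamma_{\esd}$-set of $G$, set $r=n-\gamma_{\esd}(G)=|\overline{S}|$, and for each $u\in\overline{S}$ choose a witness $v_u\in S$ with $N(v_u)\cap\overline{S}=\{u\}$. Because $|N(v_u)\cap\overline{S}|=1$, two distinct vertices of $\overline{S}$ cannot share a witness, so $S':=\{v_u:u\in\overline{S}\}$ has cardinality $r$. Reordering the rows indexed by $S'$ and the columns indexed by $\overline{S}$ so that each $v_u$ is paired with $u$, the submatrix $A[S',\overline{S}]$ becomes the $r\times r$ identity, hence $\rank(A)\ge r$.

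For the sufficiency, if $G=K_{n',m'}$ with $\min\{n',m'\}\ge 2$ then by Proposition~\ref{thm-special}(b) we have $\gamma_{\esd}(G)=n'+m'-2$, and the adjacency matrix has only two distinct row patterns (one per partite set), so $\rank(A)=2=n-\gamma_{\esd}(G)$. For the necessity, assume $\rank(A)=r$. Then the $r$ rows indexed by $S'$ span the row space, and for each $w\in V\setminus S'$ the row $A_w$ is a linear combination of them; reading off coefficients from the columns indexed by $\overline{S}$ pins them down and yields the relation
\[
A_w \;=\; \sum_{u\in N(w)\cap\overline{S}} A_{v_u},
\]
which, because the left-hand side is $0$-$1$, is equivalent to the disjoint-union decomposition $N_G(w)=\bigsqcup_{u\in N(w)\cap\overline{S}} N_G(v_u)$. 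The symmetric column analysis, specialised to the entry $(u',v_{u'})$ with $u'\in\overline{S}$, produces a unique $\sigma(u')\in N_H(u')$, where $H=G[\overline{S}]$, with $v_{\sigma(u')}\sim v_{u'}$, and a routine argument shows that $\sigma$ is a fixed-point-free involution on $\overline{S}$; in particular $r$ is even.

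The main obstacle is the necessity direction: squeezing enough from the $\sigma$-pairing and the disjoint-union decomposition to force $r=2$. My plan is to show that $r\ge 3$ is incompatible with the requirement that $\{v_x:x\in N_H(w)\}$ be an independent set in $G$ for every $w\in\overline{S}$ (which is forced by the disjointness of the union), while simultaneously paired witnesses $v_u\sim v_{\sigma(u)}$ must be adjacent; a casework analysis on the structure of $H$ (equivalently, of the pair-graph induced by $\sigma$) should show that no connected $G$ can support more than a single $\sigma$-pair. Once $r=2$ is in hand, the classical fact that a connected graph with rank-$2$ adjacency matrix is a complete bipartite graph $K_{n',m'}$ closes the argument, and $\min\{n',m'\}\ge 2$ follows because a star $K_{1,m}$ satisfies $n-\gamma_{\esd}(G)=1\ne 2=\rank(A)$.
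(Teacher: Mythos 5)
Your inequality argument and your sufficiency check are correct and match the paper: the witnesses $v_u$ are pairwise distinct, the submatrix $A[S',\overline{S}]$ is a permutation matrix, so $\rank(A)\ge n-\gamma_{\esd}(G)$; and for $K_{n',m'}$ with $\min\{n',m'\}\ge 2$ one has $\gamma_{\esd}=n'+m'-2$ and $\rank(A)=2$. Your preparatory steps for necessity are also sound: reading coefficients off the columns of $\overline{S}$ does give $A_w=\sum_{u\in N(w)\cap\overline{S}}A_{v_u}$ for every $w\notin S'$, the $0$--$1$ constraint does force the disjoint-union decomposition, and the pairing $\sigma$ is indeed a fixed-point-free involution (so each pair $u,\sigma(u),v_{\sigma(u)},v_u$ spans an induced $C_4$, which is exactly the structure the paper extracts from its block-matrix analysis, where $B$ is forced to be a permutation matrix and $G[L_1\cup L_2]$ a disjoint union of $4$-cycles).

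The genuine gap is that the heart of the necessity direction is only announced, not proved: ``a casework analysis \ldots should show that no connected $G$ can support more than a single $\sigma$-pair'' is precisely the hard step, and the tools you propose to use (independence of $\{v_x: x\in N_H(w)\}$ together with $v_u\sim v_{\sigma(u)}$) are not enough on their own. Note that the disjoint union $2C_4$ satisfies every one of your local constraints and attains $\rank(A)=n-\gamma_{\esd}=4$; so any proof must exploit connectivity quantitatively, i.e.\ analyze how two $\sigma$-pairs could be linked, possibly through vertices of $S\setminus S'$ (the paper's set $L_3$), and derive a contradiction with the assumed value of the rank. This is where the paper does real work: it first shows each $L_3$ vertex attaches to exactly one vertex of $L_1$ and one of $L_2$ within a single $C_4$, and then rules out two or more $C_4$'s by exhibiting an induced $P_6$, a triangle configuration, or $3P_2$, whose adjacency rank added to that of the remaining disjoint $C_4$'s exceeds $n-\gamma_{\esd}$ (using that the rank of a principal submatrix is at most $\rank(A)$). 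Your sketch contains neither the analysis of the $S\setminus S'$ attachments nor any mechanism (such as these induced-subgraph rank bounds) that turns connectivity into a contradiction, so the characterization of equality remains unproven as written. Once $r=2$ is actually established, your appeal to the rank-$2$ classification of connected graphs and the exclusion of stars is fine and mirrors the paper's citation of the same fact.
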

\begin{proof}
For simplicity let $\gamma_{\esd} = \gamma_{\esd}(G)$. Let $V(G)=\{v_1,v_2,\ldots,v_n\}$. There exists a  matching $M$ of size $n-\gamma_{\esd}$, say
\[
M = \bigcup_{i=1}^{n-\gamma_{\esd}} \{v_i v_{n-\gamma_{\esd}+i} \}
\]
where $\{v_1,v_2,\ldots,v_{n-\gamma_{\esd}}\}\subseteq S$, and $S$ is a $\gamma_{\esd}$-set of $G$, and moreover $v_iv_j \notin E(G)$, for each $i,j$, $1\le i\le n-\gamma_{\esd}$ and  $n-\gamma_{\esd} + 1\le j\le 2(n-\gamma_{\esd})$. Let
\[
L_1 = \bigcup_{i=1}^{n-\gamma_{\esd}} \{v_i\}
\hspace*{0.25cm} \mbox{and} \hspace*{0.25cm}
L_2 = \bigcup_{i=1}^{n-\gamma_{\esd}} \{v_{n-\gamma_{\esd}+i} \}
\]
and let $L_3=V(G)\setminus(L_1\cup L_2)$. We now consider the adjacency matrix $A$ as follows:

\[
A=\begin{blockarray}{cccc}
 & L_1 & L_2 & L_3 \\
\begin{block}{c[c|c|c]}
L_1 & B & I_{n-\gamma_{\esd}} & D^T \\
\cmidrule{2-4}
L_2 & I_{n-\gamma_{\esd}} & C & E^T \\
\cmidrule{2-4}
L_3 & D & E & F\\
\end{block}
\end{blockarray}
\]

Since the rows of $A$ corresponding to $L_1$ are linearly independent, we have $\rank(A)\ge n-\gamma_{\esd}(G)$. Now, assume that $\rank(A)= n-\gamma_{\esd}(G)$. In this case, every row of $A$ corresponding to $L_2$ is a linear combination of rows in $L_1$. Since $C$ is a $(0,1)$-matrix, every coefficient in the linear combination is $0$ or $1$. In the $L_2\times L_1$ matrix, we have the identity matrix  $I_{n-\gamma_{\esd}}$ and every row of $I_{n-\gamma_{\esd}}$ is a linear combination of rows of $B$, implying that all columns of $B$ are non-zero. On the other hand, no row of $B$ has more than one component equal to $1$. Thus since $B$ is a symmetric matrix, we conclude that $B$ is a permutation matrix. This implies that every row of $A$ corresponding to $L_2$ is exactly one row of $A$ corresponding to $L_1$. Therefore, $B=C$.

Since $B$ is a permutation matrix, the induced vertex subgraphs $G[L_1]$ and $G[L_2]$ are $1$-regular graphs. Now, since $B=C$, without loss of generality, we may assume that  $G[L_1 \cup L_2]$ is a disjoint union of $\frac{1}{2}(n-\gamma_{\esd})$ cycles of length~$4$. Since  every row of $A$ corresponding to $L_3$ is a linear combination of rows of $A$ corresponding to $L_2$ and since $C=B$ is a permutation matrix and $E$ is a $(0,1)$-matrix, we conclude that every row of $A$ in $L_3$ is exactly a row of $A$ in $L_2$. This implies that every row of $D$ has exactly one component equal to~$1$. The same property holds for $E$. Thus, every vertex in $L_3$ is adjacent to exactly one vertex in $L_1$ and one vertex in $L_2$. We note that the rank of the adjacency matrix of $C_4$ is~$2$. Hence for every natural number $t$, the rank of adjacency matrix of the disjoint union of $t$ copies of $C_4$ is $2t$. Now, let $v_i\in L_3$. If $v_i$ is adjacent to two different copies of $C_4$ in $G[L_1 \cup L_2]$, then $G$ has an induced vertex subgraph as illustrated in Figure \ref{inducedG}. Therefore, $G$ contains $P_6$ as an induced vertex subgraph, implying that
\[
n-\gamma_{\esd}=\rank(A)\ge 6+2\left( \frac{n-\gamma_{\esd}}{2}-2 \right),
\]
a contradiction. Thus every $v_i \in L_3$ is adjacent to exactly two vertices of only one copy of $C_4$ in $G[L_1 \cup L_2]$.

\begin{figure}[!h]
		\begin{center}
			\psscalebox{0.6 0.6}
{
\begin{pspicture}(0,-4.6014423)(6.794231,-2.2043269)
\psdots[linecolor=black, dotsize=0.4](0.19711548,-2.4014423)
\psdots[linecolor=black, dotsize=0.4](1.7971154,-2.4014423)
\psdots[linecolor=black, dotsize=0.4](1.7971154,-4.0014424)
\psdots[linecolor=black, dotsize=0.4](0.19711548,-4.0014424)
\psdots[linecolor=black, dotsize=0.4](3.3971155,-4.0014424)
\psdots[linecolor=black, dotsize=0.4](4.9971156,-4.0014424)
\psdots[linecolor=black, dotsize=0.4](4.9971156,-2.4014423)
\psdots[linecolor=black, dotsize=0.4](6.5971155,-2.4014423)
\psdots[linecolor=black, dotsize=0.4](6.5971155,-4.0014424)
\psline[linecolor=black, linewidth=0.08](1.7971154,-4.0014424)(1.7971154,-2.4014423)(0.19711548,-2.4014423)(0.19711548,-4.0014424)(6.5971155,-4.0014424)(6.5971155,-2.4014423)(4.9971156,-2.4014423)(4.9971156,-4.0014424)(4.9971156,-4.0014424)
\rput[bl](3.2771156,-4.6014423){$v_i$}
\end{pspicture}
}
		\end{center}
		\caption{Induced vertex subgraph of $G$} \label{inducedG}
	\end{figure}

Suppose next that $G$ has an induced vertex subgraph as illustrated in Figure~\ref{inducedG2}. In this case, since the rank of the adjacency matrix of $C_3$ is~$3$, we conclude that
\[
n-\gamma_{\esd}=\rank(A)\ge 3+2\left( \frac{n-\gamma_{\esd}}{2}-1 \right),
\]
a contradiction.

\begin{figure}[!h]
		\begin{center}
			\psscalebox{0.6 0.6}
{
\begin{pspicture}(0,-4.4)(4.1271152,-2.405769)
\psline[linecolor=black, linewidth=0.08](0.19711548,-2.6028845)(1.7971154,-2.6028845)(1.7971154,-4.2028847)(0.19711548,-4.2028847)(0.19711548,-2.6028845)(0.19711548,-2.6028845)
\psline[linecolor=black, linewidth=0.08](1.7971154,-2.6028845)(3.3971155,-3.4028845)(1.7971154,-4.2028847)(1.7971154,-4.2028847)
\psdots[linecolor=black, dotsize=0.4](1.7971154,-2.6028845)
\psdots[linecolor=black, dotsize=0.4](0.19711548,-2.6028845)
\psdots[linecolor=black, dotsize=0.4](0.19711548,-4.2028847)
\psdots[linecolor=black, dotsize=0.4](1.7971154,-4.2028847)
\psdots[linecolor=black, dotsize=0.4](3.3971155,-3.4028845)
\rput[bl](3.7971156,-3.4028845){$v_i$}
\end{pspicture}
}
		\end{center}
		\caption{Induced vertex subgraph of $G$} \label{inducedG2}
	\end{figure}

Now, assume that $G[L_1 \cup L_2]$ contains at least two copies of $C_4$. Since $G$ is connected, there exist two vertices $v_i$ and $v_j$ in $L_3$ such that $G$ has an induced vertex subgraph as illustrated in Figure~\ref{inducedG3}, where $H$ is a connected subgraph of $G[L_3]$. Clearly, this graph contains $P_6$ or $3P_2$ as an induced vertex subgraph. Hence we infer that
\[
n-\gamma_{\esd}=\rank(A)\ge 6+2\left( \frac{n-\gamma_{\esd}}{2}-2 \right),
\]
a contradiction. Thus, $G[L_1 \cup L_2] = C_4$, and $\frac{1}{2}(n-\gamma_{\esd})=1$. Therefore, $\rank(A)=n-\gamma_{\esd}=2$, and by~\cite{Akbari}, $G$ is a complete bipartite graph.\qed
\end{proof}

\begin{figure}[!h]
		\begin{center}
			\psscalebox{0.6 0.6}
{
\begin{pspicture}(0,-4.8)(12.394231,-2.005769)
\rput[bl](4.7771153,-3.1628845){$v_i$}
\psdots[linecolor=black, dotsize=0.4](1.3971155,-2.2028844)
\psdots[linecolor=black, dotsize=0.4](0.19711548,-3.4028845)
\psdots[linecolor=black, dotsize=0.4](1.3971155,-4.6028843)
\psdots[linecolor=black, dotsize=0.4](2.5971155,-3.4028845)
\psdots[linecolor=black, dotsize=0.4](4.5971155,-3.4028845)
\psline[linecolor=black, linewidth=0.08](4.5971155,-3.4028845)(1.3971155,-2.2028844)(0.19711548,-3.4028845)(1.3971155,-4.6028843)(2.5971155,-3.4028845)(1.3971155,-2.2028844)(1.3971155,-2.2028844)
\psline[linecolor=black, linewidth=0.08](1.3971155,-4.6028843)(4.5971155,-3.4028845)(4.5971155,-3.4028845)
\psdots[linecolor=black, dotsize=0.4](7.7971153,-3.4028845)
\psdots[linecolor=black, dotsize=0.4](9.797115,-3.4028845)
\psdots[linecolor=black, dotsize=0.4](10.997115,-2.2028844)
\psdots[linecolor=black, dotsize=0.4](12.197116,-3.4028845)
\psdots[linecolor=black, dotsize=0.4](10.997115,-4.6028843)
\psline[linecolor=black, linewidth=0.08](7.7971153,-3.4028845)(10.997115,-2.2028844)(12.197116,-3.4028845)(10.997115,-4.6028843)(9.797115,-3.4028845)(10.997115,-2.2028844)(10.997115,-2.2028844)
\psline[linecolor=black, linewidth=0.08](7.7971153,-3.4028845)(10.997115,-4.6028843)(10.997115,-4.6028843)
\psellipse[linecolor=black, linewidth=0.08, linestyle=dashed, dash=0.17638889cm 0.10583334cm, dimen=outer](6.1971154,-3.4028845)(2.4,1.2)
\rput[bl](7.3171153,-3.1828845){$v_j$}
\rput[bl](5.8571153,-4.2028847){\large{$H$}}
\end{pspicture}
}
		\end{center}
		\caption{Induced vertex subgraph of $G$} \label{inducedG3}
	\end{figure}

\section{Enumeration of minimum end super dominating sets}

This section is devoted to enumeration of minimum end super dominating sets, which involves counting the number of distinct $\gamma_{\esd}$-sets of a graph $G$. Given a graph $G$, let ${\cal N}_{\esd}(G)$ be the family of $\gamma_{\esd}$-sets of $G$ and let $N_{\esd}(G)=|{\cal N}_{\esd}(G)|$. Our aim is to compute $N_{\esd}(G)$ for special graph classes. Building on the result of
Proposition~\ref{thm-special}, one can readily determine the value $N_{\esd}(G)$ when $G$ is a complete graph, a complete bipartite graph, or a star graph.

\begin{example}
{\normalfont
The following properties hold. \\ [-22pt]
\begin{enumerate}
\item[{\rm (a)}] For $n \ge 3$, $N_{\esd}(K_n)= n$. \1
\item[{\rm (b)}] For $\min\{n,m\} \ge 2$, $N_{\esd}(K_{n,m}) = nm$. \1
\item[{\rm (c)}] For $n \ge 2$, $N_{\esd}(K_{1,n})= 1$. \1
\end{enumerate}
}
\end{example}

By Remark \ref{thm-mix}, if $G$ has no pendant vertices, then $\gamma_{\esd}(G) = \gamma_{\sd}(G)$. Hence if $G$ has no pendant vertices, then $N_{\esd}(G)$ is equal to the number of $\gamma_{\sd}$-sets of $G$. In particular, if $G$ is a connected graph with minimum degree at least~$2$, then $N_{\esd}(G)$ is equal to the number of $\gamma_{\sd}$-sets of $G$. The number of such sets when $G$ is a cycle was enumerated in~\cite{Nima1}.  We present here a simplified proof of this result using a different counting approach. Parts of the proof will be needed when proving the result for paths.

\begin{theorem}\label{Cycle-counting}
If $C_n$ is the cycle graph of order $n\ge 3$,  then
\begin{displaymath}
N_{\esd}(C_n)= \left\{ \begin{array}{ll}
4 & \textrm{if $n =4k$, } \1 \\
2n & \textrm{if $n =4k+1$,} \1\\
\frac{5}{8}(n^2-2n) & \textrm{if $n =4k+2$,} \1 \\
n & \textrm{if $n =4k+3$.}
\end{array} \right.
\end{displaymath}
\end{theorem}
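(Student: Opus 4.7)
The plan is to first apply Remark~\ref{thm-mix}(b): since $\delta(C_n)=2$ for all $n\ge 3$, we have $\gamma_{\esd}(C_n)=\gamma_{\sd}(C_n)$, and hence $N_{\esd}(C_n)$ equals the number of minimum super dominating sets of $C_n$. I would then encode any such set $S$ as a cyclic binary word and decompose it into maximal runs of $1$'s (``blocks'') and $0$'s (``gaps''), letting $c_\ell$ be the number of blocks of length $\ell$ and $a,b$ the numbers of gaps of length $2$ and $1$ respectively. Domination forces gaps to have length at most $2$, and super-domination forces each length-$2$ gap to be flanked on both sides by a block of length $\ge 2$, while each length-$1$ gap needs at least one such flanking block. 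Calling an $S$-vertex with exactly one neighbour in $\overline{S}$ a \emph{boundary vertex} (it is the unique private dominator of that neighbour), blocks of length $\ge 2$ contribute two boundary vertices and length-$1$ blocks contribute none. Comparing available boundary vertices against the required $|\overline{S}|=2a+b$ private dominators yields the central inequality
\[
c_1 + c_3 + 2c_4 + 3c_5 + \cdots \ \le\ 2|S| - n,
\]
together with $\sum_\ell c_\ell=a+b$ and $2a+b=n-|S|$.

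Next I would substitute the value $|S|=\gamma_{\esd}(C_n)$ from Proposition~\ref{thm-special} and, in each residue class of $n\bmod 4$, enumerate the integer-admissible multisets $(c_\ell)$. For $n=4k$ the right-hand side is $0$, forcing all blocks and gaps to have length~$2$; the unique cyclic word $(1100)^k$ has period~$4$, yielding~$4$ SD-sets. For $n=4k+1$ the right-hand side is~$1$, giving either $c_1=1$ or $c_3=1$ (with $c_2$ adjusted); each sub-structure has a unique ``anomalous'' block placeable at any of $n$ cyclic positions, yielding $2n$ SD-sets in total. For $n=4k+3$ the right-hand side is again~$1$ but both $c_1=1$ and $c_3=1$ force a non-integer $c_2$, so only the all-length-$2$-block configuration with one length-$1$ gap survives, and placing the lone length-$1$ gap at any of $n$ positions yields $n$ SD-sets. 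For $n=4k+2$ the right-hand side is~$2$ and there are five admissible structures: (I) all length-$2$ blocks; (II) two length-$1$ blocks; (III) two length-$3$ blocks; (IV) one length-$4$ block; (V) one length-$1$ and one length-$3$ block. Using an ordered-feature bijection (count ordered pairs of key features --- e.g.\ ordered pairs of length-$1$ block positions for~(II), ordered pairs of length-$3$ block positions for~(III), ordered pairs (length-$1$ block, length-$3$ block) for~(V) --- then divide by the size of the automorphism group of the feature tuple), the contributions are respectively $\tfrac{nk}{2}$, $\tfrac{n(k-1)}{2}$, $\tfrac{n(k-1)}{2}$, $n$, and $kn$, summing to $\tfrac{5nk}{2}=5k(2k+1)=\tfrac{5}{8}(n^2-2n)$.

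The main technical obstacle is the $n=4k+2$ case. Not only does one have to handle five distinct block-multisets, but Structures~(II) and~(III) admit a ``diametrically opposite'' configuration (the two exceptional blocks separated by $p=q=k/2$, respectively $p=q=(k-2)/2$, ordinary length-$2$ blocks on each arc) that exists only when $k$ is even; in such a configuration the cyclic word has period $n/2$ rather than $n$, so that particular necklace contributes only $n/2$ labeled SD-sets instead of $n$. Tracking these period-halvings --- which occur simultaneously in Structures~(II) and~(III) for even $k$ --- and verifying that the five contributions collapse to the uniform formula $\tfrac{5nk}{2}$ independently of the parity of $k$ is the main bookkeeping step of the proof; for the remaining residues the counting reduces to placing a single anomalous feature at one of $n$ cyclic positions and is essentially automatic.
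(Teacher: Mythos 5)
Your proposal is correct, and it takes a genuinely different route from the paper. The paper's proof first establishes the path formula (Theorem~\ref{thm-path-esp}) and then, inside each residue class, rules out local patterns by deleting a few vertices and invoking the path result inductively; for the hardest case $n=4k+2$ it falls back on the figure-based ``shifting'' enumeration of Figures~\ref{cycle2mod8} and~\ref{cycle6mod8}, split further into $n\equiv 2$ and $n\equiv 6 \pmod 8$, where exactly the two rows of period $n/2$ get shifted only $n/2$ times. You instead work intrinsically with the run-length (block/gap) encoding: your inequality $c_1+c_3+2c_4+\cdots\le 2|S|-n$ is, after the identities $|S|=\sum_\ell \ell c_\ell$, $\sum_\ell c_\ell=a+b$, $2a+b=n-|S|$, exactly the statement that the number of boundary vertices $2\sum_{\ell\ge 2}c_\ell$ is at least $|\overline S|$, which together with the flanking conditions is both necessary and sufficient; substituting $\gamma_{\esd}(C_n)$ from Proposition~\ref{thm-special} then leaves precisely the block multisets you list (the parity of $|S|$ kills $c_1=1$ and $c_3=1$ alone in the $4k+1$ and $4k+3$ cases, and exactly five multisets survive for $4k+2$). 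I checked your five contributions $\tfrac{nk}{2},\ \tfrac{n(k-1)}{2},\ \tfrac{n(k-1)}{2},\ n,\ kn$ for $n=4k+2$ (including the constraint that the two singleton blocks in your structure (II) cannot be adjacent, which your count already encodes), and they sum to $\tfrac{5nk}{2}=\tfrac58 n(n-2)$; the small cases $n=3,4,5,6$ agree. One remark: your ordered-feature bijection (two marked singletons, two marked length-$3$ blocks, etc.) already accounts for the period-$n/2$ configurations automatically, since every labeled set carries exactly two ordered feature pairs regardless of symmetry, so the separate ``period-halving'' bookkeeping you flag as the main obstacle is only needed if you count necklaces and multiply by $n$; either accounting gives your stated numbers. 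What your approach buys is a uniform, figure-free and mod-$8$-free treatment of the $4k+2$ case and independence from the path induction; what the paper's buys is reuse of its Theorem~\ref{thm-path-esp} machinery, at the cost of a sketchier enumeration in the hardest case.
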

\begin{proof}
For $n \ge 3$, let $G$ be a cycle $C_n$ given by $v_1v_2 \ldots v_nv_1$, and let $S$ be a $\gamma_{\esd}$-set of $G$. As before, let $V = V(G)$. We consider four cases.

\medskip
\emph{Case~1: $n=4k$, for some $k \ge 1$.} By Proposition \ref{thm-special}, in this case we have $|S| = 2k = \frac{1}{2}n$. We show that for every three consecutive vertices $v_{i-1}$, $v_i$ and $v_{i+1}$ where addition is taken modulo~$n$, if $v_i \in S$, then at least one of $v_{i-1}$ and $v_{i+1}$ must belong to $S$.  Suppose, to the contrary, that $v_i \in S$ and $\{v_{i-1},v_{i+1}\} \subseteq \overline{S}$. If $n = 4$, then $S$ is not an ESD-set of $G$, a contradiction. Hence, $n \ge 8$. By definition of an ESD-set, we infer that $\{v_{i-3},v_{i-2},v_{i+2},v_{i+3}\} \subseteq S$. If $n = 8$, then $|S| \ge 5 > \frac{1}{2}n$, a contradiction. Hence, $n \ge 12$. Thus, letting $G' = G - \{v_{i-2},v_{i-1},v_i,v_{i+1},v_{i+2}\}$ and $S' = S \setminus \{v_{i-2},v_i,v_{i+2}\}$, we note that $G' = P_{4k-5}$ and $S'$ is an ESD-set of $G'$, implying by Theorem~\ref{thm-path-esp} that $2k-2 = \gamma_{\esd}(G') \le |S'| = |S| - 3$, and so $|S| = 2k+1$, a contradiction. Hence if $v_i \in S$, then $S \cap \{v_{i-1},v_{i+1}\}| \ge 1$.

Suppose next that $\{v_{i-1},v_i,v_{i+1}\} \subseteq S$. If $n = 4$, then $|S| = 3 > \frac{1}{2}n$, a contradiction. Hence, $n \ge 8$. We now let $G' = G - v_i$ and let $S' = S \setminus \{v\}$, we note that $G' = P_{4k-1}$ and $S'$ is an ESD-set of $G'$, implying by Theorem~\ref{thm-path-esp} that $2k = \gamma_{\esd}(G') \le |S'| = |S| - 1$, and so $|S| = 2k+1$, a contradiction Hence, if $v_i \in S$, then $S \cap \{v_{i-1},v_i,v_{i+1}\}| = 2$ for all $i \in [n]$ (where addition is taken modulo~$n$). We infer that the only $\gamma_{\esd}$-set of $G$ in this case when $n = 4k$ are the sets
\[
S_1 = \bigcup_{i=0}^{k-1} \{v_{4i+1},v_{4i+2}\}, \hspace*{0.5cm}
S_2 = \bigcup_{i=0}^{k-1} \{v_{4i+2},v_{4i+3}\},
\]
and
\[
S_3 = \bigcup_{i=0}^{k-1} \{v_{4i+3},v_{4i+4}\}, \hspace*{0.5cm}
S_4 = \bigcup_{i=0}^{k-1} \{v_{4i+1},v_{4i+4}\},
\]
whence $N_{\esd}(G)=4$.

\medskip
\emph{Case~2: $n=4k+1$, for some $k \ge 1$.} By Proposition \ref{thm-special}, in this case we have $|S| = 2k + 1$.We now consider two cases.

Suppose that $S$ contains a vertex with both its neighbors in $\overline{S}$.  These three consecutive vertices can be chosen in $n$ ways. Renaming vertices if necessary, we may assume that $v_2 \in S$ and $\{v_1,v_3\} \subseteq \overline{S}$. In this case, $\{v_4,v_5,v_{4k},v_{4k+1}\} \subseteq {S}$. Suppose that $v_6 \in S$. Letting $G' = G - \{v_3,v_4,v_5\}$ and $S' = S \setminus \{v_4,v_5\}$, we note that $G' = P_{4k-2}$ and $S'$ is an ESD-set of $G'$, implying by Theorem~\ref{thm-path-esp} that $2k = \gamma_{\esd}(G') \le |S'| = |S| - 2$, and so $|S| = 2k+2$, a contradiction. Hence, $v_6 \notin S$.

Suppose next that $v_7 \in S$. If $v_8 \in S$, then letting $G' = G - \{v_5,v_6,v_7\}$ and $S' = S \setminus \{v_5,v_7\}$, we note that $G' = P_{4k-2}$ and $S'$ is an ESD-set of $G'$, implying by Theorem~\ref{thm-path-esp} that $2k = \gamma_{\esd}(G') \le |S'| = |S| - 2$, and so $|S| = 2k+2$, a contradiction. Hence, $v_8 \notin S$. The vertex $v_8$ is end super dominated by the vertex $v_9 \in S$, implying that $v_{10} \in S$. Letting $G' = G - \{v_{4k+1},v_1,v_2,\ldots,v_9\}$ and $S' = S \setminus \{v_{4k+1},v_2,v_4,v_5,v_7,v_9\}$, we note that $G' = P_{4k-9}$ and $S'$ is an ESD-set of $G'$, implying by Theorem~\ref{thm-path-esp} that $2k - 4 = \gamma_{\esd}(G') \le |S'| = |S| - 6$, and so $|S| = 2k+2$, a contradiction. Hence, $v_7 \notin S$, implying that $\{v_8,v_9\} \subset S$. Continuing this process, we have
\[
S = \{v_2,v_4,v_{4k+1}\} \cup \left( \bigcup_{i=1}^{k-1} \{v_{4i+1},v_{4i+4}\} \right).
\]
Thus in this case when $v_2 \in S$ and $\{v_1,v_3\} \subseteq \overline{S}$, the set $S$ is uniquely determined.

Suppose secondly that $S$ contains three consecutive vertices on the cycle. These three consecutive vertices can be chosen in $n$ ways. We may assume that $\{v_1,v_2,v_3\} \subseteq S$. If $v_4 \in S$, then letting $G' = G - \{v_2,v_3\}$ and $S' = S \setminus \{v_2,v_3\}$, we note that $G' = P_{4k-1}$ and $S'$ is an ESD-set of $G'$, implying by Theorem~\ref{thm-path-esp} that $2k = \gamma_{\esd}(G') \le |S'| = |S| - 2$, and so $|S| = 2k+2$, a contradiction. Hence, $v_4 \notin S$. If $v_5 \in S$, then letting $G' = G - \{v_2,v_,v_4\}$ and $S' = S \setminus \{v_2,v_3,v_4\}$, we note that $G' = P_{4k-2}$ and $S'$ is an ESD-set of $G'$, implying by Theorem~\ref{thm-path-esp} that $2k = \gamma_{\esd}(G') \le |S'| = |S| - 2$, and so $|S| = 2k+2$, a contradiction. Hence, $v_5 \notin S$, implying that $\{v_6,v_7\} \subseteq S$. Continuing this process, we have
\[
S = \{v_1,v_{4k+1}\} \cup \left( \bigcup_{i=1}^{k-1} \{v_{4i+2},v_{4i+3}\} \right).
\]
Thus in this case when $\{v_1,v_2,v_3\} \subseteq S$, the set $S$ is uniquely determined. Thus, $N_{\esd}(G) = 2n$.

\medskip
\emph{Case~3: $n=4k+2$, for some $k \ge 1$.}  By Proposition \ref{thm-special},  in this case we have  $|S|=2k+2$. Since choosing any four vertices from $C_6$ gives us an $\gamma_{\esd}$-set of $G$, the formula holds for $n=6$. Hence we may assume that $n \ge 10$. Suppose that $S$ contains five consecutive vertices on the cycle. Renaming vertices if necessary, we may assume that $\{v_1,v_2,v_3,v_4,v_5\} \subseteq S$. Letting $G' = G - \{v_2,v_3,v_4\}$ and $S' = S \setminus \{v_2,v_3,v_4\}$, we note that $G' = P_{4k-1}$ and $S'$ is an ESD-set of $G'$, implying by Theorem~\ref{thm-path-esp} that $2k = \gamma_{\esd}(G') \le |S'| = |S| - 3$, and so $|S| = 2k+3$, a contradiction. Hence, $S$ cannot contain five consecutive vertices on the cycle.
So it remains to consider cases with only one subset of consecutive vertices of $S$ with cardinality~$4$, two subsets of consecutive vertices of $S$ with cardinality~$3$, one subset of consecutive vertices of $S$ with cardinality~$3$ and all subsets of consecutive vertices of $S$ with cardinality less than~$3$. We have two subcases:

\begin{figure}[!h]
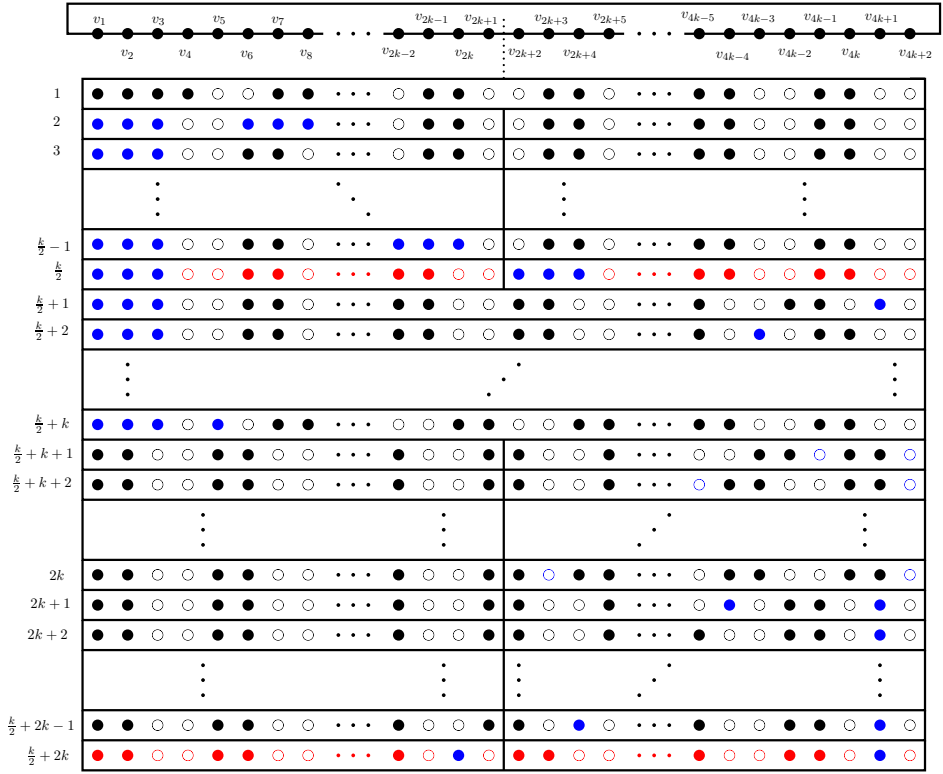

		\begin{center}
			\psscalebox{0.5 0.5}
{

}
		\end{center}
		\caption{Making the ESD-sets of cycle graph of order  $n \equiv 2 ~ (mod ~ 8)$ } \label{cycle2mod8}
	\end{figure}

\medskip
\emph{Case~3.1. $n \equiv 2 \, (\modo \, 8)$.} In this case, $k$ is even. We consider the case when $\{v_1,v_2\} \subseteq S$ as shown in Figure \ref{cycle2mod8}. One can easily check that if we shift the indices, then we have all cases of $\gamma_{\esd}$-set of $G$ and we have no more cases (consider movements of blue vertices in Figure~\ref{cycle2mod8}, where filled ones are in $S$ and empty ones are in $\overline{S}$).  All cases can be shifted $4k+2$ times except the rows $\frac{k}{2}$ and $\frac{k}{2}+2k$ which can be shifted $2k+1$ times. So in general we have $\frac{5k}{2}-2$ cases which can be shifted $n$ times  and two cases which can be shifted $\frac{n}{2}$ times. We note that other cases when $\{v_1,v_2\}\subseteq S$, $v_1\in S$ and $v_2\in \overline{S}$ and $\{v_1,v_2\}\subseteq \overline{S}$ can be found in the shifted cases.

\medskip
\emph{Case~3.2. $n \equiv 6 \, (\modo \, 8)$.} In this case, $k$ is odd. We consider the case when $\{v_1,v_2\} \subseteq S$ as shown in Figure \ref{cycle6mod8}. By similar arguments as in Case~3.1, in general we have $2k+\frac{k-1}{2}=\frac{5k-1}{2}$ cases which can be shifted $n$ times and one case which can be shifted $\frac{n}{2}$ times.

Hence, in both cases we have $N_{\esd}(C_{4k+2})=\frac{5}{8}n(n-2)$.

\begin{figure}[!h]
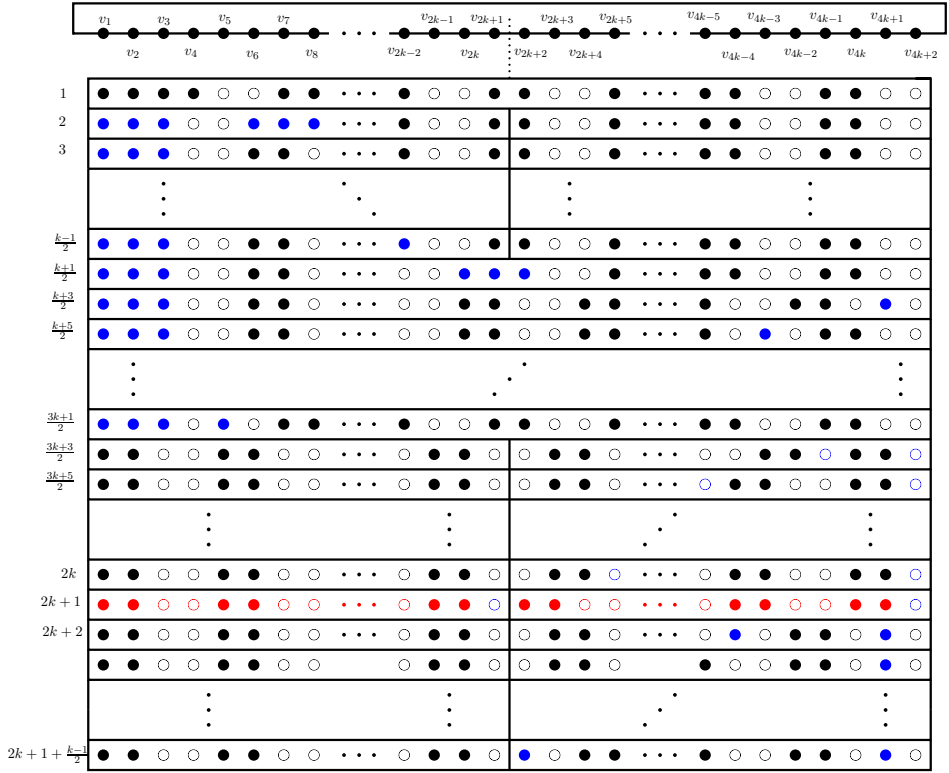

		\begin{center}
			\psscalebox{0.5 0.5}
{

}
		\end{center}
		\caption{Making the ESD-sets of cycle graph of order  $n \equiv 6 ~ (mod ~ 8)$ } \label{cycle6mod8}
	\end{figure}

\medskip
\emph{Case~4: $n=4k+3$, for some $k \ge 0$.} We observe that $N_{\esd}(C_{3}) = 3$, and so the desired result is immediate for $n=3$. Hence we may assume that $n \ge 7$. By Proposition~\ref{thm-special},  in this case we have  $|S|=2k+2$. So we need to choose $2k+2$ vertices in such a way as to yield a $\gamma_{\esd}$-set of $G$.

Suppose that $S$ contains three consecutive vertices on the cycle. Renaming vertices if necessary, we may assume that $\{v_1,v_2,v_3\} \subseteq S$. Letting $G' = G - v_2$ and $S' = S \setminus \{v_2\}$, we note that $G' = P_{4k+2}$ and $S'$ is an ESD-set of $G'$, implying by Theorem~\ref{thm-path-esp} that $2k + 2 = \gamma_{\esd}(G') \le |S'| = |S| - 1$, and so $|S| = 2k+3$, a contradiction. Hence, $S$ cannot contain three consecutive vertices on the cycle.

Suppose that $S$ contains a vertex with both its neighbors in $\overline{S}$. Renaming vertices if necessary, we may assume that $v_5 \in S$ and $\{v_4,v_6\} \subseteq \overline{S}$. In this case, $\{v_2,v_3,v_7,v_8\} \subseteq S$. Suppose that $v_9 \in S$. Letting $G' = G - \{v_3,v_4,\ldots,v_8\}$ and $S' = S \setminus \{v_3,v_5,v_7,v_8\}$, we note that $G' = P_{4k-3}$ and $S'$ is an ESD-set of $G'$, implying by Theorem~\ref{thm-path-esp} that $2k-1 = \gamma_{\esd}(G') \le |S'| = |S| - 4$, and so $|S| = 2k+3$, a contradiction. Hence, $v_9 \notin S$. By symmetry, $v_1 \notin S$. We now let $G'$ be obtained from $G - \{v_1,v_2,\ldots,v_9\}$ by adding the edge $v_{10}v_{4k+3}$. Further we let $S' = S \setminus \{v_2,v_3,v_5,v_7,v_8\}$. We note that $G' = C_{4k-6}$ and $S'$ is an ESD-set of $G'$, implying by Proposition \ref{thm-special} that $2k-2= \gamma_{\esd}(G') \le |S'| = |S| - 5$, and so $|S| = 2k+3$, a contradiction. Hence, $S$ does not contain a vertex with both its neighbors in $\overline{S}$, that is, $G[S]$ does not contain an isolated vertex.

By our earlier properties, we infer that every component of $G[S]$ is a $P_2$-component. Furthermore since $|S|=2k+2$, the subgraph $G[S]$ contains $k+1$ components. Therefore, the subgraph $G[\overline{S}]$ consists of $k+1$ components, where $k$ components are isomorphic to $P_2$ and one component isomorphic to $P_1$, that is, $G[\overline{S}] = P_1 \cup kP_2$. Since the isolated vertex in $G[\overline{S}]$ can be chosen in $n$ ways, this implies that $N_{\esd}(C_{4k+3})=n$.~\qed
\end{proof}

We close this section by counting the number, $N_{\esd}(P_n)$, of $\gamma_{\esd}$-sets in a path $P_n$ of order~$n$.

\begin{theorem}
\label{path-counting}
If $P_n$ is the path graph of order $n \ge 2$,  then
\begin{displaymath}
N_{\esd}(P_n)= \left\{ \begin{array}{ll}
1 & \textrm{if $n =4k$, } \1 \\
2k+1 & \textrm{if $n =4k+1$,} \1 \\
\frac{5k^2+5k+2}{2} & \textrm{if $n =4k+2$,} \1 \\
k+1 & \textrm{if $n =4k+3$.}
\end{array} \right.
\end{displaymath}
\end{theorem}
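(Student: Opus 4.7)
The plan is to induct on $n$, mirroring the proof of Theorem~\ref{thm-path-esp}. I will verify the base cases $n \in \{2,3,\ldots,7\}$ by a direct enumeration of subsets containing both leaves of $P_n$ of the prescribed cardinality; this is routine since each case involves only a handful of candidates, and the formulas agree with what one checks by hand ($N_{\esd}(P_2) = N_{\esd}(P_3) = N_{\esd}(P_4) = 1$, $N_{\esd}(P_5) = 3$, $N_{\esd}(P_6) = 6$, $N_{\esd}(P_7) = 2$). For the inductive step with $n \ge 8$, the approach is a case analysis on the initial pattern of any $\gamma_{\esd}$-set $S$ of $P_n = v_1v_2\cdots v_n$, knowing that $v_1 \in S$ from Remark~\ref{thm-mix} and that every vertex of $\overline{S}$ needs a private witness in $S$.

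Short forcing arguments (using that a vertex of $S$ with two neighbors in $\overline{S}$ cannot witness either) show that $S$ begins with exactly one of the four initial patterns
\[
\text{(A) } 1 1\ldots,\qquad \text{(B) } 1 0 1 1\ldots,\qquad \text{(C) } 1 0 0 1 1\ldots,\qquad \text{(D) } 1 0 1 0 1 1\ldots.
\]
In each case I will delete the indicated prefix and check that the restriction $S'$ of $S$ to the surviving vertices is an ESD-set of the induced subpath --- $P_{n-1}$ in case (A) with $|S'|=|S|-1$, and $P_{n-3}, P_{n-4}, P_{n-4}$ respectively in (B), (C), (D) with $|S'|=|S|-2$. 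Comparing the values of $\gamma_{\esd}$ supplied by Theorem~\ref{thm-path-esp} in each residue class of $n \pmod 4$ determines which reductions land on a \emph{$\gamma_{\esd}$-set} of the smaller path (enumerated by the inductive hypothesis) and which are infeasible. Writing $N_A(n), N_B(n), N_C(n), N_D(n)$ for the four contributions, one obtains
\[
N_{\esd}(P_n) = N_A(n) + N_B(n) + N_C(n) + N_D(n),
\]
where $N_A(n) = N_{\esd}(P_{n-1})$, $N_B(n) = N_{\esd}(P_{n-3})$, $N_C(n) = N_{\esd}(P_{n-4})$ when feasible, and $N_D(n) = N_A(n-4)$, the latter because the pattern $1 0 1 0 1 1$ reduces to a $\gamma_{\esd}$-set of $P_{n-4}$ whose leaf's neighbor also lies in the set.

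Applying the inductive hypothesis residue by residue then produces the four formulas: for $n=4k$ only (C) contributes, giving $N_{\esd}(P_{4(k-1)})=1$; for $n=4k+3$ cases (B) and (C) contribute, giving $1+k=k+1$; for $n=4k+1$ cases (A), (C), (D) contribute, giving $1+(2k-1)+1=2k+1$; and for $n=4k+2$ all four contribute, with the sum
\[
(2k+1) + k + \tfrac{5(k-1)^2+5(k-1)+2}{2} + (2k-1)
\]
simplifying to $\tfrac{5k^2+5k+2}{2}$, as required.

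The main obstacle is Case~(D). One must argue carefully that both $v_5$ and $v_6$ lie in $S$: since $v_3 \in S$ but has the two $\overline{S}$-neighbors $v_2$ and $v_4$, it cannot witness $v_4$, so $v_5 \in S$ must; and this in turn forces $v_6 \in S$, for otherwise $v_5$ would have two $\overline{S}$-neighbors and could not witness $v_4$ either. Establishing a faithful bijection between Case-(D) $\gamma_{\esd}$-sets of $P_n$ and Case-(A) $\gamma_{\esd}$-sets of $P_{n-4}$, verifying that the witnessing relations across the cut transfer cleanly, and tracking $N_A(n-4)$ through the recursion (so that the ``secondary'' auxiliary count is determined simultaneously with the primary one) is the delicate step that knits the whole induction together.
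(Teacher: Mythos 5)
Your proposal is correct, and it takes a genuinely different route from the paper. The paper derives the path counts largely from its cycle enumeration (Theorem~\ref{Cycle-counting}): for $n=4k$ and $n=4k+2$ it deletes the edge $v_1v_{4k}$ (resp.\ $v_1v_{4k+2}$) and counts which of the cyclic $\gamma_{\esd}$-sets survive, the $4k+2$ case being handled by the row-and-shift bookkeeping of Figures~\ref{cycle2mod8} and~\ref{cycle6mod8}; only the residues $1$ and $3 \pmod 4$ are done by an induction on the last block of four vertices. Your argument never touches cycles: the prefix classification $11$, $1011$, $10011$, $101011$ is exhaustive by exactly the forcing you describe, each deletion does preserve the ESD property (the deleted $S$-vertices $v_1$, $v_3$ can only witness deleted $\overline{S}$-vertices), the reverse extensions are minimum precisely in the residues where $\gamma_{\esd}$ drops by the right amount (Theorem~\ref{thm-path-esp}), and the auxiliary count satisfies $N_A(m)=N_{\esd}(P_{m-1})$ for $m\equiv 1,2\pmod 4$ and $N_A(m)=0$ for $m\equiv 0,3\pmod 4$, which closes the recursion for case (D). I checked the residue-by-residue bookkeeping: your recursion reproduces the stated formulas (e.g.\ for $n=4k+2$ the contributions $2k+1$, $k$, $\tfrac{5(k-1)^2+5(k-1)+2}{2}$, $2k-1$ do sum to $\tfrac{5k^2+5k+2}{2}$), it agrees with direct enumeration for $n\le 10$, and your base values, including $N_{\esd}(P_6)=6$ and $N_{\esd}(P_7)=2$, are right; in the residue $1\pmod 4$ your identity $N_A+N_C+N_D=N_{\esd}(P_{n-4})+2$ recovers the paper's recursion $N_{\esd}(P_{4i+5})=N_{\esd}(P_{4i+1})+2$. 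What your approach buys is a uniform, self-contained treatment of all four residues with explicit bijections in place of figure-based case inspection (a real gain for the $4k+2$ case); what the paper's approach buys is that the $n=4k$ case becomes an immediate corollary of the cycle result. When writing it up, do spell out the reverse (extension) direction of each bijection, since that is where minimality of the extended set must be checked against Theorem~\ref{thm-path-esp}.
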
	
\begin{proof}
For $n \ge 2$, let $G$ be a cycle $P_n$ given by $v_1v_2 \ldots v_n$, and let $S$ be a $\gamma_{\esd}$-set of $G$. As before, let $V = V(G)$. We consider four cases.

\medskip
\emph{Case~1: $n=4k$, for some $k \ge 1$.}  By Proposition~\ref{thm-special} and Theorem~\ref{thm-path-esp}, we have $\gamma_{\esd}(P_{4k}) = \gamma_{\esd}(C_{4k})$. In Case~1 of  Theorem~\ref{Cycle-counting}, we found all $\gamma_{\esd}$-set of $C_{4k}$. If we remove the edge $v_1v_{4k}$, then only $S=S_4$ is an ESD-set for $P_{4k}$ and we have $N_{\esd}(P_{4k})=1$.

\medskip
\emph{Case~2: $n=4k+1$, for some $k \ge 1$.} We proceed by induction on $k$. If $k=1$, then by Theorem~\ref{thm-path-esp}, we have $\gamma_{\esd}(P_{5})=3$. We have three $\gamma_{\esd}$-set for $P_5$, namely $S_1=\{v_1,v_2,v_5\}$, $S_2=\{v_1,v_3,v_5\}$ and $S_3=\{v_1,v_4,v_5\}$. Hence, $N_{\esd}(P_{5})=3$, as desired. This establishes the base case. Suppose that for $k=i$, we have $N_{\esd}(P_{4i+1})=2i+1$. We now count the number of $\gamma_{\esd}$-set for $P_{4(i+1)+1}$. Since $\gamma_{\esd}(P_{4i+5})=2i+3$, we need two more vertices in $S$ to produce a $\gamma_{\esd}$-set of $P_{4(i+1)+1}$. Clearly, in the last four vertices we should have more than one vertex in $S$, so these two vertices should be picked from $v_{4i+2}$, $v_{4i+3}$, $v_{4i+4}$ and $v_{4i+5}$. In all cases, $v_{4i+5} \in S$. Now, consider Figure~\ref{path4i+1-induction}, where the black vertices belong to $S$. In both rows (I) and (II) in Figure~\ref{path4i+1-induction}, we can only add $v_{4i+2}$ and $v_{4i+5}$ to $S$, but in row (III), we have three different choices for $S$, namely
\[
\begin{array}{lcl}
S_1 & = & S\cup\{v_{4i+2},v_{4i+5}\}, \1 \\
S_2 & = & S\cup\{v_{4i+3},v_{4i+5}\}, \1 \\
S_3 & = & S\cup\{v_{4i+4},v_{4i+5}\}.
\end{array}
\]

So, in total, we have $N_{\esd}(P_{4i+5})=N_{\esd}(P_{4i+1})+2$, and the induction is completed. One can easily check that there is no possibility that we have $v_{4i+1} \in \overline{S}$ and then find a $\gamma_{\esd}$-set for $P_{4i+5}$.

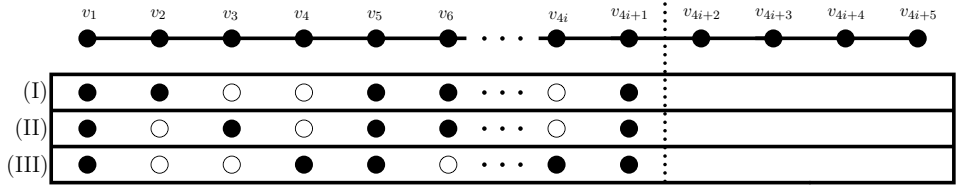
\begin{figure}
		\begin{center}
			\psscalebox{0.6 0.6}
{
\begin{pspicture}(0,-4.8)(21.21,-0.7)
\psdots[linecolor=black, dotsize=0.4](1.96,-1.55)
\psdots[linecolor=black, dotsize=0.4](3.56,-1.55)
\psdots[linecolor=black, dotsize=0.4](5.16,-1.55)
\psdots[linecolor=black, dotsize=0.4](6.76,-1.55)
\psdots[linecolor=black, dotsize=0.4](17.16,-1.55)
\psdots[linecolor=black, dotsize=0.4](18.76,-1.55)
\psdots[linecolor=black, dotsize=0.4](20.36,-1.55)
\rput[bl](1.84,-1.13){$v_1$}
\rput[bl](3.36,-1.13){$v_2$}
\rput[bl](4.96,-1.15){$v_3$}
\rput[bl](6.54,-1.15){$v_4$}
\psdots[linecolor=black, dotsize=0.1](10.76,-1.55)
\psdots[linecolor=black, dotsize=0.1](11.16,-1.55)
\psdots[linecolor=black, dotsize=0.1](11.56,-1.55)
\psline[linecolor=black, linewidth=0.08](20.36,-1.55)(16.76,-1.55)(16.76,-1.55)
\psline[linecolor=black, linewidth=0.08](5.16,-1.55)(1.96,-1.55)(1.96,-1.55)
\rput[bl](8.16,-1.15){$v_5$}
\rput[bl](9.74,-1.15){$v_6$}
\rput[bl](0.36,-3.01){\begin{Large}
(I)
\end{Large}}
\rput[bl](0.18,-3.83){\begin{Large}
(II)
\end{Large}}
\rput[bl](0.0,-4.65){\begin{Large}
(III)
\end{Large}}
\psline[linecolor=black, linewidth=0.08](17.96,-4.75)(17.96,-4.75)
\rput[bl](19.96,-1.15){$v_{4i+5}$}
\rput[bl](18.36,-1.15){$v_{4i+4}$}
\rput[bl](16.76,-1.15){$v_{4i+3}$}
\psline[linecolor=black, linewidth=0.08](10.36,-1.55)(5.16,-1.55)(5.16,-1.55)
\psdots[linecolor=black, dotsize=0.4](8.36,-1.55)
\psdots[linecolor=black, dotsize=0.4](9.96,-1.55)
\psdots[linecolor=black, dotsize=0.4](15.56,-1.55)
\psdots[linecolor=black, dotsize=0.4](13.96,-1.55)
\psline[linecolor=black, linewidth=0.08](13.56,-1.55)(17.16,-1.55)(17.16,-1.55)
\rput[bl](15.16,-1.15){$v_{4i+2}$}
\rput[bl](13.56,-1.15){$v_{4i+1}$}
\psdots[linecolor=black, dotsize=0.4](12.36,-1.55)
\psline[linecolor=black, linewidth=0.08](11.96,-1.55)(13.96,-1.55)
\psline[linecolor=black, linewidth=0.08](13.96,-1.55)(13.96,-1.55)
\rput[bl](12.18,-1.19){$v_{4i}$}
\psline[linecolor=black, linewidth=0.08](1.56,-2.35)(21.16,-2.35)(21.16,-4.75)(1.16,-4.75)(1.16,-2.35)(1.56,-2.35)
\psline[linecolor=black, linewidth=0.08](1.16,-3.15)(21.16,-3.15)(21.16,-3.15)
\psline[linecolor=black, linewidth=0.08](1.16,-3.95)(21.16,-3.95)(21.16,-3.95)
\psline[linecolor=black, linewidth=0.08, linestyle=dotted, dotsep=0.10583334cm](14.76,-0.75)(14.76,-4.75)
\psdots[linecolor=black, dotsize=0.4](1.96,-2.75)
\psdots[linecolor=black, dotsize=0.4](1.96,-3.55)
\psdots[linecolor=black, dotsize=0.4](1.96,-4.35)
\psdots[linecolor=black, dotsize=0.4](13.96,-2.75)
\psdots[linecolor=black, dotsize=0.4](13.96,-3.55)
\psdots[linecolor=black, dotsize=0.4](13.96,-4.35)
\psdots[linecolor=black, dotsize=0.4](3.56,-2.75)
\psdots[linecolor=black, dotsize=0.4](5.16,-3.55)
\psdots[linecolor=black, dotsize=0.4](6.76,-4.35)
\psdots[linecolor=black, dotsize=0.4](8.36,-4.35)
\psdots[linecolor=black, dotsize=0.4](8.36,-2.75)
\psdots[linecolor=black, dotsize=0.4](9.96,-2.75)
\psdots[linecolor=black, dotsize=0.4](8.36,-3.55)
\psdots[linecolor=black, dotsize=0.4](9.96,-3.55)
\psdots[linecolor=black, dotstyle=o, dotsize=0.4, fillcolor=white](5.16,-2.75)
\psdots[linecolor=black, dotstyle=o, dotsize=0.4, fillcolor=white](6.76,-2.75)
\psdots[linecolor=black, dotstyle=o, dotsize=0.4, fillcolor=white](3.56,-3.55)
\psdots[linecolor=black, dotstyle=o, dotsize=0.4, fillcolor=white](6.76,-3.55)
\psdots[linecolor=black, dotstyle=o, dotsize=0.4, fillcolor=white](3.56,-4.35)
\psdots[linecolor=black, dotstyle=o, dotsize=0.4, fillcolor=white](5.16,-4.35)
\psdots[linecolor=black, dotstyle=o, dotsize=0.4, fillcolor=white](9.96,-4.35)
\psdots[linecolor=black, dotstyle=o, dotsize=0.4, fillcolor=white](12.36,-2.75)
\psdots[linecolor=black, dotstyle=o, dotsize=0.4, fillcolor=white](12.36,-3.55)
\psdots[linecolor=black, dotsize=0.4](12.36,-4.35)
\psdots[linecolor=black, dotsize=0.1](10.76,-2.75)
\psdots[linecolor=black, dotsize=0.1](11.16,-2.75)
\psdots[linecolor=black, dotsize=0.1](11.56,-2.75)
\psdots[linecolor=black, dotsize=0.1](10.76,-3.55)
\psdots[linecolor=black, dotsize=0.1](11.16,-3.55)
\psdots[linecolor=black, dotsize=0.1](11.56,-3.55)
\psdots[linecolor=black, dotsize=0.1](10.76,-4.35)
\psdots[linecolor=black, dotsize=0.1](11.16,-4.35)
\psdots[linecolor=black, dotsize=0.1](11.56,-4.35)
\end{pspicture}
}
		\end{center}
		\caption{Making the ESD-sets of path graph of order $4k+1$} \label{path4i+1-induction}
	\end{figure}

\medskip
\emph{Case~3: $n=4k+2$, for some $k \ge 0$.} By Proposition \ref{thm-special} and Theorem \ref{thm-path-esp}, we have $\gamma_{\esd}(P_{4k+2})=\gamma_{\esd}(C_{4k+2})$. We now proceed in a similar way as in Case~1. We have two subcases:

\medskip
\emph{Case~3.1: $k$ is odd.} Here we consider Figure~\ref{cycle6mod8} and find all $\gamma_{\esd}$-sets of this cycle. Removing the edge $v_1v_{4k+2}$ yields a path. First, we consider row 1. In this row, we have a set of four consecutive vertices in $S$, and $k-1$ sets of two consecutive vertices in $S$, which in total gives us $4k+2$ sets after shifting vertices, as we did in proof of Theorem~\ref{Cycle-counting}. Among these sets, we have three sets with four consecutive vertices in $S$, and these sets contain both $v_1$ and $v_{4k+2}$, and $k-1$ sets of two consecutive vertices. So, in total for row 1, we have $k+2$ $\gamma_{\esd}$-set for $P_{4k+2}$. In the next step, we do the same for all rows in Figure \ref{cycle6mod8}. So, in general, we have:
\begin{itemize}
\item[(a)]
One row (row 1) with a set of four consecutive vertices and $k-1$ sets of two consecutive vertices in $S$, which yields $k+2$ $\gamma_{\esd}$-sets.
\item[(b)]
$\frac{k-1}{2}$ rows (rows 2 to $\frac{k+1}{2}$) with two sets of three consecutive vertices and $k-2$ sets of two consecutive vertices in $S$, which in each row, yields $k+2$ $\gamma_{\esd}$-sets.
\item[(c)]
$k$ rows (rows $\frac{k+3}{2}$ to $\frac{3k+1}{2}$) with a set of three consecutive vertices, a set with single vertex and $k-1$ sets of two consecutive vertices in $S$, which in each row, yields $k+1$ $\gamma_{\esd}$-sets.
\item[(d)]
$\frac{k-1}{2}$ rows (rows $\frac{3k+3}{2}$ to $2k$) with  $k+1$ sets of two consecutive vertices in $S$, which  in each row, yields $k+1$ $\gamma_{\esd}$-sets.
\item[(e)]
One row (row $2k+1$) with $k+1$ sets of two consecutive vertices in $S$, which yields $\frac{k+1}{2}$ $\gamma_{\esd}$-sets. (Note that in this row which is separated by red color, half of the cases occur twice.)
\item[(f)]
$\frac{k-1}{2}$ rows (rows $2k+2$ to $2k+1+\frac{k-1}{2}$) with two sets of single vertex and $k$ sets of two consecutive vertices in $S$, which  in each row, yields $k$ $\gamma_{\esd}$-sets.
\end{itemize}
By summing all these cases, we have $\frac{5k^2+5k+2}{2}$ $\gamma_{\esd}$-sets for $P_{4k+2}$.

\medskip
\emph{Case~3.2: $k$ is even.}  By considering Figure \ref{cycle2mod8}, analogous argument as in Case~3.1 yield the desired result.

\medskip
\emph{Case~3: $n=4k+32$, for some $k \ge 0$.} using analogous arguments as in Case~2 and using Figure \ref{path4i+3-induction}, we infer that there is only one more set which is added to row (I). This completes the proof.~\qed
\end{proof}

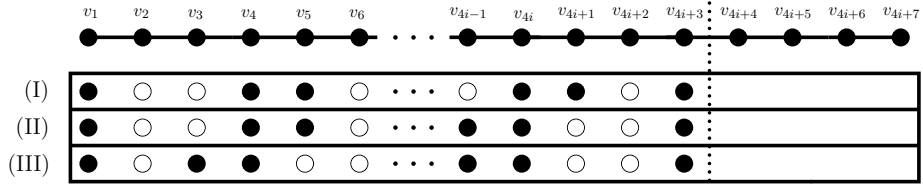
\begin{figure}
		\begin{center}
			\psscalebox{0.6 0.6}
{
\begin{pspicture}(0,-4.8)(20.41,-0.7)
\psdots[linecolor=black, dotsize=0.4](1.96,-1.55)
\psdots[linecolor=black, dotsize=0.4](3.16,-1.55)
\psdots[linecolor=black, dotsize=0.4](4.36,-1.55)
\psdots[linecolor=black, dotsize=0.4](5.56,-1.55)
\psdots[linecolor=black, dotsize=0.4](13.96,-1.55)
\psdots[linecolor=black, dotsize=0.4](15.16,-1.55)
\psdots[linecolor=black, dotsize=0.4](16.36,-1.55)
\rput[bl](1.84,-1.13){$v_1$}
\rput[bl](2.96,-1.13){$v_2$}
\rput[bl](4.16,-1.15){$v_3$}
\rput[bl](5.34,-1.15){$v_4$}
\psdots[linecolor=black, dotsize=0.1](8.76,-1.55)
\psdots[linecolor=black, dotsize=0.1](9.16,-1.55)
\psdots[linecolor=black, dotsize=0.1](9.56,-1.55)
\psline[linecolor=black, linewidth=0.08](18.36,-1.55)(14.76,-1.55)(14.76,-1.55)
\psline[linecolor=black, linewidth=0.08](5.16,-1.55)(1.96,-1.55)(1.96,-1.55)
\rput[bl](6.56,-1.15){$v_5$}
\rput[bl](7.74,-1.15){$v_6$}
\rput[bl](0.36,-3.01){\begin{Large}
(I)
\end{Large}}
\rput[bl](0.18,-3.83){\begin{Large}
(II)
\end{Large}}
\rput[bl](0.0,-4.65){\begin{Large}
(III)
\end{Large}}
\psline[linecolor=black, linewidth=0.08](17.96,-4.75)(17.96,-4.75)
\rput[bl](17.16,-1.15){$v_{4i+5}$}
\rput[bl](15.96,-1.15){$v_{4i+4}$}
\rput[bl](14.76,-1.15){$v_{4i+3}$}
\psdots[linecolor=black, dotsize=0.4](6.76,-1.55)
\psdots[linecolor=black, dotsize=0.4](7.96,-1.55)
\psdots[linecolor=black, dotsize=0.4](12.76,-1.55)
\psdots[linecolor=black, dotsize=0.4](11.56,-1.55)
\psline[linecolor=black, linewidth=0.08](11.56,-1.55)(15.16,-1.55)(15.16,-1.55)
\rput[bl](13.56,-1.15){$v_{4i+2}$}
\rput[bl](12.36,-1.15){$v_{4i+1}$}
\psdots[linecolor=black, dotsize=0.4](10.36,-1.55)
\psline[linecolor=black, linewidth=0.08](9.96,-1.55)(11.96,-1.55)
\psline[linecolor=black, linewidth=0.08](13.96,-1.55)(13.96,-1.55)
\rput[bl](11.38,-1.19){$v_{4i}$}
\psline[linecolor=black, linewidth=0.08, linestyle=dotted, dotsep=0.10583334cm](15.72,-0.75)(15.72,-4.75)
\psdots[linecolor=black, dotsize=0.4](1.96,-2.75)
\psdots[linecolor=black, dotsize=0.4](1.96,-3.55)
\psdots[linecolor=black, dotsize=0.4](1.96,-4.35)
\psdots[linecolor=black, dotsize=0.4](15.16,-2.75)
\psdots[linecolor=black, dotsize=0.4](15.16,-3.55)
\psdots[linecolor=black, dotsize=0.4](15.16,-4.35)
\psdots[linecolor=black, dotstyle=o, dotsize=0.4, fillcolor=white](3.16,-2.75)
\psdots[linecolor=black, dotsize=0.4](5.56,-3.55)
\psdots[linecolor=black, dotsize=0.4](4.36,-4.35)
\psdots[linecolor=black, dotsize=0.4](5.56,-4.35)
\psdots[linecolor=black, dotsize=0.4](6.76,-2.75)
\psdots[linecolor=black, dotsize=0.4](6.76,-3.55)
\psdots[linecolor=black, dotsize=0.4](11.56,-2.75)
\psdots[linecolor=black, dotstyle=o, dotsize=0.4, fillcolor=white](4.36,-2.75)
\psdots[linecolor=black, dotsize=0.4](5.56,-2.75)
\psdots[linecolor=black, dotstyle=o, dotsize=0.4, fillcolor=white](3.16,-3.55)
\psdots[linecolor=black, dotstyle=o, dotsize=0.4, fillcolor=white](4.36,-3.55)
\psdots[linecolor=black, dotstyle=o, dotsize=0.4, fillcolor=white](3.16,-4.35)
\psdots[linecolor=black, dotstyle=o, dotsize=0.4, fillcolor=white](6.76,-4.35)
\psdots[linecolor=black, dotstyle=o, dotsize=0.4, fillcolor=white](7.96,-4.35)
\psdots[linecolor=black, dotstyle=o, dotsize=0.4, fillcolor=white](7.96,-2.75)
\psdots[linecolor=black, dotstyle=o, dotsize=0.4, fillcolor=white](7.96,-3.55)
\psdots[linecolor=black, dotsize=0.4](10.36,-3.55)
\psdots[linecolor=black, dotsize=0.1](8.76,-2.75)
\psdots[linecolor=black, dotsize=0.1](9.16,-2.75)
\psdots[linecolor=black, dotsize=0.1](9.56,-2.75)
\psdots[linecolor=black, dotsize=0.1](8.76,-3.55)
\psdots[linecolor=black, dotsize=0.1](9.16,-3.55)
\psdots[linecolor=black, dotsize=0.1](9.56,-3.55)
\psdots[linecolor=black, dotsize=0.1](8.76,-4.35)
\psdots[linecolor=black, dotsize=0.1](9.16,-4.35)
\psdots[linecolor=black, dotsize=0.1](9.56,-4.35)
\psline[linecolor=black, linewidth=0.08](8.36,-1.55)(5.16,-1.55)(5.16,-1.55)
\psdots[linecolor=black, dotsize=0.4](12.76,-2.75)
\psdots[linecolor=black, dotstyle=o, dotsize=0.4, fillcolor=white](13.96,-2.75)
\psdots[linecolor=black, dotstyle=o, dotsize=0.4, fillcolor=white](10.36,-2.75)
\psdots[linecolor=black, dotstyle=o, dotsize=0.4, fillcolor=white](13.96,-3.55)
\psdots[linecolor=black, dotstyle=o, dotsize=0.4, fillcolor=white](12.76,-3.55)
\psdots[linecolor=black, dotstyle=o, dotsize=0.4, fillcolor=white](13.96,-4.35)
\psdots[linecolor=black, dotstyle=o, dotsize=0.4, fillcolor=white](12.76,-4.35)
\psdots[linecolor=black, dotsize=0.4](11.56,-3.55)
\psdots[linecolor=black, dotsize=0.4](11.56,-4.35)
\psdots[linecolor=black, dotsize=0.4](10.36,-4.35)
\psdots[linecolor=black, dotsize=0.4](17.56,-1.55)
\psdots[linecolor=black, dotsize=0.4](18.76,-1.55)
\psdots[linecolor=black, dotsize=0.4](19.96,-1.55)
\psline[linecolor=black, linewidth=0.08](1.96,-2.35)(20.36,-2.35)(20.36,-4.75)(1.56,-4.75)(1.56,-2.35)(1.96,-2.35)
\psline[linecolor=black, linewidth=0.08](1.56,-3.15)(20.36,-3.15)(20.36,-3.15)
\psline[linecolor=black, linewidth=0.08](1.56,-3.95)(20.36,-3.95)(20.36,-3.95)
\rput[bl](9.96,-1.15){$v_{4i-1}$}
\psline[linecolor=black, linewidth=0.08](18.36,-1.55)(19.96,-1.55)(19.96,-1.55)
\rput[bl](18.36,-1.15){$v_{4i+6}$}
\rput[bl](19.56,-1.15){$v_{4i+7}$}
\end{pspicture}
}
		\end{center}
		\caption{Making the ESD-sets of path graph of order $4k+3$} \label{path4i+3-induction}
	\end{figure}

\section{Acknowledgements}

The second author would like to thank the Research Council of Norway and Department of Informatics, University of Bergen for their support.

\medskip

\end{document}